\newcommand{\R}{\mathbb{R}}
\newcommand{\N}{\mathbb{N}}
\newcommand{\Z}{\mathbb{Z}}
\newcommand{\Q}{\mathbb{Q}}
\newtheorem{theorem}{Theorem}
\newtheorem{corollary}[theorem]{Corollary}
\newtheorem{fact}{Theorem}[section]
\newtheorem{cor}[fact]{Corollary}
\newtheorem{lemma}[fact]{Lemma}
\newtheorem{obs}[fact]{Observation}
\newtheorem{ex}[fact]{Example}
\newtheorem{defn}[fact]{Definition}
\title{Bounds for Equilibrium States on Amenable Group Subshifts}
\date{26 Sep 2024}
\begin{document}

\maketitle
\begin{center}
\author{C. Evans Hedges}

\end{center}


\begin{abstract} We prove a result on equilibrium measures for potentials with summable variation on arbitrary subshifts over a countable amenable group. For finite configurations $v$ and $w$, if $v$ is always replaceable by $w$, we obtain a bound on the measure of $v$ depending on the measure of $w$ and a cocycle induced by the potential.

We then use this result to show that under this replaceability condition, we can obtain bounds on the Lebesgue-Radon-Nikodym derivative $d (\mu_\phi \circ \xi ) / d\mu_\phi $ for certain holonomies $\xi$ that generate the homoclinic (Gibbs) relation. 

As corollaries, we obtain extensions of results by Meyerovitch and Garcia-Ramos and Pavlov to the countable amenable group subshift setting. Our methods rely on the exact tiling result for countable amenable groups by Downarowicz, Huczek, and Zhang and an adapted proof technique from Garcia-Ramos and Pavlov.

\end{abstract}

\section{Introduction}

This paper is concerned with equilibrium states on subshifts over a countable amenable group. In particular, for an arbitrary subshift, given an equilibrium state $\mu$ for a potential with summable variation, we prove Gibbs-like bounds on the measures of finite configurations under a replicability condition. We use this result to prove a novel conformal Gibbs-like bound on the Lebesgue-Radon-Nikodym (LRN) derivative  $d ( \mu \circ \xi ) / d\mu$ for a certain class of Borel isomorphisms $\xi$.  Our results generalize results of Meyerovitch \cite{meyerovitch} and Garcia-Ramos and Pavlov \cite{GRP} and correct an error in the latter paper.

Let $X \subset \mathcal{A}^G$ be a subshift (i.e. a closed and shift-invariant subset) over a countable amenable group $G$ with a finite alphabet $\mathcal{A}$. The dynamics of the system will be induced by the left-translation map $\{ \sigma_g \}_{g \in G}$ where $X$ is required to be compact and $\sigma$-invariant. A potential is a continuous, real valued function $\phi: X \rightarrow \R$. Equilibrium states are $\sigma$-invariant Borel probability measures maximizing the pressure of $\mu$ with respect to $\phi$: $P_\phi(\mu) = h(\mu) + \int \phi d\mu$, where $h(\mu)$ is the classical Kolmogorov-Sinai entropy of $\mu$. Although we make no further assumptions on our subshift, we will require that the potential under consideration have summable variation, which will be defined in Section 3.2. 

There is an extensive history in statistical physics and dynamical systems relating the global property of being an equilibrium state to local properties depending on the potential. Thermodynamic Formalism, at its core, concerns itself with relating these global and local phenomena. Foundational results in this area were obtained by Dobru\v sin in \cite{D}, and Lanford-Ruelle in \cite{LR}, who considered well behaved $\Z^d$-subshifts coupled with a sufficiently regular potential. In this setting, they were able to show that an invariant measure is an equilibrium state if and only if it can be locally characterized by the Gibbs property. 

We say a measure $\mu$ is Gibbs for $\phi$ if it satisfies a conditional probability condition. For a finite $F \Subset G$, we define the $F$-language of $X$, $L_F(X) = \{ w \in \mathcal{A}^F : \exists x \in X : x_F = w \}$, to be the set of all $F$-shape configurations that are legal in $X$. For $w \in L_F(X)$ we can define the extender set of $w$ as in \cite{kass} and \cite{ormespavlovextenderset}, $E_X(w) = \{ \eta \in \mathcal{A}^{F^c} : w \eta \in X \}$, to be the collection of all background configurations for $w$ such that $w \eta \in X$. We say that $\mu$ is Gibbs for $\phi$ if for any configuration $w \in L_F(X)$, and almost every background configuration $\eta \in E_X(w)$, we have 
$$\mu(w || \eta) = \frac{exp( \phi( w\eta))}{\sum_{v \in L_F(X)}  exp( \phi( v \eta)) \cdot 1_X(v \eta) } .$$
The Dobru\v sin theorem and Lanford-Ruelle theorem relating Gibbs measures and equilibrium states have been extended to the countable amenable group subshift setting in \cite{dlramenable1} and \cite{dlramenable2}, where it was shown that  for sufficiently regular subshifts and potentials with summable variation, an invariant measure is an equilibrium state if and only if it is Gibbs for the potential.

In the case where $\phi = 0$, equilibrium states correspond to measures of maximal entropy (MMEs). Parry showed in \cite{parry} that for $\Z$-subshifts of finite type, the MME is unique (and in fact by application of the Lanford-Ruelle theorem, it is Gibbs for $\phi = 0$). In \cite{bowen}, Bowen showed that for an expansive $\Z$-action on a compact metric space satisfying the specification property and a potential with summable variation, there exists a unique equilibrium state. However in the $\Z^d$ setting with $d \geq 2$, Burton and Steif in \cite{burtonmanymme} and \cite{burtonnewmme} were able to construct strongly irreducible subshifts of finite type with non-unique MMEs. Additionally, in the $\Z$ setting, there are trivial examples of subshifts with positive entropy and a unique MME such that the MME is not Gibbs for $\phi = 0$. Take for example, the product of a full shift with the orbit closure of the point $0^\infty 1 0^\infty$, whose unique MME is the product of the unique MME on the full shift with $\delta_{0^\infty}$. 


While neither uniqueness nor the Gibbs property may be attainable for MMEs for a general subshift, Garcia-Ramos and Pavlov proved in \cite{GRP} that for arbitrary $\Z^d$-subshifts, and any MME, one can obtain bounds on the measures of finite configurations under a replaceability condition. For a subshift $X \subset \mathcal{A}^{\Z^d}$ and $w, v \in L_F(X)$, we say $v$ is replaceable by $w$ if $E_X(v) \subset E_X(w)$. Garcia-Ramos and Pavlov showed that for any MME $\mu$ on a $\Z^d$-subshift, if $v$ is replaceable by $w$ then  $\mu([v]) \leq \mu([w])$. 

The context considered in this paper will combine that of Meyerovitch in \cite{meyerovitch} and Garcia-Ramos and Pavlov in \cite{GRP}. We make no assumptions on the subshift under consideration, and we require that the potential $\phi \in SV(X)$ has summable variation. Our results make use of a class of Borel isomorphisms: for finite configurations $v, w \in L_F(X)$, define $\xi_{v, w}$ pointwise to swap $v$ and $w$ in the $F$ location whenever legal in $X$. Note here that we do not require $\xi_{v, w}$ to be continuous, and in general it is not (see Section 2.2 for a precise definition and further discussion). Our first result can now be stated:

\begin{theorem}\label{sv theorem} Let $G$ be a countable amenable group and $X$ be a $G$-subshift. Let $\phi \in SV(X)$, $\mu_\phi$ an equilibrium state for $\phi$, $F \Subset G$ and $v, w \in L_F(X)$. If $E_X(v) \subset E_X(w)$ then 
$$\mu_\phi([v]) \leq \mu_\phi([w]) \cdot  \sup_{x \in [v]} exp\left(  \sum_{g \in G} \phi(\sigma_g(x)) - \phi(\sigma_g(\xi_{v, w}(x)))  \right).$$
\end{theorem}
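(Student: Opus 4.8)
The plan is to reduce the statement to a counting inequality and then extract it from the variational principle. First I would check that the quantity in the exponent is well defined: writing $\Delta(x) = \sum_{g\in G}\bigl(\phi(\sigma_g(x)) - \phi(\sigma_g(\xi_{v,w}(x)))\bigr)$, the configurations $x$ and $\xi_{v,w}(x)$ agree off the finite set $F$, so each summand is controlled by the variation of $\phi$ over a coordinate window growing with $g$; summable variation makes the series converge absolutely and uniformly on $[v]$, so $C := \sup_{x\in[v]}\exp(\Delta(x))$ is finite. It is worth noting that if $\mu_\phi$ were known to be Gibbs the whole statement would be immediate: writing $\mu_\phi([v]) = \int_{E_X(v)} \mu_\phi(v \,||\, \eta)\,d\mu_\phi$ and comparing the conditional probabilities of $v$ and $w$ over the common background set $E_X(v)\subseteq E_X(w)$ produces exactly the factor $\exp(\Delta)$, while the inclusion of extender sets gives the one-sided inequality. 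The content of the theorem is that this "one-sided Gibbs bound" survives for an arbitrary equilibrium state, where no such conditional formula is available.

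Since we assume nothing about the subshift, the Dobrushin--Lanford--Ruelle machinery is unavailable, and I would instead argue directly from maximality of the pressure. The key structural input is replaceability: because $E_X(v)\subseteq E_X(w)$, replacing an occurrence of $v$ by $w$ at any site is always legal, and, crucially, remains legal even after other such replacements have been made, since replaceability quantifies over all backgrounds. This lets swaps at distinct, well-separated sites be performed independently. The plan is then to fix a F\o lner window $W$ that is $(\epsilon,F)$-invariant, obtained so as to be compatible with an exact tiling of $G$ via the Downarowicz--Huczek--Zhang theorem, and to express $h(\mu_\phi) + \int\phi\,d\mu_\phi$ as the limit over such $W$ of the normalized log partition functions that weight each $W$-pattern by a Birkhoff-type sum of $\phi$.

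The core step is a weighted injection argument adapted from Garcia-Ramos and Pavlov. For a $\mu_\phi$-typical configuration I would select a maximal $F$-separated family of sites in $W$ at which $v$ occurs; by the ergodic theorem this family has density approaching $\mu_\phi([v])$ up to boundary loss. Toggling $v\to w$ over subsets of this family produces an exponentially large collection of distinct legal $W$-patterns, and each single toggle changes the weight by a factor that, by the definition of the cocycle and summable variation, is $\exp(-\Delta)$ up to an error absorbed by the $(\epsilon,F)$-invariance of $W$. Comparing the total weight generated this way against the partition function controlled by the equilibrium property, then letting $W$ exhaust $G$ and $\epsilon\to 0$, should force $\mu_\phi([v]) \le C\,\mu_\phi([w])$; the asymmetry of the extender-set inclusion is exactly what makes the resulting inequality one-directional.

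The main obstacle is the bookkeeping in this counting step rather than any single conceptual point. Three issues must be handled simultaneously: separation, so that toggles neither interact through legality constraints nor collide as patterns, ensuring the map is genuinely injective; boundary control, so that occurrences meeting the $F$-boundary of $W$ and the tails of the cocycle sums contribute negligibly, which is where $(\epsilon,F)$-invariance and summable variation are used together; and the precise matching of the per-toggle weight with $\exp(\Delta)$ so that the supremum over $[v]$, and not some larger constant, appears in the final bound. Getting a uniform handle on these error terms, uniformly in $W$ and in the chosen typical configuration, is the delicate part of the proof.
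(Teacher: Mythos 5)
Your proposal is correct and follows essentially the same route as the paper: a Garcia-Ramos--Pavlov-style replacement/injection argument in which occurrences of $v$ along sparse sites (obtained from the Downarowicz--Huczek--Zhang tiling) are toggled to $w$, the per-toggle weight loss is bounded by the supremum of the cocycle, and the resulting gain in the normalized log partition function contradicts the variational principle. The only notable difference is that the paper fixes a configuration-independent partition into left $F$-sparse sets and pigeonholes to one part (which is what makes the preimage-counting via binomial coefficients and the Stirling comparison go through cleanly), rather than choosing a maximal $F$-separated family per configuration as you suggest.
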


We note here that this inequality holds whenever the conclusion of the Lanford-Ruelle Theorem also holds (Theorem \ref{lr theorem} below), or even more generally when the conclusion of Theorem \ref{homoclinic theorem} holds (as noted in Observation \ref{thm3 obs}). In particular, the equation immediately holds for all subshifts of finite type and potentials with summable variation, the novelty here is that we require no assumptions on the structure of the subshift $X$. It is in this general case where we must discuss extender sets, as in \cite{GRP} and \cite{meyerovitch}.

In general this supremum may be hard to compute. However, an immediate corollary in the locally constant case allows us to easily compute this bound when $v$ and $w$ agree on a sufficient boundary. First, for finite $H \Subset G$, we call $\phi$ an $H$-potential when if   $x_H = y_H$, then $\phi(x) = \phi(y)$. In particular, this means for any $v \in L_F(X)$ with $H \subset F$, we can write $\phi(v)$ unambiguously to mean $\phi(x)$ for any $x \in X$ such that $x_F = v$ since for all $x, y \in X$ with $x_F = y_F = v$, $\phi(x) = \phi(y)$. We also denote $H^\pm = H \cup H^{-1}$. 
 
\begin{corollary}\label{locally constant cor} Let $H, F \Subset G$, $v, w \in L_F(X)$, and $\phi$ be an $H$-potential. Suppose that $E_X(v) \subset E_X(w)$ and for all $g \in F^c H^\pm \cap F$, $v_g = w_g$. Then for any equilibrium state $\mu_\phi$ for $\phi$, 
$$\mu_\phi([v]) \leq \mu_\phi([w])  \cdot exp \left( \sum_{g \in F \backslash F^c H^{-1}} \phi ( \sigma_g (v)) - \phi ( \sigma_g(\xi_{v, w}(w))) \right).$$
\end{corollary}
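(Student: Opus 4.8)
The plan is to specialize Theorem \ref{sv theorem} and show that, for an $H$-potential, the a priori infinite cocycle sum $\sum_{g\in G}\bigl(\phi(\sigma_g x)-\phi(\sigma_g \xi_{v,w}(x))\bigr)$ collapses to the stated finite sum and, crucially, becomes independent of the particular $x\in[v]$, so that the supremum disappears. I would fix $x\in[v]$ and set $y=\xi_{v,w}(x)$; since $E_X(v)\subseteq E_X(w)$ the swap is legal, so $y\in[w]$, with $x$ and $y$ agreeing off $F$ and $x_F=v$, $y_F=w$. Because $\phi$ is an $H$-potential, each term $\phi(\sigma_g x)-\phi(\sigma_g y)$ depends only on the contents of $x$ and $y$ on the translate $gH$, so only finitely many $g$ can contribute a nonzero value.

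Next I would classify the translates $gH$ by their position relative to $F$. If $gH\subseteq F^c$, the term vanishes because $x$ and $y$ agree off $F$. If $gH\subseteq F$ — equivalently $g\in F\setminus F^cH^{-1}$ — the term equals $\phi(\sigma_g(v))-\phi(\sigma_g(w))$, a quantity determined by the patterns $v,w$ alone; this is exactly the regime in which $\phi(\sigma_g(v))$ is unambiguously defined, the window sitting inside $F$, and the second term, written $\phi(\sigma_g(\xi_{v,w}(w)))$ in the statement, is the corresponding post-swap value on the pattern $w$. The remaining, straddling translates are where the hypothesis enters: I would use the agreement $v_g=w_g$ on $F^cH^\pm\cap F$ to argue that any $gH$ meeting both $F$ and $F^c$ can only overlap $F$ in positions where $v$ and $w$ coincide, so $x$ and $y$ agree throughout $gH$ and the term again vanishes.

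Collecting the surviving contributions leaves exactly $\sum_{g\in F\setminus F^cH^{-1}}\bigl(\phi(\sigma_g(v))-\phi(\sigma_g(w))\bigr)$, where interior windows on which $v$ and $w$ happen to agree contribute zero and may be harmlessly included in the index set. Since this value no longer depends on $x$, the supremum in Theorem \ref{sv theorem} equals its exponential, and substituting yields the claimed bound.

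The main obstacle is the boundary bookkeeping. In a general countable group there is no order or dimension to exploit, so identifying precisely which translates $gH$ straddle $\partial F$, and verifying that confining the disagreement set $\{g\in F: v_g\neq w_g\}$ away from the boundary via $F^cH^\pm\cap F$ annihilates every straddling term, is a careful manipulation of the product sets $F$, $F^c$, $H$, and $H^{-1}$ that is sensitive to the shift convention and to whether $e\in H$. This is the only place where the hypotheses are genuinely used and where inverse- and side-of-action errors are easiest to make; everything else — legality of the swap, finiteness of the sum, and collapse of the supremum — follows routinely once the windows are correctly sorted.
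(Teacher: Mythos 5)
Your proposal is essentially the paper's proof: both specialize Theorem \ref{sv theorem} and dispose of the cocycle terms by sorting the windows $gH$ into those contained in $F$ (which yield the pattern-determined finite sum over $F\setminus F^cH^{-1}$ and make the supremum collapse, since $\phi$ is an $H$-potential), those contained in $F^c$ (which vanish trivially), and the straddling ones (which are claimed to vanish via the agreement hypothesis on $F^cH^\pm\cap F$). The only bookkeeping difference is that the paper sorts by the position of $g$ itself (the $H$-boundary of $F$, the complement $F^c$, and the interior $F\setminus F^cH^{-1}$) rather than by the position of the window $gH$ --- which sidesteps your implicit use of $e\in H$ in the equivalence ``$gH\subseteq F$ iff $g\in F\setminus F^cH^{-1}$'' --- but otherwise the two arguments coincide step for step, including at the delicate straddling-window cancellation you flag as the main obstacle, which the paper asserts in exactly the form you propose.
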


As another immediate corollary, by letting $\phi = 0$ we extend Theorem 4.4 of \cite{GRP} by Garcia-Ramos and Pavlov to arbitrary countable amenable groups. Although the theorem in \cite{GRP} is stated for countable, finitely generated, torsion free, amenable groups, due to an unfortunate error their proof technique only applies immediately to the case where $G = \Z^d$. The error is the false assertion that for any torsion free, finitely generated, countable amenable group $G = \braket{g_1, \dots, g_k}$, the subgroup generated by $\braket{g_1^n, \dots, g_k^n}$ has finite index in $G$. This is known to be false and can be shown not to hold in a variety of examples, including the Lamplighter group. 


$\;$


In addition to the classical definition of Gibbs in the sense of Dobru\v sin, Lanford, and Ruelle, another fruitful approach has been to consider a measure that is conformal Gibbs for a potential. It was shown by Borsato and MacDonald in \cite{conformalisgibbs} that for subshifts over a countable group and any potential $\phi$, a measure is Gibbs for $\phi$ if and only if it is conformal Gibbs for $\phi$ (see Section 4 for a precise definition). As a consequence, this means that $\mu_\phi$ is Gibbs for $\phi$ if and only if, for every Borel isomorphism of the form $\xi_{v, w}$ and for $\mu_\phi$-a.e. $x \in X$, 
$$\frac{ d (\mu_\phi \circ \xi_{v, w} ) }{d \mu}(x) = exp \left( \sum_{g \in G} \phi( \sigma_g(\xi_{v, w}(x))) - \phi( \sigma_g(x)) \right). $$
Since, in general, equilibrium states are not necessarily Gibbs, this equality cannot always hold. In the general subshift setting of this paper, little can be said of this LRN derivative.  


In \cite{meyerovitch}, Meyerovitch showed that for a general $\Z^d$ subshift $X$ and potential $\phi$ with $d$-summable variation, if $\mu_\phi$ is an equilibrium state for $\phi$ and if $E_X(v) = E_X(w)$, then $\frac{ d (\mu_\phi \circ \xi_{v, w} ) }{d \mu}$ satisfies the equation above. In the language of Meyerovitch, $\mu_\phi$  must be $(\mathfrak{T}_X^0, \psi_\phi)$-conformal.



In Section 4, we will use Theorem \ref{sv theorem} to obtain the following bound on this LRN, showing a conformal Gibbs-like result.  
\begin{theorem}\label{homoclinic theorem} Let $F \Subset G$, $v, w \in L_F(X)$, $\phi \in SV(X)$, and $\mu_\phi$ an equilibrium state for $\phi$. If $E_X(v) \subset E_X(w)$, then $\mu_\phi \circ \xi_{v, w}$ is absolutely continuous with respect to $\mu_\phi$ when restricted to $[w]$ and for $\mu_\phi$-almost every $x \in [w]$, 
$$\frac{ d (\mu_\phi \circ \xi_{v, w} ) }{d \mu_\phi }(x) \leq exp \left( \sum_{g \in G} \phi( \sigma_g(\xi_{v, w}(x))) - \phi( \sigma_g(x)) \right).$$
\end{theorem}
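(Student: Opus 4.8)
\emph{Proof proposal.} Write $\xi=\xi_{v,w}$. Since $\xi$ is a Borel involution and $E_X(v)\subseteq E_X(w)$, it maps $[v]$ bijectively onto the closed set $[w]_v:=\{x\in[w]:v\,x_{F^c}\in X\}$ (the points of $[w]$ whose background lies in $E_X(v)$), while acting as the identity on $[w]\setminus[w]_v$, where the swap is illegal. The plan is to study the pushforward $\nu:=\mu_\phi\circ\xi$ on $[w]$ through the decomposition $\nu=\nu_1+\nu_2$, where $\nu_1(A):=\nu(A\cap[w]_v)$ and $\nu_2:=\nu|_{[w]\setminus[w]_v}$. On $[w]\setminus[w]_v$ we have $\xi=\mathrm{id}$, so $\nu_2=\mu_\phi|_{[w]\setminus[w]_v}$ and its derivative is $1=\exp(0)$, which matches the claimed bound since the cocycle vanishes there. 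This reduces the theorem to bounding $d\nu_1/d\mu_\phi$ on $[w]_v$.

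\emph{Sub-cylinder comparison.} Fix finite sets $K_1\subseteq K_2\subseteq\cdots$ with $F\subseteq K_1$ and $\bigcup_n K_n=G$, so the $K_n$-cylinders form a refining sequence of partitions generating the Borel $\sigma$-algebra. For $x\in[w]_v$ put $u:=x_{K_n}$ and let $u'$ be the $K_n$-pattern agreeing with $u$ off $F$ and equal to $v$ on $F$. First I would check the combinatorial identity $\nu_1([u])=\mu_\phi([u'])$ (the swap carries $[u]\cap[w]_v$ bijectively onto the full cylinder $[u']$) together with the extender-set containment $E_X(u')\subseteq E_X(u)$, which lifts $E_X(v)\subseteq E_X(w)$ from shape $F$ to shape $K_n$. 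Applying Theorem~\ref{sv theorem} to the pair $(u',u)$ and observing that $\xi_{u',u}$ coincides with $\xi$ on $[u']$ (the two patterns differ only on $F$), the resulting exponent is exactly the cocycle of $\xi$, giving
\[
g_n(x):=\frac{\nu_1([x_{K_n}])}{\mu_\phi([x_{K_n}])}=\frac{\mu_\phi([u'])}{\mu_\phi([u])}\le\sup_{z\in[x_{K_n}]\cap[w]_v}\exp\!\Big(\sum_{g\in G}\phi(\sigma_g\xi(z))-\phi(\sigma_g z)\Big).
\]

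\emph{Continuity, absolute continuity, and limit.} Denote the cocycle by $\Psi(z)=\sum_{g\in G}\phi(\sigma_g\xi(z))-\phi(\sigma_g z)$. Summable variation makes this series converge absolutely and uniformly, so $\Psi$ is bounded; the uniform cylinder estimate $\nu_1([u])\le e^{\sup|\Psi|}\mu_\phi([u])$ then passes to all Borel sets, yielding $\nu_1\ll\mu_\phi$ (and hence $\nu\ll\mu_\phi$ on $[w]$). Next I would show that $\Psi$ restricted to the closed set $[w]_v$ is continuous: there $\xi(z)=v\,z_{F^c}$ depends continuously on $z$, so $\Psi|_{[w]_v}$ is a uniform limit of continuous functions. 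By the martingale (Radon–Nikodym differentiation) theorem for the refining cylinder filtration, $g_n\to d\nu_1/d\mu_\phi$ $\mu_\phi$-a.e.; since for $x\in[w]_v$ the suprema in the display converge to $\exp\Psi(x)$ as $[x_{K_n}]\cap[w]_v$ shrinks to $\{x\}$, we conclude $d\nu_1/d\mu_\phi\le\exp\Psi$ on $[w]_v$. Combined with the trivial contribution of $\nu_2$, this gives the stated bound for $d(\mu_\phi\circ\xi_{v,w})/d\mu_\phi$.

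\emph{Main obstacle.} The crux is that $\xi_{v,w}$ is genuinely discontinuous and $[w]_v$ is only a closed proper subset of $[w]$ — exactly the feature that forces the extender-set formalism in the general-subshift setting. Handling this by splitting off the region where $\xi$ is the identity, and invoking continuity of the cocycle only on the restricted domain $[w]_v$ (equivalently on $[v]$), where the swap is legal and continuous, is what makes the suprema over shrinking cylinders converge to the pointwise value. The secondary technical point is verifying that both the extender-set containment and the cocycle identity lift from shape $F$ to the larger shapes $K_n$, so that Theorem~\ref{sv theorem} applies verbatim to each sub-cylinder with the same exponent.
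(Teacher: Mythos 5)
Your proposal is correct and follows essentially the same route as the paper: absolute continuity is obtained by applying Theorem~\ref{sv theorem} to cylinder approximations, the cocycle $\Psi$ is shown continuous on the locus where the swap is legal via locally constant (summable-variation) approximations, and the derivative bound follows from applying Theorem~\ref{sv theorem} to the pairs of sub-cylinders $(u',u)$ and passing to the limit by martingale differentiation. Your explicit decomposition $\nu=\nu_1+\nu_2$ isolating the fixed locus $[w]\setminus[w]_v$ is a slightly more careful bookkeeping of a point the paper handles implicitly, but the substance of the argument is the same.
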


{We have become aware that the Corollary \ref{conformal cor} has been proven in even greater generality in the sofic group setting in \cite{barbieri-meyerovitch}. However we recover the fact in the countable amenable group setting as an easy corollary of Theorem \ref{homoclinic theorem}.  
\begin{corollary}\label{conformal cor} Let $X \subset \mathcal{A}^G$ be a subshift over a countable amenable group $G$, let $\phi \in SV(X)$ be a potential with summable variation, and let $\mu_\phi$ be an equilibrium state for $\phi$. Then $\mu_\phi$ is $(\mathfrak{T}_X^0, \psi_\phi)$-conformal. 
\end{corollary}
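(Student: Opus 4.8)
The plan is to upgrade the inequality of Theorem~\ref{homoclinic theorem} to an exact equality in the symmetric case $E_X(v)=E_X(w)$, and to observe that this equality is precisely what $(\mathfrak{T}_X^0,\psi_\phi)$-conformality requires. Recall that the homoclinic (Gibbs) relation $\mathfrak{T}_X^0$ is generated by the involutions $\xi_{v,w}$ for which the swap is reversible, i.e.\ those pairs $v,w \in L_F(X)$ with $E_X(v)=E_X(w)$, and that $\mu_\phi$ is $(\mathfrak{T}_X^0,\psi_\phi)$-conformal exactly when, for each such generating holonomy and for $\mu_\phi$-a.e.\ $x$,
\[
\frac{d(\mu_\phi\circ\xi_{v,w})}{d\mu_\phi}(x) = \exp\!\Big(\sum_{g\in G}\phi(\sigma_g(\xi_{v,w}(x)))-\phi(\sigma_g(x))\Big).
\]
Writing $\Delta_{v,w}(x)$ for the exponent on the right, it therefore suffices to fix a pair with $E_X(v)=E_X(w)$ and promote Theorem~\ref{homoclinic theorem}'s bound $\frac{d(\mu_\phi\circ\xi_{v,w})}{d\mu_\phi}\le e^{\Delta_{v,w}}$ to an identity.

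Next I would exploit the symmetry $E_X(v)=E_X(w)$ by applying Theorem~\ref{homoclinic theorem} in both directions. The inclusion $E_X(v)\subset E_X(w)$ gives absolute continuity of $\mu_\phi\circ\xi_{v,w}$ with respect to $\mu_\phi$ on $[w]$ together with the bound $\rho:=\frac{d(\mu_\phi\circ\xi_{v,w})}{d\mu_\phi}\le e^{\Delta_{v,w}}$ there; applying the theorem to the pair $(w,v)$, and using that $\xi_{w,v}=\xi_{v,w}$ is an involution with $\Delta_{w,v}=\Delta_{v,w}$, gives the matching absolute continuity and bound $\rho\le e^{\Delta_{v,w}}$ on $[v]$. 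The two absolute-continuity statements together yield that $\mu_\phi\circ\xi_{v,w}$ and $\mu_\phi$ are mutually equivalent on $[v]\cup[w]$, so $\rho>0$ a.e.\ and the Radon--Nikodym chain rule for the non-singular involution $\xi_{v,w}$ gives the reciprocity relation $\rho(x)\,\rho(\xi_{v,w}(x))=1$ for $\mu_\phi$-a.e.\ $x$.

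Finally I would close the argument by a squeeze. The cocycle is antisymmetric, $\Delta_{v,w}(\xi_{v,w}(x))=-\Delta_{v,w}(x)$, since $\xi_{v,w}$ is an involution. For $\mu_\phi$-a.e.\ $x\in[w]$ we have $\xi_{v,w}(x)\in[v]$, whence
\[
1=\rho(x)\,\rho(\xi_{v,w}(x))\le e^{\Delta_{v,w}(x)}\,e^{\Delta_{v,w}(\xi_{v,w}(x))}=e^{\Delta_{v,w}(x)-\Delta_{v,w}(x)}=1.
\]
Since each factor is bounded above by its exponential, equality of the product forces $\rho(x)=e^{\Delta_{v,w}(x)}$ on $[w]$, and symmetrically on $[v]$; off $[v]\cup[w]$ the map $\xi_{v,w}$ is the identity and $\Delta_{v,w}\equiv 0$, so the identity holds trivially. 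This is exactly the conformal equality above, proving the corollary.

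The main obstacle I anticipate is not the squeeze but the two facts surrounding it. First, I must confirm that $(\mathfrak{T}_X^0,\psi_\phi)$-conformality in Meyerovitch's sense really is equivalent to the pointwise Radon--Nikodym equality on the generating holonomies $\xi_{v,w}$ with $E_X(v)=E_X(w)$; this requires matching the cocycle $\psi_\phi$ on $\mathfrak{T}_X^0$ with $\Delta_{v,w}$ and checking that these holonomies do generate the relation. Second, the reciprocity $\rho(x)\,\rho(\xi_{v,w}(x))=1$ must be justified rigorously: one must establish mutual absolute continuity (so that $\rho$ is positive a.e.) and apply the chain rule for the Radon--Nikodym derivative of a non-singular invertible transformation, carefully restricted to the cylinders on which $\xi_{v,w}$ acts nontrivially.
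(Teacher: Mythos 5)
Your proposal is correct, and it reduces to the generating holonomies $\xi_{v,w}$ with $E_X(v)=E_X(w)$ in exactly the same way the paper does (citing Meyerovitch's observation that these generate $\mathcal{F}(X)$ and hence $\mathfrak{T}_X^0$, together with the cocycle property of $\psi_\phi$). Where you diverge is in how the two one-sided bounds are combined into an equality. The paper goes back to Theorem \ref{sv theorem} at the level of cylinder sets: for $x\in[w]$ it forms $w_n=x_{F_n}$, $v_n=\xi_{v,w}(x)_{F_n}$, applies the cylinder inequality in both directions to get $\inf_{y\in[v_n]}e^{-f(y)}\le \mu_\phi([v_n])/\mu_\phi([w_n])\le \sup_{y\in[v_n]}e^{-f(y)}$, and passes to the limit using the continuity of $f$ on $[v]$ (Observation \ref{f cont}) — essentially rerunning the differentiation-of-measures argument from the proof of Theorem \ref{homoclinic theorem} with a two-sided squeeze. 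You instead treat Theorem \ref{homoclinic theorem} as a black box, apply it to both $(v,w)$ and $(w,v)$, and close with the purely measure-theoretic identity $\rho(x)\,\rho(\xi_{v,w}(x))=1$ for the nonsingular involution $\xi_{v,w}$, combined with the antisymmetry $\Delta_{v,w}\circ\xi_{v,w}=-\Delta_{v,w}$. This is a genuinely different and arguably cleaner closing step: it avoids a second pass through the cylinder limits and the continuity of $f$, at the price of having to justify mutual absolute continuity on $[v]\cup[w]$ (which does follow from the two applications of the theorem plus the involution property, since $\mu_\phi\circ\xi_{v,w}\ll\mu_\phi$ applied to $\xi_{v,w}(A)$ yields the converse implication) and the a.e.\ chain rule — both standard, and you correctly flag them as the points needing care. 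One small point worth making explicit in a write-up: the exceptional null set for the bound on $[v]$ pulls back under $\xi_{v,w}$ to a null subset of $[w]$ precisely because of the nonsingularity you establish, so the squeeze really does hold for $\mu_\phi$-a.e.\ $x\in[w]$.
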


The structure of this paper is as follows. We begin with Section 2 on the relevant preliminaries, discussing countable amenable groups and their relevant properties. We then formally introduce subshifts over a countable amenable group, and discuss their thermodynamic formalism. Finally, we discuss equilibrium measures, the Gibbs property, and their relationship.  

In Section 3 we prove Theorem \ref{sv theorem}, beginning with a lemma using Downarowicz, Huczek, and Zhang's exact tiling result from \cite{dhz} to generate a sufficiently sparse almost partition of a given group $G$. After some preliminary lemmas in the subshift setting, we prove Theorem \ref{sv theorem} and conclude Corollary \ref{locally constant cor} in the locally constant case. 

Finally, Section 4 is concerned with the conformal Gibbs perspective where we formally introduce the concept and relevant definitions. We then prove Theorem \ref{homoclinic theorem}, relate it to the results of Meyerovitch, and conclude by extending Theorem 3.1 and Corollary 3.2 of \cite{meyerovitch} to the countable amenable subshift setting.

%
%
%
%
%
%
%
%
%
%
%
%
%
%

\subsection*{Acknowledgements} This work was conducted during the author's PhD studies under the supervision of Dr. Ronnie Pavlov. The author would like to thank Dr. Pavlov for all of his assistance throughout the research and writing process. We would also like to thank the referee for their helpful report.

\section{Preliminaries}

\subsection{Countable amenable groups.} Let $G$ be a countable group and denote the identity of $G$ by $e$. We use the notation $K \Subset G$ to indicate that $K$ is a finite subset of $G$. A {\bf F\o lner sequence} for $G$ is a collection of finite subsets $\{ F_n \}$ of $G$ such that $G = \bigcup_{n \in \N} F_n$ and for all $K \Subset G$,  $\lim_{n \rightarrow \infty} |K F_n \triangle F_n| / |F_n| = 0$. A countable group $G$ is called {\bf amenable} if there exists a F\o lner sequence in $G$. 

For a given F\o lner sequence $\{ F_n \}$ we say the sequence is {\bf tempered} if there exists some $C > 0$ such that for all $n > 0$, $\left| \bigcup_{k < n} F_k^{-1} F_n \right| \leq C |F_n |.$ For any F\o lner sequence, there exists a subsequence that is tempered, see Proposition 1.5 of \cite{LINDENSTRAUSS} for a proof of this fact. A deeper discussion of F\o lner sequences and their relevant properties can be found in Chapter 4 of \cite{amenablechapter}, but for our purposes we note that for any amenable group $G$, there exists a tempered F\o lner sequence that can be taken such that $n!$ divides $|F_n|$ for every $n \in \N$. 

Similar in spirit we can define a relative almost-invariance: for any finite $F, T \Subset G$ and $\epsilon > 0$, we say $T$ is {\bf right $(K, \epsilon)$-invariant} if $|TK \triangle T| / |T| < \epsilon$. We can equivalently say $G$ is amenable if for every finite $K \Subset G$ and every $\epsilon > 0$, there exists some finite $T \Subset G$ that is right $(K, \epsilon)$-invariant.

In our setting we will also be interested in a sense of sparseness of sets, for this we define: 
\begin{defn} For any $S \subset G$, $F \Subset G$, we say $S$ is {\bf left $F$-sparse} if for all distinct $s, s' \in S$, 
$$ sF  \cap s' F  = \emptyset.$$
\end{defn}

Note here by a trivial application of the definition we know for any $F \subset H \Subset G$ and $S \subset G$. If $S$ is left $H$ sparse, then $S$ is left $F$ sparse. We will now define the right (left resp.) $H$-interior and $H$-boundary of $F$ for $F, H \Subset G$.

\begin{defn}  The {\bf right $H$-interior} of $F$, denoted by $Int_H(F)$, is defined by: 
$$Int_H(F) = \bigcap_{h \in H} Fh = \{ f \in F : \forall h \in H,  fh \in F \}.$$

The {\bf right $H$-boundary} of $F$, denoted $\partial_H(F)$ is all elements of $F$ not in the $H$-interior. Precisely: $$\partial_H(F) = F \backslash Int_H(F) = \{ f \in F : \exists h \in H \text{ s.t. } fh \notin F \}.$$

The left $H$-interior/boundary of $F$ are defined similarly. 
\end{defn}

The proof of our main theorem will also require taking advantage of a result by Downarowicz, Huczek, and Zhang regarding exact tilings of $G$ from \cite{dhz}. For a countable amenable group $G$, a {\bf finite tiling} $\mathcal{T} = \{ T_i : 1 \leq i \leq k \}$ is a collection of tiles such that $\bigcup_{i \leq k} T_i = G$ and for each $T_i \in \mathcal{T}$, there exists a finite shape $S_i \Subset G$, and a collection of centers $C(S_i) \subset G$ such that $T_i =  S_i C(S_i)$. In particular, a tiling $\mathcal{T}$ is called {\bf exact} if for distinct $c_1 , c_2 \in C(S_i)$, $S_i c_1 \cap S_i c_2 = \emptyset$. We also note it can be assumed that $e \in S$ for every shape. We can now state their result: 

\begin{fact}[Theorem 4.3 \cite{dhz}] Given any infinite, countable amenable group $G$, any $\epsilon > 0$, and any finite $K \Subset G$, there exists an exact tiling $\mathcal{T}$ where each shape is right $(K, \epsilon)$-invariant. 
\end{fact}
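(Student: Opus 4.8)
\textbf{Proof proposal for Theorem 4.3 of \cite{dhz} (existence of exact tilings with almost-invariant shapes).}

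The plan is to construct the exact tiling through a greedy-extraction procedure combined with a compactness/diagonalization argument, leveraging the amenability of $G$ at the scale of the almost-invariance we need. First I would fix $\epsilon > 0$ and $K \Subset G$, and use amenability to select a single finite shape candidate: by the equivalent formulation of amenability stated in the excerpt, there exists a finite $T \Subset G$ that is right $(K, \epsilon/2)$-invariant. The central difficulty, however, is not finding \emph{one} almost-invariant set but tiling \emph{all} of $G$ \emph{exactly} by translates whose shapes remain almost-invariant, with no overlaps and no gaps. A naive greedy packing by translates of a single shape $T$ will generically leave uncovered gaps, and the gap-filling tiles may fail to be $(K,\epsilon)$-invariant; this tension between exactness and almost-invariance is the main obstacle.

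To resolve this I would proceed in stages, building a nested family of shapes and covering successively larger portions of $G$. Concretely, I would first establish a \emph{quasi-tiling} lemma (in the spirit of Ornstein--Weiss): for any $\delta > 0$ there exist finitely many almost-invariant shapes $S_1, \dots, S_m$ and centers so that their disjoint translates cover a $(1-\delta)$-fraction of any sufficiently invariant region. The key quantitative input is that right $(K,\epsilon)$-invariance is preserved under the relevant set operations up to controlled error: if $T$ is right $(K,\epsilon/2)$-invariant and we carve $T$ along boundaries of previously placed tiles, the carved pieces inherit almost-invariance because the boundary $\partial_K(T)$ is a small fraction of $T$. I would track these boundary contributions carefully using the right $H$-boundary $\partial_H(F)$ defined in the excerpt, showing $|\partial_K(T)|/|T| < \epsilon/2$ translates into the needed bound for each sub-shape.

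The passage from quasi-tiling to \emph{exact} tiling is where the real work lies, and here I would invoke the dynamical reformulation that \cite{dhz} employs: exact tilings of $G$ correspond to nonempty, $G$-invariant, closed subsets of a symbolic system encoding "admissible tile placements," and the existence of an invariant measure concentrated on exactly-tiled configurations follows from amenability via a fixed-point argument. The plan is to encode the almost-invariance constraint into the alphabet of this symbolic system, verify that the constraint set is closed and $G$-invariant, and then show it is nonempty by producing approximate configurations from the quasi-tiling lemma and extracting a limit point in the weak-$*$ topology. A generic point of an ergodic invariant measure on this system then yields the desired exact tiling $\mathcal{T} = \{T_i\}$ with each shape right $(K,\epsilon)$-invariant, and the normalization $e \in S_i$ is arranged by translating each shape to contain the identity. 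The hardest step to make rigorous is ensuring \emph{exactness}, namely that the limiting configuration has no overlapping translates: this requires that the overlap-forbidding constraint be a closed condition compatible with invariance, which is the technical heart of the argument in \cite{dhz} and the step I would expect to demand the most care.
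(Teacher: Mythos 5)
First, a point of order: the paper does not prove this statement at all --- it is imported verbatim as Theorem 4.3 of \cite{dhz}, so the benchmark is the actual Downarowicz--Huczek--Zhang argument, and your sketch both diverges from it and has genuine gaps at exactly the decisive step. One concrete error first: your claim that pieces carved out of a right $(K,\epsilon/2)$-invariant set $T$ ``inherit almost-invariance because $\partial_K(T)$ is a small fraction of $T$'' is false as stated. Almost-invariance is not hereditary: removing a subset $R \subset T$ degrades $(K,\epsilon)$-invariance by roughly $|K|\cdot|R|/|T \setminus R|$, which is only controlled when $|R|$ is small relative to $|T|/|K|$; but in a greedy packing the region carved away by previously placed tiles can be a large and arbitrarily shaped portion of $T$ (e.g.\ a piece of $T$ that is a thin shell around an earlier tile), and such a difference set need not be $(K,\epsilon')$-invariant for any useful $\epsilon'$. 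Ornstein--Weiss avoid this by never requiring the carved pieces to be invariant --- their $\epsilon$-disjointness bookkeeping works with covered proportions instead --- which is why their method tops out at quasitilings covering a $(1-\delta)$-fraction, with the number of shapes necessarily growing as $\delta \to 0$.

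The fatal gap, however, is the passage from quasitilings to exactness, which you delegate to a weak-$*$ compactness/fixed-point argument. This cannot work. With a \emph{fixed} finite family of shapes (hence a fixed finite alphabet for your symbolic encoding), every admissible configuration produced by the quasitiling lemma leaves an uncovered set of density bounded below, and a weak-$*$ limit of such measures assigns that same positive frequency to the ``uncovered'' symbol; so the limit measure is \emph{not} concentrated on exact tilings. To push the uncovered density to $0$ you must let the shape family grow without bound, which destroys both the compact finite-alphabet encoding and the finiteness of the shape collection demanded by the conclusion. Moreover, the fixed-point argument you invoke produces an invariant measure only on a \emph{nonempty} compact invariant set --- and nonemptiness of the space of exact tilings with $(K,\epsilon)$-invariant shapes is precisely the theorem, so the argument is circular at the key point. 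What \cite{dhz} actually do is run an infinite hierarchy of congruent \emph{dynamical} quasitilings, with level-$(k+1)$ shapes vastly more invariant than level-$k$ shapes, so that the points left uncovered at level $k$ can be absorbed into tiles at higher levels while distorting each shape by proportions that sum geometrically; exactness emerges from this hierarchical absorption, not from a compactness limit, and it is also what keeps the final collection of shapes finite and $(K,\epsilon)$-invariant. Your sketch acknowledges this step as ``the technical heart'' and defers it to \cite{dhz} --- that is, it assumes the theorem exactly where it is hardest.
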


Their result in fact included statements regarding the entropy of the tiling space which we have omitted since they are not necessary for the purposes of this paper.

\subsection{$G$-subshifts} Let $G$ be a countable amenable group. Let $\mathcal{A}$ be a finite set, called the alphabet, endowed with the discrete topology. Our configuration space is the set of functions $x: G \rightarrow \mathcal{A}$, which we denote $\mathcal{A}^G$, endowed with the product topology. For any $H \subset G$ and $x \in \mathcal{A}^G$, we denote the restriction of $x$ to $H$ by $x_H$. 

We define a $G$-action of homeomorphisms on $\mathcal{A}^G$ by the {\bf left-translation map}: for all $g \in G$, and $x \in \mathcal{A}^G$, we define pointwise $\sigma_g(x)_h = x_{gh}$. The set $\mathcal{A}^G$ and the collection of left-translations $(\sigma_g)_{g \in G}$ together form a topological dynamical system $(\mathcal{A}^G, \sigma)$ which we call the full $G$-shift on $\mathcal{A}$. A {\bf $G$-subshift} is a subset $X \subset \mathcal{A}^G$ that is closed in the product topology and is $\sigma$-invariant (i.e. for all $g \in G$, $\sigma_g(X) \subset X$). When $G$ is clear, we will refer to a $G$-subshift as just a subshift.

For any finite $F \Subset G$, and any $w \in \mathcal{A}^F$, we call $w$ a {\bf configuration} of shape $F$. For a subshift $X$ and a finite configuration $w$ of shape $F \Subset G$, we say $w$ is in the {\bf language of $X$} (or $w$ is legal in $X$) if there exists some $x \in X$ such that $x_F = w$. We call $L_F(X) = \{ x_F : x \in X \}$ the {\bf$F$-language} of $X$, and $L(X) = \bigcup_{F \Subset G} L_F(X)$ the language of $X$. 

For any $F \Subset G$, and $w \in \mathcal{A}^F$, we define the {\bf cylinder set} of $w$ as follows: 
$$[w] = \{ x \in \mathcal{A}^G : x_F = w \}.$$
In particular, the cylinder sets form a basis for the product topology on $\mathcal{A}^G$. Cylinder sets are frequently taken intersected with a subshift, which will be clear by context. 

Finally we note here that the product topology on $\mathcal{A}^G$ is metrizable, and for any F\o lner sequence $\{ F_n \}$, the metric: 
$$d(x, y) = 2^{ - \min \{ n \in \N : x_{F_n} = y_{F_n} \} }$$
induces the product topology. This metric serves primarily to establish that $G$-subshifts are expansive. 

For any disjoint $H, K \subset G$ and any $x \in \mathcal{A}^H$, $y \in \mathcal{A}^K$, we define the {\bf concatenation} of $x$ and $y$ by $xy \in \mathcal{A}^{H \cup K}$ such that $(xy)_H = x$ and $(xy)_K = y$. We also define for any $v \in L_F(X)$,  the {\bf extender set of $v$}, $E_X(v)$, to be the collection of all background configurations for which $v$ is legal. Specifically, $E_X(v) = \{ \eta \in \mathcal{A}^{G \backslash F} : v\eta \in X \}$. For any $v, w \in L_F(X)$ we say $v$ is {\bf replaceable} by $w$ if $E_X(v) \subset E_X(w)$.

We will now define the following functions that will be useful in the proof of our main results and essential to our discussion of the conformal Gibbs corollaries. 
\begin{defn} For any $F \Subset G$, $v, w \in L_F(X)$, we define $\xi_{v, w} : X \rightarrow X$ through the following cases. 
\begin{itemize}
\item For $x \in [v]$, if the concatenation $w x_{F^c} \in X$ then $\xi_{v, w}(x) = w x_{F^c}$; otherwise, $\xi_{v, w}(x) = x$.
\item For $x \in [w]$, similarly define $\xi_{v, w}(x)$. 
\item Otherwise, let $\xi_{v, w}(x) = x$.
\end{itemize}
\end{defn}

Note this is exactly switching $v$ and $w$ in the $F$-location whenever the resulting point is still in $X$. It will be noted in Section 4 that these functions are in fact Borel isomorphisms that generate the homoclinic (Gibbs) relation.

$\;$

For a given subshift $X \subset \mathcal{A}^G$, will denote the collection of $\sigma$-invariant probability measures on $X$ by $\mathcal{M}_\sigma(X)$. The existence of such measures is guaranteed by the fact that $G$ is amenable. For a given $\mu \in \mathcal{M}_\sigma(X)$, for any collection of legal finite configurations $W \subset L(X)$, we will use $\mu(W) $ to mean $\mu( \bigcup_{w \in W} [w] )$.  

We say $\mu \in \mathcal{M}_\sigma(X)$ is {\bf ergodic} if for all measurable $A \subset X$, for all $g \in G$, if $\mu( A \triangle \sigma_g^{-1} A) = 0$, then $\mu(A) \in \{ 0, 1\}$. $\mathcal{M}_\sigma(X)$ is convex and compact under the weak-$*$ topology, in fact it forms a Choquet simplex whose extreme points are exactly the ergodic measures.

\subsection{Thermodynamic formalism}

The theory of thermodynamic formalism bridges the gap between microscopic and macroscopic descriptions of systems with many interacting particles, extending concepts of statistical mechanics to symbolic dynamics. Gibbs measures are central to this framework, enabling the analysis of global statistical properties derived from local interactions in a broad range of dynamical systems. This section delves into the thermodynamic formalism for countable amenable group actions on finite alphabet subshifts, examining topological pressure and its connection to statistical physics, the construction of partition functions, and the characterization of equilibrium states.

To begin, we must define topological pressure of a given potential over our subshift $X$. We first let $\phi : X \rightarrow \R$ be a real valued, continuous function which we will call a {\bf potential}. We will denote the set of all potentials on a subshift $X$ by $C(X)$. For any $F \Subset G$ we define the {\bf$F$-Birkhoff sum} of $\phi$: $\phi_F = \sum_{g \in F} \phi \circ \sigma_g$. Finally, for any open cover $\mathcal{U}$ of $X$, any $F \Subset G$, we define the open cover $\mathcal{U}^F = \bigvee_{f \in F} \sigma_f^{-1} \mathcal{U} =  \{ \bigcap_{f \in F} \sigma_f^{-1}(U_f) : U_f \in \mathcal{U} \}$. We use this notation to define the following partition function:

\begin{defn} We define a {\bf partition function} for any $F \Subset G$ and any open cover $\mathcal{U}$ of $X$: 
$$Z_F(\phi, \mathcal{U}) = \inf \left\{ \sum_{u \in \mathcal{U}'} exp \left( \sup_{x \in u} \phi_F(x) \right) : \mathcal{U}' \text{ is a subcover of } \mathcal{U}^F \right\} .$$
\end{defn}

We can define the topological pressure of $\phi$ with respect to a given open cover $\mathcal{U}$ to be 
$$P(\phi, \mathcal{U}) = \lim_{n \rightarrow \infty} |F_n|^{-1}  \log Z_{F_n}(\phi, \mathcal{U})$$
for any F\o lner sequence $\{ F_n \}$. This limit is guaranteed to exist and does not depend on the choice of F\o lner sequence, further discussion of which can be found \cite{bufetov} and \cite{Gurevich}. We can now define the following: 
\begin{defn} The {\bf topological pressure of $\phi$} is then 
$$P_{top}(\phi) = \sup_{\mathcal{U}} P(\phi, \mathcal{U}).$$
\end{defn}

Since in the subshift setting, the dynamical system $(X, \sigma)$ is expansive, the above supremum is attained at an open cover $\mathcal{U}$ of diameter less than or equal to the expansiveness constant. In particular, we can compute topological pressure by: 
$$P_{top}(\phi) = \lim_{n \rightarrow \infty} |F_n|^{-1} \log \sum_{w \in L_{F_n}(X)} exp \left( \sup_{x \in [w]} \phi_{F_n}(x) \right) . $$

In addition to the topological pressure, for a given invariant measure $\mu \in \mathcal{M}_\sigma(X)$, we can define the pressure of $\mu$ with respect to $\phi \in C(X)$ as follows: 
$$P_\phi(\mu) = h(\mu) +\int \phi d\mu$$
where $h(\mu)$ is the Kolmogorov-Sinai entropy of the invariant  probability measure $\mu$.  As shown by Ollagnier and Pinchon \cite{amenablevariational}, in the countable amenable subshift setting, the variational principle holds. In particular, for all $\phi \in C(X)$, 
$$P_{top}(\phi) = \sup_{\mu \in \mathcal{M}_\sigma(X)} P_\phi(\mu) .$$

In statistical physics, equilibrium states correspond with probability measures on the state space that minimize the Gibbs free energy of the system. Up to a multiplicative constant, the free energy of an invariant measure corresponds with the negative pressure, and so we similarly define an equilibrium state as follows: 

\begin{defn} We say $\mu \in M_\sigma$ is an {\bf equilibrium state} for $\phi$ if it attains the variational principle supremum, i.e.,  
$$P_{top}(\phi) = h(\mu) + \int \phi d\mu .$$
\end{defn}

When $h$ is upper semicontinuous (as in the case for expansive dynamical systems), we know the collection of equilibrium states for a given $\phi$ is non-empty, compact under the weak-$*$ topology, and convex, where the extreme points are exactly the ergodic equilibrium states.

\subsection{Equilibrium measures and the Gibbs property}

A rich theory has developed around relating equilibrium states and measures with the Gibbs property (for some definition of Gibbs property). In the classical results of Dobru\v sin, Lanford, and Ruelle, the Gibbs property can be defined in terms of conditional probabilities as follows: 

\begin{defn}\label{dlr gibbs} For a subshift $X \subset \mathcal{A}^{\Z^d}$ and a potential $\phi \in C(X)$, we say a measure $\mu$ is {\bf Gibbs} for $\phi$ if for all $F \Subset \Z^d$, for all $w \in L_F(X)$, and for $\mu$-almost every $\eta \in \mathcal{A}^{\Z^d \backslash F}$ such that $w \eta \in X$, 
$$\mu( w || \eta ) = \frac{ exp ( \phi( w \eta )) }{\sum_{v \in L_F(X)} exp( \phi( v \eta )) \cdot 1_X(v\eta)}$$
\end{defn}

In other words, we say a measure is Gibbs for $\phi$ when the probability of a local configuration, $w$, given a background configuration, $\eta$, can be computed in the typical way one computes Gibbs measures in the finite case. The results of Dobru\v sin combined with those of Lanford-Ruelle show equivalence to the Gibbs property and being an equilibrium state under certain assumptions on subshift. 


The following theorem involves a technical condition called Property (D), which is described in \cite{D}.

\begin{fact}[Dobru\v sin \cite{D}] Let $X \subset \mathcal{A}^{\Z^d}$ be a subshift satisfying Property (D) and $\phi \in C(X)$ be a potential with $d$-summable variation. If $\mu \in \mathcal{M}_\sigma(X)$ is an invariant probability measure that is Gibbs for $\phi$, then $\mu$ is an equilibrium state for $\phi$. 
\end{fact}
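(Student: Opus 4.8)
\emph{Proof proposal.} The variational principle of Ollagnier--Pinchon already gives $h(\mu)+\int\phi\,d\mu \le P_{top}(\phi)$, so the whole content lies in the reverse inequality; proving it shows that $\mu$ attains the supremum and is therefore an equilibrium state. The plan is to compute the left-hand side along a fixed tempered F\o lner sequence $\{F_n\}$ and to compare it with the expansive formula for $P_{top}(\phi)$. Since $X$ is expansive the single-site partition is generating, so $h(\mu) = \lim_n |F_n|^{-1} H_\mu(\mathcal{P}_{F_n})$, where $\mathcal{P}_{F_n}$ denotes the partition of $X$ into $F_n$-cylinders; and $\sigma$-invariance gives the exact identity $\int \phi_{F_n}\,d\mu = |F_n|\int\phi\,d\mu$. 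Hence it suffices to prove
$$\liminf_n |F_n|^{-1}\Big[ H_\mu(\mathcal{P}_{F_n}) + \int \phi_{F_n}\,d\mu \Big] \;\ge\; P_{top}(\phi) = \lim_n |F_n|^{-1}\log \sum_{w \in L_{F_n}(X)} \exp\Big( \sup_{x \in [w]} \phi_{F_n}(x) \Big).$$

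First I would pass from the full finite-volume entropy to the entropy conditioned on the exterior. Writing $\mathcal{F}_{F_n^c}$ for the $\sigma$-algebra generated by the coordinates outside $F_n$, the elementary bound $H_\mu(\mathcal{P}_{F_n}) \ge H_\mu(\mathcal{P}_{F_n} \mid \mathcal{F}_{F_n^c})$ lets me replace the bracket by its conditional analogue at the cost of only a one-sided estimate. The advantage is that the conditional object is computed \emph{exactly} by the Gibbs (DLR) property: since $H_\mu(\mathcal{P}_{F_n}\mid\mathcal{F}_{F_n^c}) = \int -\log \mu(x_{F_n} \,\|\, x_{F_n^c})\,d\mu$, substituting the explicit conditional probabilities $\mu(w\,\|\,\eta)$ turns the bracket into the $\mu$-average over exteriors $\eta$ of a conditional log-partition function $\log \sum_v \exp(\phi_{F_n}(v\eta)) \, 1_X(v\eta)$, up to the discrepancy between the Birkhoff sum $\phi_{F_n}$ and the relative-energy cocycle $\sum_{g \in G} \big[ \phi(\sigma_g(w\eta)) - \phi(\sigma_g(v\eta)) \big]$ that actually governs the DLR ratios $\mu(w\,\|\,\eta)/\mu(v\,\|\,\eta)$. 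This identity is the finite-volume manifestation of the classical Gibbs variational lemma, that the Boltzmann distribution maximizes entropy plus mean energy with maximal value the log-partition function.

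The remaining task, which I expect to be the main obstacle, is the passage to the F\o lner limit, and it is here that both hypotheses enter. Because $\phi$ is not local, $\phi_{F_n}(x)$ depends on coordinates of $x$ outside $F_n$ through terms $\phi(\sigma_g x)$ with $g$ near the boundary $\partial_H(F_n)$; the $d$-summable variation of $\phi$ is precisely what bounds the total such dependence, together with the gap between $\phi_{F_n}$ and the true cocycle, by a quantity of order $|\partial_H(F_n)|$. The F\o lner property forces this to be $o(|F_n|)$, so these boundary corrections disappear after normalizing by $|F_n|^{-1}$. Finally I would invoke Property (D) to compare, for $\mu$-typical exteriors $\eta$, the conditional partition functions in the DLR denominators with the free-boundary partition function $\sum_w \exp(\sup_{[w]}\phi_{F_n})$ defining the pressure: Property (D) ensures that the fillings summed over stay realizable in $X$ uniformly in the boundary condition, so the two normalizations agree up to a subexponential factor. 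Assembling these estimates, $|F_n|^{-1}$ times the conditional bracket converges to $P_{top}(\phi)$, and the conditioning inequality then yields $h(\mu)+\int\phi\,d\mu \ge P_{top}(\phi)$, hence equality.
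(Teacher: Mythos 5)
The paper does not prove this statement; it is quoted as background (Theorem 2.3 in the numbering of the source, attributed to Dobru\v sin \cite{D}) and used only as context for the Lanford--Ruelle direction, so there is no in-paper argument to compare against. Judged on its own, your sketch follows the standard route for the ``Gibbs implies equilibrium'' direction and correctly identifies where each hypothesis enters: the reduction via the variational principle to the lower bound $h(\mu)+\int\phi\,d\mu\ge P_{top}(\phi)$; the inequality $H_\mu(\mathcal{P}_{F_n})\ge H_\mu(\mathcal{P}_{F_n}\mid\mathcal{F}_{F_n^c})$ so that the DLR kernels can be substituted; the finite-volume Gibbs variational lemma turning the conditional bracket into $\int\log Z_{F_n}(\eta)\,d\mu$; $d$-summable variation controlling the gap between the Birkhoff sum and the relative Hamiltonian by $o(|F_n|)$; and Property (D) supplying the crucial \emph{lower} bound on the constrained partition function $Z_{F_n}(\eta)$ in terms of the free one (the upper bound is trivial since the constrained sum is a sub-sum, so you should make explicit that only the lower bound is at stake). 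Two points deserve care if you were to write this out: the paper's displayed Gibbs formula uses $\phi(w\eta)$ where the actual DLR ratios are governed by the cocycle $\sum_{g}\bigl(\phi(\sigma_g(w\eta))-\phi(\sigma_g(v\eta))\bigr)$, a discrepancy you flag but would need to resolve precisely; and the boundary estimate ``of order $|\partial_H(F_n)|$'' is correct for boxes in $\Z^d$ with $d$-summable variation but requires the summation-by-shells computation to justify. As a proof outline it is sound; as written it defers all of the quantitative estimates, which is where the actual work of \cite{D} lies.
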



\begin{fact}\label{lr theorem}[Lanford-Ruelle \cite{LR}] Let $X \subset \mathcal{A}^{\Z^d}$ be a subshift of finite type and $\phi \in C(X)$ be a potential with $d$-summable variation. If $\mu$ is an equilibrium state for $\phi$, then $\mu$ is Gibbs for $\phi$. 
\end{fact}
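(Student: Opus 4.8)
The final statement is the classical Lanford--Ruelle theorem, so the plan is to run the standard relative-entropy resampling argument, exploiting the subshift-of-finite-type structure to make block resampling a well-defined local operation and $d$-summable variation to keep the inevitable long-range interactions negligible. The whole proof is a contradiction against the variational principle $P_{top}(\phi)=\sup_{\nu}\big(h(\nu)+\int\phi\,d\nu\big)$: assuming $\mu$ is an equilibrium state that violates the DLR equations of Definition \ref{dlr gibbs}, I would construct a $\sigma$-invariant measure $\tilde\mu$ with $h(\tilde\mu)+\int\phi\,d\tilde\mu>P_{top}(\phi)$, which is impossible.

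The engine is an elementary finite-dimensional inequality. Fix a finite box $\Lambda\Subset\Z^d$ and an exterior $\eta$ admitting at least one legal block, and let $\gamma_\Lambda(\cdot\mid\eta)$ be the Gibbs specification, i.e.\ the probability vector on legal blocks pinned down by the relative energies
$$\log\frac{\gamma_\Lambda(v\mid\eta)}{\gamma_\Lambda(v'\mid\eta)}=\sum_{g\in\Z^d}\Big(\phi(\sigma_g(v\eta))-\phi(\sigma_g(v'\eta))\Big),$$
the sums converging because $v\eta$ and $v'\eta$ differ only on $\Lambda$ and $\phi\in SV(X)$. Nonnegativity of the Kullback--Leibler divergence then says that, among all probability vectors $p$ on the legal blocks, the functional $p\mapsto H(p)+\sum_v p(v)\,\Phi_\Lambda(v\mid\eta)$ (with $\Phi_\Lambda$ the relative energy against a fixed reference block) is \emph{uniquely} maximized at $p=\gamma_\Lambda(\cdot\mid\eta)$. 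The finite-type hypothesis is exactly what guarantees that the set of legal blocks, and hence $\gamma_\Lambda$, depends on $\eta$ only through a bounded window around the boundary $\partial\Lambda$, so that resampling the $\Lambda$-block of a point according to $\gamma_\Lambda$ stays in $X$ and is genuinely local.

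Now suppose $\mu$ is not Gibbs. Then there are a finite $\Lambda$ and a positive-$\mu$-measure set of exteriors $\eta$ on which the conditional law $\mu(\cdot\mid\mathcal{F}_{\Lambda^c})$ differs from $\gamma_\Lambda(\cdot\mid\eta)$, and by the strict case of the inequality, replacing $\mu$'s conditional law by $\gamma_\Lambda$ strictly increases the local conditional-entropy-plus-energy contribution. To turn this local gain into a gain in pressure \emph{per unit volume} without destroying invariance, I would resample simultaneously on a \emph{sparse} disjoint family of translates $\{\Lambda c : c\in C\}$ --- a sublattice in the $\Z^d$ setting, the exact analogue of the left-$\Lambda$-sparse sets and exact tilings that the body of this paper builds from \cite{dhz} --- and then symmetrize over a fundamental domain to obtain a $\sigma$-invariant $\tilde\mu$. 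Summing the per-block strict gains over a F\o lner box $F_n$ and dividing by $|F_n|$ yields $h(\tilde\mu)+\int\phi\,d\tilde\mu>h(\mu)+\int\phi\,d\mu=P_{top}(\phi)$, the required contradiction; a martingale argument over increasing boxes then upgrades consistency with the specification into the DLR equations for every finite shape, which is the Gibbs property.

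The main obstacle is controlling the long-range interactions created by resampling. Since $\phi$ is not of finite range, resampling even a single block perturbs the energies $\phi\circ\sigma_g$ at all sites, and resampling many translates at once introduces cross-terms between them; $d$-summable variation is precisely the quantitative input that forces these perturbations to decay fast enough to contribute only $o(|F_n|)$, so that the aggregate strict gain survives normalization and remains a positive multiple of $|C\cap F_n|$. Making this uniform --- choosing $C$ sparse enough that all boundary and cross-terms are dominated by the per-block gain, while recovering exact $\sigma$-invariance in the limit --- is where the real work lies, and it is exactly the interplay between sparseness and summable variation that the remainder of this paper exploits for its own main results.
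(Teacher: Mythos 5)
This statement is not proved in the paper at all: it is the classical Lanford--Ruelle theorem, quoted as a known fact and attributed to \cite{LR}, so there is no internal proof to compare your argument against. Judged on its own terms, your plan is the standard one --- the same relative-entropy/resampling strategy used in the original proof and in the amenable-group generalizations the paper cites as \cite{dlramenable1} and \cite{dlramenable2}: assume the DLR equation fails on a finite shape $\Lambda$ for a positive-measure set of exteriors, use strict positivity of the Kullback--Leibler divergence to gain conditional entropy plus energy by resampling with the Gibbs kernel (the SFT hypothesis being what makes the resampled point legal), spread the gain over a sparse family of translates, and contradict the variational principle. All of that is correctly identified, including the two places where the hypotheses enter.

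The gap is that the decisive step is deferred rather than carried out. Declaring that resampling on a sparse sublattice and symmetrizing yields a $\sigma$-invariant $\tilde\mu$ with $h(\tilde\mu)+\int\phi\,d\tilde\mu$ strictly larger is precisely the content of the theorem, and it is not automatic: (i) entropy is a global, per-site limit, so you must show via the chain rule and subadditivity that the sum of local conditional-entropy gains survives into $h(\tilde\mu)$ rather than being absorbed by correlations the resampling creates; (ii) when many blocks are resampled simultaneously, the ``exterior'' of each block contains other resampled blocks, so the per-block gain is no longer the one computed under $\mu$, and you must show the cross-terms are $o(1)$ per block using $d$-summable variation and sparseness --- you name this obstacle but do not resolve it; (iii) the symmetrized measure is a convex combination of sublattice-invariant measures, and you need affinity of $h$ and $\int\phi\,d(\cdot)$ over that larger class to conclude. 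Also, a small point: no martingale upgrade is needed at the end, since the contradiction is derived separately for each finite shape on which the DLR equation fails. As it stands this is a correct roadmap to the known proof, not a proof.
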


Subshifts of finite type, SFTs, are an important and well studied class of subshifts that are not defined here. Analogous results have since been shown in the countable amenable subshift setting \cite{dlramenable1} \cite{dlramenable2}. Note that these statements rely on relatively strong assumptions on both the subshift and the potential. When no assumptions are made of the potential, little can be said about the equilibrium state. In fact, by upper semicontinuity of the entropy map, it can be shown that for any subshift $X \subset \mathcal{A}^G$, and any ergodic state $\mu \in \mathcal{M}_{\sigma}(X)$, there exists a potential $\phi \in C(X)$ for which $\mu$ is the unique equilibrium state \cite{jenkinson}. It is therefore quite natural to retain some regularity assumptions on the class of potentials under consideration.

\section{Proof of Theorem \ref{sv theorem}}

For our results, we will impose a natural regularity assumption on our potential $\phi$, but impose no restriction on the subshift $X \subset \mathcal{A}^G$. Our proof of Theorem \ref{sv theorem} will be adapted from the proof technique of Garcia-Ramos and Pavlov in \cite{GRP}, which involves replacing $v$ with $w$ along a sufficiently sparse and regular grid in $G$. Lemma \ref{almost partition} allows us to construct a partition of $G$ from which we may choose the appropriate grid. We will then prove a few technical lemmas, and finally we will show our main result and conclude with a few notable corollaries.

\subsection{Sufficiently sparse almost partitions}

The following lemma allows us to generate a finite $\epsilon$-almost partition of $G$ where each part is sufficiently sparse relative to some fixed $F \Subset G$. 

\begin{lemma}\label{almost partition} For any $F \Subset G$, $\epsilon > 0$, and F\o lner sequence $\{ F_n \}$, there exists a finite collection $\mathcal{P} = \{ P_i : 1 \leq i \leq N \}$ of pairwise disjoint, left $F$-sparse subsets of $G$ and a subsequence $\{ F_{n_k} \}$ such that 
$$\liminf_{k \rightarrow \infty} \frac{| \bigcup_{P \in \mathcal{P}}P \cap F_{n_k}|}{|F_{n_k}|} \geq 1-\epsilon .$$
\end{lemma}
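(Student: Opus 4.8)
The plan is to use the exact tiling result of Downarowicz, Huczek, and Zhang (Theorem 4.3 of \cite{dhz}) to produce a single exact tiling $\mathcal{T} = \{T_i = S_i C(S_i) : 1 \le i \le k\}$ whose shapes are all right $(K, \delta)$-invariant for a suitable $K$ and small $\delta$, and then to extract the left $F$-sparse pieces $P_i$ directly from the centers of each tile. First I would choose $K$ and $\delta$ in terms of $F$ and $\epsilon$ so that removing a boundary collar from each shape still leaves almost all of $G$ covered; specifically, take $K = F^{-1}F$ (or $F^\pm$) so that right $(K,\delta)$-invariance of each shape $S_i$ forces its right $K$-interior $Int_K(S_i)$ to be large, $|Int_K(S_i)| \ge (1-\delta)|S_i|$. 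The key geometric observation is that if two centers $c, c' \in C(S_i)$ are distinct, then by exactness $S_i c \cap S_i c' = \emptyset$, and this disjointness of the \emph{translated shapes} is what I will leverage to get $F$-sparseness of a thinned-out set of centers.

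The main construction: for each shape index $i$, I would define $P_i \subset C(S_i)$ to be the set of centers, but I cannot simply take all centers since $\{c\}F$ for nearby centers may overlap even when $S_i c$ are disjoint. The cleaner route is to define $P_i$ not from the raw centers but by selecting, inside each tile $S_i c$, a single well-interior point (or a sparse sub-grid) so that the $F$-translates stay within the disjoint tiles. Concretely, I would set $P_i = \{ \text{the marker point } p_{c} : c \in C(S_i) \}$ where $p_c$ is chosen so that $p_c F \subset S_i c$; this is possible precisely because $S_i$ is right $(F^{-1}F, \delta)$-invariant, guaranteeing a large interior from which to pick markers whose right $F$-translate stays inside the single tile $S_i c$. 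Since distinct tiles $S_i c$ are disjoint (exactness) and distinct shape-tiles $T_i, T_j$ are disjoint (tiling), the collection $\bigcup_i P_i$ is automatically left $F$-sparse, and making the $P_i$ pairwise disjoint is immediate. The density bound then follows: the markers account for a fixed proportion $\ge (1-\delta)$ of the mass of each tile (choosing markers to fill the interior $Int_F(S_i c)$ up to an $F$-sparse packing costs only a $|F|^{-1}$-type factor, which I can absorb by iterating the selection or by a volume argument).

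I would finish by invoking the definition of pressure/F\o lner convergence: since the tiling covers $G$ exactly and the markers occupy a $(1-\epsilon)$ proportion of each tile uniformly, a standard averaging argument over any F\o lner sequence $\{F_n\}$ shows that $|(\bigcup P_i) \cap F_n| / |F_n| \to \ge 1 - \epsilon$ along a subsequence; the subsequence arises because the tiling's density relative to $F_n$ converges only in the F\o lner limit, and boundary effects $|\partial_{S}(F_n)|/|F_n| \to 0$ must be controlled, which is where passing to a subsequence becomes convenient. \textbf{The main obstacle} I anticipate is the marker-selection step: reconciling the two different notions of disjointness --- exactness gives disjoint \emph{shapes} $S_i c$, but $F$-sparseness is a statement about disjoint right $F$-translates $pF$ of the selected points --- requires that each chosen marker sit deep enough in its tile that its entire $F$-translate is swallowed by that one tile, and simultaneously that enough such markers exist to preserve density. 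Balancing these (interior depth versus packing density) is the crux, and the right $(K,\epsilon)$-invariance of the shapes supplied by \cite{dhz} is exactly the tool that makes it work.
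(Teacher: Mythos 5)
Your overall strategy --- invoke the Downarowicz--Huczek--Zhang exact tiling theorem with shapes that are sufficiently $F$-invariant, pass to the $F$-interiors of the tiles, and read the sparse sets off the tiling --- is exactly the route the paper takes (the paper only sketches this lemma, citing \cite{dhz} and \cite{ornsteinweiss}). However, there is a genuine gap at precisely the step you flag as the crux, and as written your construction cannot reach the stated density. If $P$ is left $F$-sparse, the translates $pF$, $p \in P$, are pairwise disjoint sets of size $|F|$, so for any finite window $T$ we get $|P \cap T| \leq |TF|/|F|$, which for suitably invariant windows is about $|T|/|F|$: a single left $F$-sparse set has density at most roughly $1/|F|$. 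You construct one sparse set $P_i$ per shape index, so $N = k$ is the number of shapes and $\bigl|\bigcup_i P_i \cap F_n\bigr|/|F_n|$ is at most about $k/|F|$, far below $1-\epsilon$ once $|F|$ is large. Equivalently, your claim that the markers of a single $P_i$ "account for a proportion $\geq (1-\delta)$ of each tile" is incompatible with $P_i$ being $F$-sparse inside that tile; the "$|F|^{-1}$-type factor" you hope to absorb is not absorbable --- it is the unavoidable density ceiling of one sparse set.

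The fix, and what the paper's outline actually does, is to let $N$ grow with $|F|$: index the members of $\mathcal{P}$ not by shape but by pairs (shape, location within the $F$-interior of that shape). For each shape $S_i$ and each $s$ in the appropriate $F$-interior of $S_i$, let $P_{i,s}$ consist of the copy of the location $s$ in each tile of shape $S_i$ (one point per tile). Each $P_{i,s}$ is left $F$-sparse because the $F$-translates of its points land in distinct tiles, which are disjoint by exactness; and the union over all $(i,s)$ recovers the entire union of the $F$-interiors of the tiles, which has density at least $1-\epsilon$ by the $(K,\delta)$-invariance of the shapes. This gives $N = \sum_i |Int_F(S_i)|$ sparse sets, consistent with the $1/|F|$ ceiling per set. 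Your parenthetical about "iterating the selection" gestures at this, but the iteration must be performed on the order of $|S_i|$ times per shape and each iterate kept as a \emph{separate} element of $\mathcal{P}$, not merged into one $P_i$. One further point to watch: in a nonabelian group, if the tile is $S_i c$ and $p = sc$, then $pF \subset S_i c$ requires $s c F c^{-1} \subset S_i$, not merely $sF \subset S_i$, so the handedness of the tiling convention and of the interior must be matched; this is bookkeeping rather than an obstruction, but it is not automatic and should be stated.
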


In lieu of a complete and technical proof, we provide an outline of how this result can be shown. The result can be viewed as a weakened reformulation of a quasitiling result of Ornstein and Weiss in \cite{ornsteinweiss}, or an application of the exact tiling results in \cite{dhz}. In essence, one can construct a quasitiling (or an exact tiling) of $G$ using a finite collection of shapes that are sufficiently $F$-invariant. The $F$-invariance ensures that we can consider only the $F$-interiors of these shapes and maintain a $(1-\epsilon)$-covering of $G$. We then let $P \in \mathcal{P}$ represent all the shifts of a particular location of an element in a specified shape. 
This allows us to ensure that $\mathcal{P}$ is left $F$-sparse since we have restricted to the $F$-interiors of the relevant shapes.

\subsection{Subshift lemmas}

We begin by describing our regularity constraints imposed on the potential $\phi$. 

\begin{defn} The {\bf $F$-variation} of $\phi$ for any $F \subset G$ is defined as
$$Var_{F}(\phi) = \sup \{ \phi(x) - \phi(y) : x, y \in X \text{ and } x_F = y_F \} . $$
\end{defn}

\begin{defn} We say $\{ E_n \}$ is an {\bf exhaustive sequence} in $G$ if $E_1 \subset E_2 \subset \dots$ and $G = \bigcup_{n \in \N} E_n$. 
\end{defn}

\begin{defn} We say $\phi$ has {\bf summable variation according to the exhaustive sequence} $\{ E_n \}$ if 
$$\sum_{n \in \N } |E_{n+1}^{-1} \backslash E_{n}^{-1} | \cdot Var_{ E_n }(\phi) < \infty . $$
\end{defn}

Additionally, we will say $\phi \in C(X)$ has {\bf summable variation} if $\phi$ has summable variation according to some exhaustive sequence. 

For a fixed exhaustive sequence $\{ E_n \}$, we define $SV_{\{ E_n \}}(X)$ to be the collection of all potentials with summable variation according to $\{ E_n \}$. We also let $SV(X) = \bigcup_{\{ E_n \}} SV_{\{ E_n \}}(X)$ denote the collection of all potentials with summable variation according to some exhaustive sequence. 

It is worth noting here that when $G = \Z^d$, we can take $E_n = \{ k \in \Z^d  : |k_i| \leq n \}$ and summable variation according to this sequence corresponds to $d$-summable variation in the typical sense.

\begin{defn} For an exhaustive sequence $\{ E_n \}$, we define the {\bf summable variation norm} on $SV_{\{ E_n \}}(X)$ by: 
$$|| \phi ||_{SV_{\{ E_n\}}} = 2 |E_1| \cdot || \phi||_\infty + \sum_{n \in \N } |E_{n+1}^{-1} \backslash E_{n}^{-1} | \cdot Var_{ E_n }(\phi) . $$
\end{defn}

\begin{obs} If $\phi$ has summable variation according to some exhaustive sequence $\{ E_n \}$, then for any $F \Subset G$ such that $e \in F$, $\phi$ has summable variation according to exhaustive sequence $\{ E_n F \}$. 
\end{obs}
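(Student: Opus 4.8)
The plan is to verify the two requirements in the definition of summable variation directly: first that $\{ E_n F \}$ is an exhaustive sequence, and second that the associated series converges. Exhaustiveness is immediate. The nesting $E_n F \subset E_{n+1} F$ follows from $E_n \subset E_{n+1}$; and since $e \in F$ we have $E_n \subset E_n F$, so that $G = \bigcup_n E_n \subset \bigcup_n E_n F \subset G$, which forces $\bigcup_n E_n F = G$. The substance therefore lies in controlling the series $\sum_n |(E_{n+1}F)^{-1} \backslash (E_n F)^{-1}| \cdot Var_{E_n F}(\phi)$, and my strategy is to bound each of its terms by $|F|$ times the corresponding term of the original convergent series for $\{ E_n \}$.

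This reduces to two termwise estimates. For the variation factor, I would first record that $Var_{\bullet}(\phi)$ is monotone decreasing under inclusion: if $A \subset B$, then the constraint $x_B = y_B$ is stronger than $x_A = y_A$, so the supremum defining $Var_B(\phi)$ ranges over a smaller set and $Var_B(\phi) \leq Var_A(\phi)$. Applying this with $E_n \subset E_n F$ (using $e \in F$ once more) gives $Var_{E_n F}(\phi) \leq Var_{E_n}(\phi)$. For the counting factor, writing $(E_n F)^{-1} = F^{-1} E_n^{-1}$, the key claim is the set inclusion $F^{-1} E_{n+1}^{-1} \backslash F^{-1} E_n^{-1} \subset F^{-1}( E_{n+1}^{-1} \backslash E_n^{-1} )$. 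I would prove this by fixing, for $g$ in the left-hand side, a representation $g = f^{-1} h$ with $f \in F$ and $h \in E_{n+1}^{-1}$; were $h \in E_n^{-1}$ we would have $g \in F^{-1} E_n^{-1}$, contradicting $g \notin F^{-1} E_n^{-1}$, so in fact $h \in E_{n+1}^{-1} \backslash E_n^{-1}$ and $g \in F^{-1}( E_{n+1}^{-1} \backslash E_n^{-1} )$. Since $|F^{-1} S| \leq |F| \cdot |S|$, this yields $|(E_{n+1}F)^{-1} \backslash (E_n F)^{-1}| \leq |F| \cdot |E_{n+1}^{-1} \backslash E_n^{-1}|$.

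Combining the two estimates termwise gives
$$\sum_n |(E_{n+1}F)^{-1} \backslash (E_n F)^{-1}| \cdot Var_{E_n F}(\phi) \leq |F| \sum_n |E_{n+1}^{-1} \backslash E_n^{-1}| \cdot Var_{E_n}(\phi) < \infty ,$$
where the finiteness of the right-hand series is precisely the hypothesis $\phi \in SV_{\{ E_n \}}(X)$, and hence $\phi \in SV_{\{ E_n F \}}(X)$. The one genuinely nontrivial step, and the place I would be most careful, is the set inclusion for the inverse products: it is the only point where the interaction between left multiplication by $F^{-1}$ and the set difference of the $E_n^{-1}$'s must be controlled, and the contraposition argument above is what makes it go through cleanly.
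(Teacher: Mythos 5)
Your proof is correct and follows essentially the same route as the paper's: exhaustiveness from $e \in F$, monotonicity of $Var$ under $E_n \subset E_nF$, and a termwise bound of the counting factor by $|F|\cdot|E_{n+1}^{-1}\setminus E_n^{-1}|$. You are in fact somewhat more careful than the paper's own one-line display, which drops the inverses from the definition; your inclusion $F^{-1}E_{n+1}^{-1}\setminus F^{-1}E_n^{-1}\subset F^{-1}(E_{n+1}^{-1}\setminus E_n^{-1})$ is exactly the detail needed to make that display rigorous.
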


\begin{proof} First let $H \Subset G$ and note $ HF^{-1}$ is also a finite set. We therefore have some $N \in \N$ such that for all $n \geq N$, $ H F^{-1} \subset E_n$ and so $H \subset E_n F$. It immediately follows that $\{ E_n F \}$ is an exhaustive sequence. 

$\;$

Since $e \in F$ by assumption, we have $E_n \subset E_n F$. We now see that 
$$\sum_{n \in \N} |E_{n+1} F \backslash E_n F| Var_{E_nF}(\phi) \leq |F| \sum_{n \in \N} |E_{n+1}  \backslash E_n | Var_{E_n}(\phi) < \infty . $$
\end{proof}

We now restate relevant definitions and lemmas from Garcia-Ramos and Pavlov in \cite{GRP}. 

\begin{defn} For $v, u \in L(X)$ we define $O_v(u) = \{ g \in G : \sigma_g(u)_F  = v \}$. 
\end{defn}

In particular, $O_v(u)$ represents the indices where $v$ occurs in $u$ and can be read as "occurrences of $v$ in $u$." We will now define a replacement function that for a given finite configuration $u$, replaces occurrences of $v$ with $w$ in specified locations.

\begin{defn} Let $F, H \Subset G$ and fix any $v, w \in L_F(X)$ and $u \in L_H(X)$. Let $S \subset O_v(u)$ be an $F$-sparse set of occurrences of $v$ in $u$. We now define $R_u^{v \rightarrow w}(S) = u'$ as follows: 
\begin{itemize}
\item For $s \in S$, $f \in F$, let $u'_{sf} = w_f$, and 
\item for all other $g \in H \backslash SF$, let $u'_g = u_g$. 
\end{itemize}
\end{defn}

Since $S$ is an $F$-sparse subset of $O_v(u)$, we know $u'$ is well defined and uniquely determined. Note here that $u'$ is exactly the configuration obtained by replacing $v$ with $w$ in the $S$-locations. 

We remind the reader that for any finite configuration $v \in L_F(X)$, the extender set of $v$ is defined as $E_X(v) = \{ \eta \in \mathcal{A}^{F^c} : v \eta \in X \}$. Note here whenever $E_X(v) \subset E_X(w)$, then $R_u^{v \rightarrow w}(S) \in L(X)$ for any left $F$-sparse set $S \subset O_v(u)$.

%

\begin{lemma}\label{lemma 4.2}[Lemma 4.2 \cite{GRP}] For any $F$, $v, w \in L_F(X)$ with $v \neq w$, and left $F$-sparse set $T \subset O_v(u)$, $R_u^{v \rightarrow w}$ is injective on subsets of $T$. 
\end{lemma}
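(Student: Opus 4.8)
The plan is to argue directly from the definitions and show that two distinct subsets of $T$ produce configurations that disagree at some coordinate. Suppose $S_1, S_2 \subset T$ with $S_1 \neq S_2$, and write $u^{(1)} = R_u^{v \rightarrow w}(S_1)$ and $u^{(2)} = R_u^{v \rightarrow w}(S_2)$. Since $S_1 \neq S_2$, their symmetric difference is nonempty, so after relabeling we may fix some $s \in S_1 \setminus S_2$. The goal is then to exhibit a coordinate in the window $sF$ at which $u^{(1)}$ and $u^{(2)}$ disagree.

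First I would read off the value of $u^{(1)}$ on $sF$. Because $s \in S_1$, the first clause in the definition of the replacement function gives $u^{(1)}_{sf} = w_f$ for every $f \in F$, i.e. $\sigma_s(u^{(1)})_F = w$. (Here $sF \subset H$ since $s \in O_v(u)$ and $u \in L_H(X)$, so these coordinates are indeed in the domain.)

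The crux is to show that the window $sF$ is untouched by the replacements carried out at the occurrences in $S_2$, so that $u^{(2)}$ still displays $v$ there; this is exactly where left $F$-sparseness of $T$ enters. Since $s \in T$ and every $s' \in S_2 \subset T$ satisfies $s' \neq s$ (as $s \notin S_2$), the sparseness hypothesis yields $sF \cap s'F = \emptyset$ for all $s' \in S_2$, equivalently $sF \cap S_2 F = \emptyset$. Hence every coordinate $sf$ with $f \in F$ lies in $H \setminus S_2 F$, and the second clause of the definition gives $u^{(2)}_{sf} = u_{sf}$. But $s \in T \subset O_v(u)$ means $\sigma_s(u)_F = v$, so $u^{(2)}_{sf} = v_f$ for all $f \in F$, i.e. $\sigma_s(u^{(2)})_F = v$.

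Comparing the two reads, $\sigma_s(u^{(1)})_F = w \neq v = \sigma_s(u^{(2)})_F$ since $v \neq w$ by hypothesis, so $u^{(1)} \neq u^{(2)}$, which establishes injectivity. I expect the only genuine subtlety to be the sparseness step: one must ensure that no occurrence in $S_2$, even one located near $s$, can alter any coordinate inside $sF$, and the disjointness of the $F$-translates over the left $F$-sparse set $T$ is precisely what rules this out. Everything else is bookkeeping against the two clauses defining $R_u^{v \rightarrow w}$.
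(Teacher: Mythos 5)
Your proof is correct and is exactly the intended argument: the paper states this lemma without proof, citing Lemma 4.2 of \cite{GRP}, and your reasoning (read off $w$ on the window $sF$ for $s \in S_1 \setminus S_2$, then use left $F$-sparseness of $T$ to see that $S_2$'s replacements cannot touch $sF$, so the other configuration still shows $v$ there) is the standard one. No gaps.
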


\begin{lemma}\label{lemma 4.3}[Lemma 4.3 \cite{GRP}] For any $F$ and $v, w \in L_F(X)$, any left $F$-sparse set $T \subset O_v(u)$, any $u'$ and any $m \leq |T \cap O_w(u')|$, 
$$| \{ (u, S) : S \text{ is left $F$-sparse}, \; |S| = m, \; S \subseteq T, \; u' = R_u^{v\rightarrow w}(S) \} | \leq  {|T \cap O_w(u')| \choose m} . $$
\end{lemma}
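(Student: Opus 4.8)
The plan is to exhibit an injection from the set of pairs being counted into the family of $m$-element subsets of $T \cap O_w(u')$, whose cardinality is exactly $\binom{|T \cap O_w(u')|}{m}$. The natural candidate is the forgetful map $(u,S)\mapsto S$; the entire content of the lemma is that this map is well-defined in the sense that its image lands among subsets of $T \cap O_w(u')$, and that it is injective. Fixing $u'$ throughout, I would therefore organize the argument as two short verifications.

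First I would check that the image lands in $T \cap O_w(u')$. Take any counted pair $(u,S)$, so that $u' = R_u^{v\rightarrow w}(S)$, and let $s \in S$. By the definition of the replacement function, $u'_{sf} = w_f$ for every $f \in F$; recalling that $\sigma_s(u')_f = u'_{sf}$, this says precisely $\sigma_s(u')_F = w$, i.e. $s \in O_w(u')$. Since also $S \subseteq T$ by hypothesis, every $S$ arising from a counted pair is an $m$-element subset of $T \cap O_w(u')$. Next I would establish injectivity, which amounts to recovering the first coordinate $u$ from the fixed datum $u'$ together with $S$. Because $S$ is left $F$-sparse, the translates $\{sF : s \in S\}$ are pairwise disjoint, so the prescription assigning $v_f$ to the coordinate $sf$ for each $s \in S$ and $f \in F$, and agreeing with $u'$ at every remaining coordinate, is unambiguous and determines at most one configuration. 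This reconstructed configuration is indeed $u$: on $SF$ we have $\sigma_s(u)_F = v$ since $S \subseteq O_v(u)$, while off $SF$ the replacement left $u$ untouched, so $u$ and $u'$ agree there. Hence $u$ is a function of $(u',S)$, two counted pairs with the same $S$ coincide, and the map $(u,S)\mapsto S$ is injective. This is the counting counterpart of the injectivity recorded in Lemma \ref{lemma 4.2}. Combining the two steps, $(u,S)\mapsto S$ injects the counted pairs into the $m$-element subsets of $T \cap O_w(u')$, which yields the stated bound $\binom{|T \cap O_w(u')|}{m}$.

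I do not anticipate a serious obstacle; the argument is essentially bookkeeping of which coordinates $R_u^{v\rightarrow w}$ alters. The only point demanding care is the well-definedness of the inverse reconstruction, and this is exactly where left $F$-sparseness of $S$ is indispensable: without the disjointness of the translates $sF$ the prescription recovering $u$ could be internally inconsistent. I would therefore invoke sparseness explicitly at that step, and emphasize that we never claim every $m$-subset of $T \cap O_w(u')$ is realized by some pair, only that the realized ones inject into that family, which is all the bound requires.
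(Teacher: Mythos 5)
Your proof is correct; the paper gives no proof of this lemma, simply quoting it from \cite{GRP}. Your argument --- that the forgetful map $(u,S)\mapsto S$ lands in the $m$-element subsets of $T\cap O_w(u')$ (since $R_u^{v\rightarrow w}(S)$ places $w$ at every $s\in S$) and is injective because $u$ is reconstructible from $(u',S)$ by writing $v$ back on the pairwise disjoint translates $sF$ --- is exactly the standard argument behind the cited result, with left $F$-sparseness invoked at the right place.
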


The following lemma will be useful for computing topological pressure. 

\begin{lemma}\label{S_n pressure computation} Let $\mu_\phi$ be an ergodic equilibrium measure for any $\phi \in C(X)$. For any tempered F\o lner sequence $ \{ F_n \}$, if $S_n \subset L_{F_n}(X)$ such that $\mu_\phi(S_n) \rightarrow 1$, then 
$$P_{top}(\phi) = \lim_{n \rightarrow \infty} \frac{1}{|F_n|} \log \left(  \sum_{w \in S_n} \sup_{x \in [w]} exp(\phi_{F_n} (x) ) \right) . $$
\end{lemma}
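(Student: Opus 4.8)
The plan is to prove the two inequalities $\limsup_n \frac{1}{|F_n|}\log Z_n^S \le P_{top}(\phi)$ and $\liminf_n \frac{1}{|F_n|}\log Z_n^S \ge P_{top}(\phi)$ separately, where I abbreviate $Z_n^S = \sum_{w \in S_n}\sup_{x\in[w]}\exp(\phi_{F_n}(x))$. The upper bound is immediate: since $S_n \subseteq L_{F_n}(X)$, the restricted sum $Z_n^S$ is bounded above by the full sum $\sum_{w\in L_{F_n}(X)}\sup_{x\in[w]}\exp(\phi_{F_n}(x))$, whose normalized logarithm converges to $P_{top}(\phi)$ by the expansive pressure formula recorded in Section 3.3. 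So only the lower bound requires genuine work.

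For the lower bound I would combine two pointwise theorems valid along tempered F\o lner sequences for the ergodic measure $\mu_\phi$: the Lindenstrauss pointwise ergodic theorem, giving $\frac{1}{|F_n|}\phi_{F_n}(x)\to\int\phi\,d\mu_\phi$ for $\mu_\phi$-a.e.\ $x$ (using that $\phi$ is continuous on the compact space $X$, hence bounded and integrable), and the Shannon--McMillan--Breiman theorem for amenable groups applied to the generating partition of $X$ by the symbol at the identity, giving $-\frac{1}{|F_n|}\log\mu_\phi([x_{F_n}])\to h(\mu_\phi)$ for $\mu_\phi$-a.e.\ $x$. Adding these and using that $\mu_\phi$ is an equilibrium state, i.e.\ $h(\mu_\phi)+\int\phi\,d\mu_\phi=P_{top}(\phi)$, yields $\frac{1}{|F_n|}\bigl(\phi_{F_n}(x)-\log\mu_\phi([x_{F_n}])\bigr)\to P_{top}(\phi)$ a.e. This is the crux: it converts the equilibrium condition into a pointwise asymptotic equipartition statement linking individual Birkhoff sums to the measures of cylinders.

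To turn this into a bound on $Z_n^S$, fix $\epsilon,\delta>0$ and apply Egorov's theorem to extract a set $A$ with $\mu_\phi(A)>1-\delta$ on which the convergence above is uniform; thus for $n$ large and every $x\in A$ we have $\exp(\phi_{F_n}(x))\ge\mu_\phi([x_{F_n}])\exp\bigl(|F_n|(P_{top}(\phi)-\epsilon)\bigr)$. Because the summand in $Z_n^S$ is a supremum over each cylinder, for each word $w\in S_n$ whose cylinder meets $A$ I can evaluate at any point of $[w]\cap A$ to obtain $\sup_{x\in[w]}\exp(\phi_{F_n}(x))\ge\mu_\phi([w])\exp\bigl(|F_n|(P_{top}(\phi)-\epsilon)\bigr)$. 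Summing over those words and using that the cylinders indexed by $S_n$ have $\mu_\phi$-measure tending to $1$ while the cylinders meeting $A$ have measure at least $1-\delta$, an inclusion--exclusion estimate (the cylinders over the fixed shape $F_n$ partition $X$) shows the relevant words carry $\mu_\phi$-measure at least $1-2\delta$ for large $n$. Hence $Z_n^S\ge (1-2\delta)\exp\bigl(|F_n|(P_{top}(\phi)-\epsilon)\bigr)$, and taking $\frac{1}{|F_n|}\log$, letting $n\to\infty$ so that the term $\frac{1}{|F_n|}\log(1-2\delta)$ vanishes, and finally sending $\epsilon,\delta\to0$, gives the lower bound.

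The main obstacle is the lower bound, and within it the cleanest delicate point is coordinating the two sets of near-full measure---the words in $S_n$ and the words whose cylinders meet the Egorov set $A$---so that their common refinement still has measure close to $1$; the fact that the cylinders sit over the single fixed shape $F_n$ is what reduces this to an inclusion--exclusion computation. The other point requiring care is invoking the pointwise ergodic and Shannon--McMillan--Breiman theorems in the correct amenable-group, tempered-F\o lner generality, which is precisely why temperedness is assumed in the hypothesis.
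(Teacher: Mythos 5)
Your proposal is correct and follows essentially the same route as the paper: the trivial upper bound from $S_n\subseteq L_{F_n}(X)$, then the pointwise ergodic theorem combined with Shannon--McMillan--Breiman along the tempered F\o lner sequence to obtain the a.e.\ statement $|F_n|^{-1}\bigl(\phi_{F_n}(x)-\log\mu_\phi([x_{F_n}])\bigr)\to P_{top}(\phi)$, and finally a collection of words of measure tending to $1$ on which $\sup_{x\in[w]}\exp(\phi_{F_n}(x))\ge\mu_\phi([w])\exp\bigl(|F_n|(P_{top}(\phi)-\epsilon)\bigr)$, intersected with $S_n$ and summed. The only (cosmetic) difference is that the paper avoids Egorov by directly defining the set $T_n$ of words satisfying that inequality and deducing $\mu_\phi(T_n)\to 1$ from the a.e.\ convergence.
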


\begin{proof} Let $\epsilon > 0$ and note by definition of topological pressure above we have 
$$\limsup_{n \rightarrow \infty} \frac{1}{|F_n|} \log \left(  \sum_{w \in S_n} \sup_{x \in [w]} exp(\phi_{F_n} (x) ) \right) \leq   P_{top}(\phi) . $$

For every $n$ define 
$$T_n = \left\{ w \in L_{F_n}(X) : \mu_\phi([w]) < exp \left(  |F_n| \left(\sup_{x \in [w]} \phi_{F_n}(x) - (P_{top}(\phi) - \epsilon) \right) \right) \right\} . $$

Note since $F_n$ is a tempered F\o lner sequence and $\mu_\phi$ is ergodic, we can apply the pointwise ergodic theorem and the Shannon-Macmillan-Breiman theorem proven in \cite{ornsteinweiss} and \cite{Gurevich} to see for $\mu_\phi$-a.e. $x \in X$, 
$$P_{top}(\phi) = h(\mu_\phi) + \int \phi d\mu_\phi = \lim_{n \rightarrow \infty} |F_n|^{-1} \left( \phi_{F_n}(x) - \log \mu_\phi([x_{F_n} ]) \right) . $$

By the definition of $T_n$ it therefore follows that $\mu_\phi( \bigcup_{N \in \N} \bigcap_{n \geq N} T_n) = 1$, and so $\mu_\phi(T_n) \rightarrow 1$. Further, $\mu( S_n \cap T_n) \rightarrow 1$, and by definition of $T_n$: 
$$\sum_{w \in S_n \cap T_n} \sup_{x \in [w]} exp \left( \phi_{F_n}(x) \right) \geq \mu_\phi(S_n \cap T_n) exp \left( |F_n|( P_{top}(\phi) -\epsilon) \right) . $$
By taking sufficiently large $n$ we therefore have 
$$\sum_{w \in S_n } \sup_{x \in [w]} exp \left( \phi_{F_n}(x) \right) \geq 0.5 exp \left( |F_n|( P_{top}(\phi) -\epsilon) \right) . $$
Since $\epsilon > 0$ was arbitrary we are finished. 
\end{proof}

We remind the reader that for $v, w \in L_F(X)$, $\xi_{v, w}: X \rightarrow X$ is the map that swaps $v$ and $w$ in the $F$ location whenever legal. 

\begin{obs}\label{infinite sum bound} If $\phi$ has summable variation according to the exhaustive sequence $\{ E_n \}$, then for any $F \Subset G$, $v, w \in L_F(X)$, and any $x \in [v]$, 
$$\sum_{g \in G} \left| \phi(\sigma_g(x)) - \phi(\sigma_g(\xi_{v, w}(x))) \right|  \leq  |F| \cdot ||\phi||_{SV_{\{ E_n\}}}. $$
\end{obs}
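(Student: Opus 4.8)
The plan is to bound the $g$-th summand by a variation term whose index grows as $g$ moves away from $F$, and then to sum these against the defining series of $||\phi||_{SV_{\{E_n\}}}$. First I would dispose of the trivial case: by the definition of $\xi_{v, w}$, either $\xi_{v, w}(x) = x$, in which case every summand vanishes and there is nothing to prove, or $y := \xi_{v, w}(x) = w x_{F^c}$. In the latter case $x$ and $y$ agree on all of $G \setminus F$, so for each $g \in G$ the translated points $\sigma_g(x)$ and $\sigma_g(y)$ agree at coordinate $h$ unless $gh \in F$; hence they agree on $E_n$ whenever $g E_n \cap F = \emptyset$. A one-line manipulation shows $g E_n \cap F = \emptyset$ is equivalent to $g \notin F E_n^{-1}$, and in that case the definition of $F$-variation yields $|\phi(\sigma_g(x)) - \phi(\sigma_g(y))| \le Var_{E_n}(\phi)$. (Both points lie in $X$ since $X$ is $\sigma$-invariant.)

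Next I would organize the sum according to how deep each $g$ sits in the increasing family $\{ F E_n^{-1} \}$. Since $\{E_n\}$ exhausts $G$, so does $\{F E_n^{-1}\}$, and I would partition $G$ into the innermost piece $F E_1^{-1}$ together with the shells $F E_{n+1}^{-1} \setminus F E_n^{-1}$ for $n \ge 1$. For $g \in F E_1^{-1}$ I use the crude bound $|\phi(\sigma_g(x)) - \phi(\sigma_g(y))| \le 2 ||\phi||_\infty$, and these contribute at most $|F E_1^{-1}| \cdot 2||\phi||_\infty \le 2 |F| |E_1| \cdot ||\phi||_\infty$. For $g \in F E_{n+1}^{-1} \setminus F E_n^{-1}$ we have $g \notin F E_n^{-1}$, so the pointwise estimate of the first paragraph gives $Var_{E_n}(\phi)$ per term; multiplying by the number of such $g$ and summing over $n$ should reproduce exactly the series in the definition of the norm, yielding the bound $|F| \cdot ||\phi||_{SV_{\{E_n\}}}$.

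The step that carries the whole argument is the counting of these shells, and in particular the containment $F E_{n+1}^{-1} \setminus F E_n^{-1} \subseteq F(E_{n+1}^{-1} \setminus E_n^{-1})$, which is what forces the factor $|F|$ and makes the increments $|E_{n+1}^{-1} \setminus E_n^{-1}|$ appear rather than $|E_{n+1}^{-1}|$. To see it, if $g = f a$ with $f \in F$ and $a \in E_{n+1}^{-1} \cap E_n^{-1}$, then $g \in F E_n^{-1}$; hence for $g$ outside $F E_n^{-1}$ every admissible $a$ must lie in $E_{n+1}^{-1} \setminus E_n^{-1}$, giving $|F E_{n+1}^{-1} \setminus F E_n^{-1}| \le |F| \cdot |E_{n+1}^{-1} \setminus E_n^{-1}|$. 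I expect this bookkeeping to be the only genuinely delicate point; the rest is the pointwise variation estimate together with a direct comparison to the terms of $||\phi||_{SV_{\{E_n\}}}$. It is worth noting that the inverses $E_n^{-1}$ in the definition of summable variation enter here precisely because agreement of $\sigma_g(x)$ and $\sigma_g(y)$ on $E_n$ is governed by membership of $g$ in $F E_n^{-1}$.
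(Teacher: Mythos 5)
Your proof is correct and is precisely the standard argument the paper alludes to when it omits the proof and points to Proposition 3.1 of \cite{barbieri-gibbsian}: the pointwise estimate via $g \notin F E_n^{-1}$, the shell decomposition of $G$ along $\{F E_n^{-1}\}$, and the containment $F E_{n+1}^{-1} \setminus F E_n^{-1} \subseteq F(E_{n+1}^{-1} \setminus E_n^{-1})$ together reproduce the two terms of $\|\phi\|_{SV_{\{E_n\}}}$ exactly, each multiplied by $|F|$. No gaps.
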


We note the proof of Observation \ref{infinite sum bound} is standard and the statement appears nearly identically as Proposition 3.1 of \cite{barbieri-gibbsian}. The final lemma in this section is a Stirling approximation that will be necessary to compute a lower bound on pressure in the proof Theorem \ref{sv theorem}. 

\begin{lemma}\label{stirling} For $b,  a \in \Q_+$, and any sequence $n_k \in \N$ such that for all $k$, $k!$ divides $n_k$. Let $D \in \R$ satisfying $\log \frac{a}{b} > D$. Then, for sufficiently small $c \in \Q_+$, we have
$$ \lim_{k \rightarrow \infty} n_k^{-1} \left( \log { an_k \choose c n_k} - \log {b n_k \choose c n_k} \right) > cD .$$
\end{lemma}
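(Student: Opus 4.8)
The plan is to apply Stirling's approximation to each factorial, observe that the extensive terms cancel so that each normalized log-binomial converges to a finite limit, and then finish with an elementary calculus argument at $c = 0$. Throughout I restrict to rational $c$ with $0 < c < \min(a,b)$.

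First I would dispose of integrality. Writing $a, b, c \in \Q_+$ over a common denominator $q$, for every $k \geq q$ we have $q \mid k! \mid n_k$, so $an_k$, $bn_k$, $cn_k$, $(a-c)n_k$, and $(b-c)n_k$ are all nonnegative integers for large $k$; this is exactly what the divisibility hypothesis on $n_k$ buys us, and it makes the binomial coefficients well defined along the tail of the sequence.

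Next I would run the Stirling computation. Using $\log m! = m \log m - m + O(\log m)$ on $\log \binom{an_k}{cn_k} = \log(an_k)! - \log(cn_k)! - \log((a-c)n_k)!$, the linear contributions $-an_k + cn_k + (a-c)n_k$ cancel, and after expanding $\log(an_k) = \log a + \log n_k$ (and likewise for the other two terms) the coefficient of $\log n_k$ is $a - c - (a-c) = 0$. Dividing by $n_k$ and letting $k \to \infty$, so that the $O(\log n_k)/n_k$ error vanishes, gives
$$\lim_{k \to \infty} n_k^{-1} \log \binom{an_k}{cn_k} = a \log a - c \log c - (a-c)\log(a-c) =: f(a,c),$$
and the identical statement with $b$ in place of $a$. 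Hence the limit in the statement exists and equals $g(c) := f(a,c) - f(b,c)$, where the two $c \log c$ terms cancel, leaving $g(c) = a\log a - (a-c)\log(a-c) - b\log b + (b-c)\log(b-c)$. Finally, $g$ is smooth on $[0, \min(a,b))$ with $g(0) = 0$, and direct differentiation gives $g'(c) = \log \frac{a-c}{b-c}$, so $g'(0) = \log \frac{a}{b} > D$. Therefore $\lim_{c \to 0^+} g(c)/c = g'(0) > D$, and there is a $\delta > 0$ with $g(c) > cD$ for all $c \in (0, \delta)$; choosing any rational $c$ in $(0,\delta) \cap (0, \min(a,b))$ finishes the argument.

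The only place that genuinely demands care is the Stirling bookkeeping: one must verify that both the $O(n_k)$ (linear) and the $O(n_k \log n_k)$ contributions cancel, which is what guarantees the normalized limit is finite rather than divergent. Once that cancellation is in hand, the derivative evaluation at $c=0$ is routine. The constraints on $c$—that it be rational, below $\min(a,b)$ for the range of the binomials, and below $\delta$ for the calculus bound—are mutually compatible on a nonempty interval of rationals near $0$, so no conflict arises.
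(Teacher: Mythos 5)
Your proposal is correct and follows essentially the same route as the paper: cancel the $\log(cn_k)!$ terms, apply Stirling to identify the limit as an explicit smooth function $g(c)$ with $g(0)=0$, and conclude from $g'(0)=\log\frac{a}{b}>D$ that $g(c)>cD$ for small rational $c$. Your computation $g'(c)=\log\frac{a-c}{b-c}$ is the simplified form of the derivative the paper writes out, and your explicit handling of the integrality of $cn_k$ via the $k!\mid n_k$ hypothesis is a welcome detail the paper leaves implicit.
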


\begin{proof} We define for all $c \in (0, \min \{ a, b\} ) \cap \Q$, 
$$f(c) = \lim_{k \rightarrow \infty} n_k^{-1} \left( \log { an_k \choose c n_k} - \log {b n_k \choose c n_k} \right) .$$

First note we have for all $k \in \N$, 
$$\left( \log { an_k \choose c n_k} - \log {b n_k \choose c n_k} \right) = \log (a n_k)! + \log ((b-c)n_k)! - \log (b n_k)! - \log ((a-c)n_k)! . $$
If we divide by $n_k$ and take the limit as $k$ goes to infinity, Stirling's approximation implies: 
$$f(c) = a \log \frac{a}{a-c}  + b \log \frac{b-c}{b} + c \log \frac{a-c}{b-c} .$$

We now examine 
$$f'(c) = \frac{a}{a-c} - \frac{b}{b-c} + \log \frac{a-c}{b-c} + c \frac{b-a}{(a-c)(c-b)} . $$
Note here $f'(0) = \log \frac{a}{b} > D$. Since $f(0) = 0$, it immediately follows that for sufficiently small $c \in \Q_+$, we know $f(c) > cD$. 
\end{proof}

\subsection{Proof of Theorem \ref{sv theorem}}

\setcounter{theorem}{0}
\begin{theorem} Let $G$ be a countable amenable group and $X$ be a $G$-subshift. Let $\phi \in SV(X)$, $\mu_\phi$ an equilibrium state for $\phi$, $F \Subset G$ and $v, w \in L_F(X)$. If $E_X(v) \subset E_X(w)$ then 
$$\mu_\phi([v]) \leq \mu_\phi([w]) \cdot  \sup_{x \in [v]} exp\left(  \sum_{g \in G} \phi(\sigma_g(x)) - \phi(\sigma_g(\xi_{v, w}(x)))  \right).$$
\end{theorem}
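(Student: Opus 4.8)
The plan is to adapt the sparse-replacement counting argument of Garcia-Ramos and Pavlov, replacing their word counts by the pressure-weighted sums $\sum \sup_{[\cdot]}\exp(\phi_{F_n})$ that are controlled by Lemma \ref{S_n pressure computation}. Write $C=\sup_{x\in[v]}\exp\left(\sum_{g\in G}\phi(\sigma_g(x))-\phi(\sigma_g(\xi_{v,w}(x)))\right)$ for the claimed multiplier, which is finite by Observation \ref{infinite sum bound}. Since $C$ does not depend on the measure, I would first reduce to ergodic $\mu_\phi$: decomposing an arbitrary equilibrium state into its ergodic components (each itself an equilibrium state) and integrating the inequality $\mu_\omega([v])\le C\,\mu_\omega([w])$ over the decomposition recovers the general statement. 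I may also assume $v\neq w$ and $\mu_\phi([v])>0$, as the claim is otherwise immediate.

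Fix $\epsilon>0$ and a tempered F\o lner sequence $\{F_n\}$ with $n!\mid|F_n|$. Lemma \ref{almost partition} supplies a finite family $\mathcal{P}=\{P_1,\dots,P_N\}$ of pairwise disjoint, left $F$-sparse sets and a subsequence (still denoted $\{F_n\}$) along which $\bigcup_i P_i$ covers at least a $(1-\epsilon)$ fraction of each $F_n$. Using the pointwise ergodic theorem and the Shannon--McMillan--Breiman theorem exactly as in the proof of Lemma \ref{S_n pressure computation}, I would isolate a set of \emph{good} words $S_n\subset L_{F_n}(X)$ with $\mu_\phi(S_n)\to 1$ on which the densities of $v$ and of $w$ inside each $P_i\cap F_n$ match $\mu_\phi([v])$ and $\mu_\phi([w])$ (times the density of $P_i$) up to $\epsilon$, and on which $\sup_{x\in[u]}\exp(\phi_{F_n}(x))$ is comparable to $\mu_\phi([u])\exp(|F_n|P_{top}(\phi))$.

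The core is a weighted double count carried out one part at a time. Fixing a part $P_i$ and a small rational $c_0$, set $m=c_0|F_n|$ and consider all pairs $(u,S)$ with $u\in S_n$ and $S\subseteq O_v(u)\cap P_i\cap F_n$ of size $m$ (automatically left $F$-sparse); each yields a legal word $u'=R_u^{v\to w}(S)\in L_{F_n}(X)$ since $E_X(v)\subset E_X(w)$. A good $u$ admits about ${a_i|F_n|\choose m}$ such choices, where $a_i|F_n|=|O_v(u)\cap P_i\cap F_n|$; Lemma \ref{lemma 4.3} bounds the number of pairs producing a fixed $u'$ by about ${b_i|F_n|\choose m}$, where $b_i|F_n|=|O_w(u')\cap P_i\cap F_n|$; and left $F$-sparseness together with summable variation yields the weight comparison $\sup_{[u']}\exp(\phi_{F_n})\ge C^{-m}e^{-o(|F_n|)}\sup_{[u]}\exp(\phi_{F_n})$, since replacing a single occurrence of $v$ by $w$ decreases $\phi_{F_n}$ by at most $\log C$ up to the summable tail. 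Summing the weighted contributions, bounding $\sum_{u'}\sup_{[u']}\exp(\phi_{F_n})$ by the full partition sum, and applying Lemma \ref{S_n pressure computation} on both sides cancels the two copies of $P_{top}(\phi)$ and leaves
$$\lim_{n\to\infty}\frac{1}{|F_n|}\left(\log{a_i|F_n|\choose m}-\log{b_i|F_n|\choose m}\right)\le c_0\log C.$$
Lemma \ref{stirling} with $D=\log C$ then forces $a_i\le C\,b_i$, for otherwise $\log(a_i/b_i)>\log C$ would make the left side exceed $c_0\log C$. Summing over the parts and letting $\epsilon,c_0\to 0$, so that $\sum_i a_i\to\mu_\phi([v])$ and $\sum_i b_i\to\mu_\phi([w])$, gives $\mu_\phi([v])\le C\,\mu_\phi([w])$.

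The step I expect to be most delicate is the weight comparison inside the double count: controlling the total error incurred by replacing the infinite cocycle $\sum_{g\in G}$ by the truncated Birkhoff sum $\sum_{g\in F_n}$ across all $m$ replacement sites simultaneously. Left $F$-sparseness ensures the replacements never overlap, and summable variation (through Observation \ref{infinite sum bound}) makes each individual truncation error summable; the remaining point is to confirm that these aggregate to only $o(|F_n|)$, which I would establish by separating replacement sites deep in the $F_n$-interior, where the tail of the variation series is negligible, from the few sites near the $F_n$-boundary, whose count is $o(|F_n|)$ by the F\o lner property. Tracking this error alongside the pigeonholing across the $N$ parts and the $\epsilon$-coverage loss is where the bookkeeping is heaviest.
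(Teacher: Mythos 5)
Your overall architecture --- the sparse partition from Lemma \ref{almost partition}, replacement along left $F$-sparse subsets of $O_v(u)$, the injectivity/multiplicity counts of Lemmas \ref{lemma 4.2} and \ref{lemma 4.3}, the Stirling comparison of Lemma \ref{stirling}, and the pressure computation of Lemma \ref{S_n pressure computation} --- matches the paper's, and your treatment of the cocycle truncation error (separating interior from boundary replacement sites) is if anything more careful than what the paper writes down. The genuine gap is in how you define the good set $S_n$: you assert that the pointwise ergodic theorem gives words $u$ in which the densities of occurrences of $v$ and of $w$ \emph{inside each part $P_i\cap F_n$} are close to $\mu_\phi([v])$ and $\mu_\phi([w])$ times the density of $P_i$. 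The ergodic theorem controls averages along the F\o lner sets $F_n$, not along the sparse subsets $P_i\cap F_n$, which do not form a F\o lner sequence; ergodic averages along them need not converge to the space average. A concrete failure: on the two-point orbit of $(01)^\infty$ in $\{0,1\}^{\Z}$ with its unique (ergodic) invariant measure, taking $v=0$, $F=\{0,1\}$ and $P_1=2\Z$, the density of occurrences of $0$ inside $P_1\cap[0,n)$ is, for one of the two points, equal to the full density of $P_1$ rather than $\mu([0])=1/2$ times it. So your $S_n$ can fail to have measure tending to $1$, and both the lower bound $a_i$ on $|O_v(u)\cap P_i|$ and the upper bound $b_i$ on $|O_w(u')\cap P_i|$ that drive your binomial ratio are unavailable for every $i$.

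This matters structurally, because your endgame is to prove $a_i\le C\,b_i$ for \emph{every} part and then sum over $i$. The paper avoids per-part density control entirely: it defines $S_n$ using only the total counts $|O_v(u)|$ and $|O_w(u)|$ over $F_n$ (which the ergodic theorem does control), and then pigeonholes --- first over parts to find, for each $u$, some $P_i$ carrying at least a $\tfrac{1}{N}\left(\mu_\phi([v])-5\delta/4\right)|F_n|$ share of the $v$-occurrences, and then over $n$ to fix a single index $i$ along a subsequence on which $S_n^i$ still realizes the pressure. The whole argument is then run by contradiction from the assumption $\mu_\phi([v])>e^{C}\mu_\phi([w])$, with the $1/N$ normalization appearing in both binomial coefficients so that it cancels in the ratio fed to Lemma \ref{stirling}. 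If you want to keep your direct (non-contradiction) formulation you would still need to replace the per-part density claim with such a pigeonhole, at which point you only learn about one part and can no longer conclude by summing $a_i\le C\,b_i$ over $i$; the contradiction structure is precisely what lets a single part suffice.
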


First we let $X$, $\phi$, $F$, $v$ and $w$ be in as the statement of the theorem. By ergodic decomposition it is sufficient to show the desired result for ergodic equilibrium states, so we let $\mu_\phi$ be an ergodic equilibrium state for $\phi$. Fix $C =  \sup_{x \in [v]}  \sum_{g \in G} \phi(\sigma_g(x)) - \phi(\sigma_g(\xi_{v, w}(x)))$ and note by Observation \ref{infinite sum bound} we know $-\infty < C < \infty$. We now suppose for a contradiction that $\mu_\phi([v]) > \mu_\phi([w]) \cdot  e^C$.

We now take $\delta \in (0, \frac{4 }{5} \mu_\phi([v]))  $ satisfying: 
$$e^{-C} \cdot  \frac{\mu_\phi([v]) - 5 \delta / 4}{\mu_\phi([w]) + 2 \delta} > 1 .$$
Note here this is attainable for all sufficiently small $\delta$ since we know 
$$f(\delta ) =  e^{-C} \cdot \frac{\mu_\phi([v]) - 5 \delta / 4}{\mu_\phi([w]) + 2 \delta} $$
is continuous and by assumption $f(0) > 1$.

Let $F_n$ be a F\o lner sequence such that for each $n$, $n!$ divides $|F_n|$ and $F_n^{-1} = F_n$. It is noted by Xu and Zheng in  \cite{xuzheng} that we can assume $F_n$ is tempered by passing to a subsequence. Re-index this sequence by $n$, and thus since $F_n$ is tempered, we know it satisfies the requirements for the Lemmas above.

We now define  $S_n \subset L_{F_n}(X)$ as follows: 
$$S_n := \{ u \in L_{F_n}(X) : |O_v(u)| \geq |F_n| (\mu_\phi([v]) - \delta) \text{ and }|O_w(u)| \leq |F_n| (\mu_\phi([w]) + \delta) \} . $$

We note here since $\mu_\phi$ is ergodic, $\mu_\phi ( S_n) \rightarrow 1$ and we can therefore apply Lemma \ref{S_n pressure computation} to see 
$$P_{top}(\phi) = \lim_{n \rightarrow \infty} |F_n|^{-1} \log \sum_{u \in S_n} \sup_{x \in [u]} exp( \phi_{F_n}(x) ) . $$

We now apply Lemma \ref{almost partition} with respect to $F$ and $\epsilon = \delta / 8$, re-index our F\o lner sequence according to the lemma, and let $\mathcal{P}  = \{ P_i : 1 \leq i \leq N \}$ be our collection of pairwise disjoint, left $F$-sparse sets. We also pass to the tail of the sequence of F\o lner sets ensuring that for each $n$ we have  
$$\frac{|\bigcup_{i \leq N} P_i \cap F_n|}{|F_n|} \geq 1 - \frac{\delta}{4} . $$

For each $1 \leq i \leq N$, we now define 
$$S_n^{i} = \{ u \in S_n : |O_v(u) \cap P_i| \geq \frac{\mu_\phi([v]) - 5 \delta / 4}{N} |F_n| \} . $$

\begin{lemma} For each $n \in \N$, $S_n = \bigcup_{1 \leq i \leq N} S_n^i$. 
\end{lemma}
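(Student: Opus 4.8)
The plan is to prove the two inclusions separately, with the nontrivial direction being a pigeonhole argument over the sets $P_i$. The inclusion $\bigcup_{i} S_n^i \subseteq S_n$ is immediate, since each $S_n^i$ is by definition a subset of $S_n$. So the content is in showing $S_n \subseteq \bigcup_i S_n^i$: given any $u \in S_n$, I must produce an index $i$ for which $|O_v(u) \cap P_i|$ meets the threshold $\tfrac{\mu_\phi([v]) - 5\delta/4}{N}|F_n|$.

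First I would fix $u \in S_n$ and observe that, because every $g \in O_v(u)$ satisfies $gF \subseteq F_n$ (and $e \in F$, which we may assume after translating $F$), all occurrences of $v$ in $u$ lie inside $F_n$, i.e. $O_v(u) \subseteq F_n$. Using that the $P_i$ are pairwise disjoint, I decompose the occurrence count as
$$|O_v(u)| = \sum_{i=1}^N |O_v(u) \cap P_i| + \Big| O_v(u) \setminus \bigcup_{i=1}^N P_i \Big|.$$
The leftover term is controlled by the covering deficiency of $\mathcal{P}$: since $O_v(u) \subseteq F_n$ and we have passed to the tail where $|\bigcup_{i} P_i \cap F_n| \geq (1 - \delta/4)|F_n|$, the occurrences missed by all the $P_i$ number at most $|F_n \setminus \bigcup_i P_i| \leq \tfrac{\delta}{4}|F_n|$.

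Next I combine this with the defining lower bound $|O_v(u)| \geq |F_n|(\mu_\phi([v]) - \delta)$ coming from $u \in S_n$. Rearranging the displayed decomposition gives
$$\sum_{i=1}^N |O_v(u) \cap P_i| \geq |F_n|(\mu_\phi([v]) - \delta) - \tfrac{\delta}{4}|F_n| = |F_n|\big(\mu_\phi([v]) - \tfrac{5\delta}{4}\big),$$
which is exactly where the constant $5\delta/4$ is manufactured (the slack $\delta$ from $S_n$ plus the covering slack $\delta/4$). A pigeonhole over the $N$ summands then forces at least one index $i$ with $|O_v(u) \cap P_i| \geq \tfrac{1}{N}|F_n|(\mu_\phi([v]) - \tfrac{5\delta}{4})$, and this is precisely the membership condition for $S_n^i$; note the right-hand side is positive because $\delta < \tfrac{4}{5}\mu_\phi([v])$. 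Hence $u \in \bigcup_i S_n^i$, completing the inclusion.

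The only genuinely delicate point I expect is the bookkeeping around boundary effects, namely justifying $O_v(u) \subseteq F_n$ so that the covering estimate for $F_n$ applies verbatim to the set of occurrences; this is handled cleanly by the $e \in F$ normalization. Everything else is arithmetic matching of constants, so I would keep that part terse.
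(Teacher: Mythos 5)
Your proof is correct and is essentially the paper's argument: the paper proves the nontrivial inclusion by contradiction, but it uses exactly the same decomposition of $|O_v(u)|$ over the $P_i$ and the leftover set $Q = F_n \setminus \bigcup_i P_i$, the same bound $|O_v(u) \cap Q| \leq \tfrac{\delta}{4}|F_n|$, and the same arithmetic producing the constant $5\delta/4$. Your direct pigeonhole phrasing, and your explicit remark that $O_v(u) \subseteq F_n$ (via $e \in F$), are only cosmetic differences from what the paper does implicitly.
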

 
\begin{proof} Suppose there exists some $u \in S_n \backslash \bigcup_{1 \leq i \leq N} S_n^{i}$. We therefore know for all $P \in \mathcal{P}$, $|O_v(u) \cap P| < \frac{\mu_\phi([v]) - 5 \delta / 4}{N} |F_n| $. Let $Q = F_n \backslash \bigcup_{i \leq N} P_i$ and note that 
$$|O_v(u)| = \sum_{i=1}^N | O_v(u) \cap P_i| + |O_v(u) \cap Q| < (\mu_\phi([v]) - 5 \delta / 4) |F_n| + |O_v(u) \cap Q| . $$
However, since $u \in S_n$ we know $(\mu_\phi([v]) - \delta) \cdot |F_n| < |O_v(u)|$. It follows that 
$$(\mu_\phi([v]) - \delta) \cdot |F_n| <  (\mu_\phi([v]) - 5 \delta / 4) |F_n| + |O_v(u) \cap Q| . $$
By our construction of $\mathcal{P}$ and since we took a sufficiently long tail for $F_n$, we know that $ |O_v(u) \cap Q| \leq |Q \cap F_n| \leq \frac{\delta}{4}|F_n|$. We therefore have $\mu_\phi([v]) - \delta < \mu_\phi([v]) -\delta$, arriving at a contradiction and we can conclude that $S_n = \bigcup_{1 \leq i \leq N} S_n^{i}$. 
\end{proof}

\begin{lemma}\label{S_i' pressure} There exists some fixed $1 \leq i \leq N$ and a subsequence $(n_k)$ for which we have: 
$$P_{top}(\phi) = \lim_{k \rightarrow \infty} |F_{n_k}|^{-1} \log \sum_{u \in S_{n_k}^i} \sup_{x \in [u]} exp( \phi_{F_{n_k}}(x) ) . $$
\end{lemma}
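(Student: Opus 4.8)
The plan is to exploit the finite union $S_n = \bigcup_{i=1}^N S_n^i$ established in the previous lemma together with the fact, from Lemma \ref{S_n pressure computation}, that the full partition sum over $S_n$ already computes $P_{top}(\phi)$. The key observation is that the logarithm is \emph{almost additive} over a finite union: since $S_n$ is covered by the $N$ sets $S_n^i$, the partition sum over $S_n$ is at most $N$ times the largest of the partition sums over the individual $S_n^i$, and this factor of $N$ becomes negligible after normalizing by $|F_n|^{-1}$.

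First I would record the two bounds that drive the argument. On the one hand, because $S_n \subseteq \bigcup_{i=1}^N S_n^i$, for each $n$ we have
$$\sum_{u \in S_n} \sup_{x \in [u]} exp(\phi_{F_n}(x)) \leq \sum_{i=1}^N \sum_{u \in S_n^i} \sup_{x \in [u]} exp(\phi_{F_n}(x)) \leq N \max_{1 \leq i \leq N} \sum_{u \in S_n^i} \sup_{x \in [u]} exp(\phi_{F_n}(x)).$$
Let $i(n)$ denote an index attaining this maximum. On the other hand, since each $S_n^i \subseteq L_{F_n}(X)$, the partition sum over $S_n^i$ is bounded above by the full partition sum $\sum_{u \in L_{F_n}(X)} \sup_{x \in [u]} exp(\phi_{F_n}(x))$, whose normalized logarithm converges to $P_{top}(\phi)$ by the expansive formula for topological pressure recorded in the preliminaries.

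Next I would apply the pigeonhole principle: since $i(n)$ takes values in the finite set $\{1, \dots, N\}$, some index $i$ occurs for infinitely many $n$, and I fix the corresponding subsequence $(n_k)$. Taking logarithms and dividing by $|F_{n_k}|$ in the first displayed inequality, the term $|F_{n_k}|^{-1} \log N$ vanishes in the limit because $|F_{n_k}| \to \infty$ (recall $n!$ divides $|F_n|$). Since the left-hand side converges to $P_{top}(\phi)$ by Lemma \ref{S_n pressure computation}, this yields $P_{top}(\phi) \leq \liminf_{k} |F_{n_k}|^{-1} \log \sum_{u \in S_{n_k}^i} \sup_{x \in [u]} exp(\phi_{F_{n_k}}(x))$. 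Combining this with the upper bound $\limsup_k |F_{n_k}|^{-1} \log \sum_{u \in S_{n_k}^i} \sup_{x \in [u]} exp(\phi_{F_{n_k}}(x)) \leq P_{top}(\phi)$ from the full-language comparison forces the limit to exist and equal $P_{top}(\phi)$.

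This argument is essentially routine once the two bounds are in place; there is no genuine obstacle, only the bookkeeping point that the maximizing index $i(n)$ may depend on $n$. The role of the pigeonhole step is precisely to extract a single fixed $i$ along a subsequence, which is why the conclusion is stated for a subsequence $(n_k)$ rather than for the full sequence. The only thing requiring care is that the multiplicative error $N$ from the union bound contributes a vanishing additive term $|F_{n_k}|^{-1}\log N$, which is guaranteed by $|F_n| \to \infty$.
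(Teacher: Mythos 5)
Your proposal is correct and follows essentially the same route as the paper: decompose the partition sum over $S_n = \bigcup_i S_n^i$, use pigeonhole to fix a single index $i$ along a subsequence, note that the multiplicative factor $N$ contributes only a vanishing $|F_{n_k}|^{-1}\log N$ term, and sandwich between the lower bound from Lemma \ref{S_n pressure computation} and the upper bound from the full partition sum. Your write-up is in fact slightly more careful than the paper's in phrasing the sandwich via $\liminf$ and $\limsup$.
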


\begin{proof}  First notice since $S_n = \bigcup_{i \leq N} S_n^i$, 
$$\sum_{ u \in S_n} \sup_{x \in [u]} exp ( \phi_{F_n}(x)) \leq \sum_{i=1}^N \sum_{ u \in S_n^{i}} \sup_{x \in [u]} exp ( \phi_{F_n}(x)),$$
and we therefore have for each $n$, some $1 \leq i_n \leq N$ such that 
$$\frac{1}{N} \sum_{u \in S_n} \sup_{x \in [u]} exp ( \phi_{F_n}(x)) \leq \sum_{ u \in S_n^{i_n}} \sup_{x \in [u]} exp ( \phi_{F_n}(x)) . $$
We now use pigeonhole principle to find a subsequence $n_k$ such that $i_{n_k} = i$ is constant.  Note by our construction we have 
$$\sum_{u \in S_{n_k}^i} \sup_{x \in [u]} exp(\phi_{F_n}(x)) \geq N^{-1} \sum_{u \in S_n}\sup_{x \in [u]} exp(\phi_{F_n}(x)) . $$

We now examine 
$$P_{top}(\phi) = \lim_{n \rightarrow \infty} |F_n|^{-1} \sum_{u \in S_n} \sup_{x \in [u]} exp(\phi_{F_n}(x))  $$
$$\geq \lim_{n \rightarrow \infty} |F_n|^{-1} \log \sum_{u \in S_{n_k}^i} \sup_{x \in [u]} exp(\phi_{F_n}(x)) \geq \lim_{n \rightarrow \infty} |F_n|^{-1} \log  N^{-1} \sum_{u \in S_n}\sup_{x \in [u]}  exp(\phi_{F_n}(x))$$
$$= \lim_{n \rightarrow \infty} |F_n|^{-1} \log N^{-1} + \lim_{n \rightarrow \infty} |F_n|^{-1} \sum_{u \in S_n} \sup_{x \in [u]} exp(\phi_{F_n}(x)) = P_{top}(\phi) .$$
\end{proof}

We now apply Lemma \ref{S_i' pressure} and fix the resulting $1 \leq i \leq N$ and reindex along the resulting subsequence $(n_k)$. Denote $S_k' = S_{n_k}^i$ and fix $P = P_i$. We can therefore compute topological pressure by restricting to the sequence of collections of finite configurations $S_n'$. We will now make replacements of $v$ with $w$ in every $u \in S_n'$ at locations in $P$ with a small frequency in order to increase the pressure of the dynamical system to arrive at our contradiction.

We let $a, b \in \Q_+$ such that $a \leq (\mu_\phi([v]) - 5\delta / 4) /N$, $b \geq  (\mu_\phi([w]) + 2\delta) / N $, and $\log \frac{a}{b} > C$. Note such $a, b \in \Q_+$ must exist since $\log \frac{ \mu_\phi([v]) - 5\delta / 4}{\mu_\phi([w]) + 2\delta} > C$ by assumption. We now let $\epsilon \in \Q_+$ be sufficiently small to satisfy Lemma \ref{stirling} with respect to $a, b$ and $D = C$.

Define for each $u \in S_n'$: 
$$A_u = \{ R_u^{v \rightarrow w}(S) : S \subset O_v(u) \cap P \text{ and } |S| =  \epsilon \cdot |F_n|   \} . $$
Since $\epsilon \in \Q$ and $n!$ divides $|F_n|$,  for sufficiently large $n$,  $ \epsilon \cdot |F_n|  \in \N$ and $A_u$ is well defined. We restrict our consideration for all $n$ large enough that this definition makes sense. Additionally, since $E_X(v) \subset E_X(w)$, we have $A_u \subset L(X)$.

We can now define for each $n \in \N$, $L_n = \bigcup_{u \in S_n'} A_u$.

\begin{lemma}\label{Ln lower bound lemma} For each $n \in \N$,  $\sum_{u \in L_n} \sup_{x \in [u]} exp \left( \phi_{F_n}(x) \right)$ is bounded below by 
\begin{equation}\label{sn' lower bound}
\frac{\left( \sum_{u \in S_n'} \sup_{x \in [u]} exp \left( \phi_{F_n}(x) - \frac{ \epsilon \cdot |F_n| }{ N}  C \right) \right)  { \lceil (\mu_\phi([v]) - 5\delta / 4) |F_n| /N \rceil \choose  \epsilon \cdot |F_n|  }}{ { \lfloor (\mu_\phi([w]) + 2\delta) |F_n|/ N \rfloor \choose \epsilon \cdot |F_n|  }} .
\end{equation}
\end{lemma}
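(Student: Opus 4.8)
The plan is to prove \eqref{sn' lower bound} by a weighted double count over the set of pairs
$$\mathcal{D}_n = \{ (u,S) : u \in S_n', \ S \subseteq O_v(u) \cap P, \ |S| = \epsilon |F_n| \},$$
assigning to each pair the weight $W(u') := \sup_{x \in [u']} exp(\phi_{F_n}(x))$ of its image $u' = R_u^{v \rightarrow w}(S) \in L_n$. Writing $m = \epsilon|F_n|$, I would estimate $\sum_{(u,S) \in \mathcal{D}_n} W(u')$ in two ways: grouping the pairs by their source $u \in S_n'$ gives a lower bound that produces the numerator of \eqref{sn' lower bound}, while grouping them by their image $u' \in L_n$ gives an upper bound in terms of $\sum_{u' \in L_n} W(u')$, the left-hand side of the lemma. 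Comparing the two and dividing by the resulting multiplicity factor yields the claim.

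For the source side, fix $u \in S_n'$. By the definition of $S_n' = S_n^i$ we have $|O_v(u) \cap P| \geq \lceil (\mu_\phi([v]) - 5\delta/4)|F_n|/N \rceil$, so $u$ occurs in at least $\binom{\lceil (\mu_\phi([v]) - 5\delta/4)|F_n|/N \rceil}{m}$ pairs. Let $\Xi_S : [u] \rightarrow [u']$ be the map that swaps $v$ for $w$ in every block $sF$, $s \in S$; this is well defined because $P$ is left $F$-sparse and the occurrences lie in $F_n$, so the blocks $\{sF\}_{s \in S}$ are disjoint and contained in $F_n$. Carrying out the $m$ swaps one at a time and applying the single-swap cocycle identity together with Observation \ref{infinite sum bound}, each swap lowers $\phi_{F_n}$ by at most $C$ up to a tail error, so that $\phi_{F_n}(\Xi_S(x)) \geq \phi_{F_n}(x) - mC - \rho_n$ for every $x \in [u]$, where $\rho_n = o(|F_n|)$. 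Taking the supremum gives $W(u') \geq e^{-\rho_n}\sup_{x \in [u]} exp(\phi_{F_n}(x) - mC)$, a bound depending only on $u$; summing it over the admissible $S$ reproduces, with subtracted constant $mC$, the numerator of \eqref{sn' lower bound}.

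For the image side, each $u' \in L_n$ is counted in the double sum with multiplicity equal to $|\{ (u,S) \in \mathcal{D}_n : R_u^{v \rightarrow w}(S) = u' \}|$. Fixing the left $F$-sparse set $T = P \cap F_n$, Lemma \ref{lemma 4.3} bounds this multiplicity by $\binom{|T \cap O_w(u')|}{m}$, and since $\binom{\cdot}{m}$ is monotone in its top argument it suffices to show $|P \cap O_w(u')| \leq \lfloor (\mu_\phi([w]) + 2\delta)|F_n|/N \rfloor$. Since every swap site lies in the sparse set $P$, the occurrences of $w$ inside $P$ in $u'$ are exactly the disjoint union of the $m$ newly created ones (namely $S$) with the surviving occurrences $O_w(u) \cap P$, so the matter reduces to controlling $|O_w(u) \cap P|$ by a $(1/N)$-fraction. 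Assembling the two sides gives
$$\binom{\lfloor (\mu_\phi([w]) + 2\delta)|F_n|/N \rfloor}{m} \sum_{u' \in L_n} W(u') \geq e^{-\rho_n}\binom{\lceil (\mu_\phi([v]) - 5\delta/4)|F_n|/N \rceil}{m} \sum_{u \in S_n'} \sup_{x \in [u]} exp(\phi_{F_n}(x) - mC),$$
which rearranges to \eqref{sn' lower bound}.

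The step I expect to be the main obstacle is precisely this per-part estimate $|P \cap O_w(u')| \leq \lfloor (\mu_\phi([w]) + 2\delta)|F_n|/N \rfloor$. Membership $u \in S_n$ only supplies the global bound $|O_w(u)| \leq (\mu_\phi([w]) + \delta)|F_n|$, which constrains the total number of $w$-occurrences but not their share inside the single part $P$; forcing that share down to a $(1/N)$-fraction requires exploiting the structure of the almost partition $\mathcal{P}$ from Lemma \ref{almost partition}, whose parts are translates of a fixed coordinate of a tiling shape, together with the ergodicity of $\mu_\phi$ to equidistribute $w$-occurrences across the $N$ parts. In practice this may mean strengthening the defining inequality of $S_n^i$ to also record $|O_w(u) \cap P_i| \leq (\mu_\phi([w]) + \delta)|F_n|/N$, and then re-checking by a Markov/pigeonhole argument over the $N$ parts that $S_n = \bigcup_i S_n^i$ still holds up to a set of arbitrarily small measure. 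Two secondary points remain: the boundary error $\rho_n = o(|F_n|)$ in the Birkhoff comparison, which is harmless once $|F_n|^{-1} \log(\cdot)$ is applied in the ensuing pressure computation; and matching the subtracted constant, since the cocycle estimate naturally yields $mC = \epsilon|F_n| C$ rather than the $\tfrac{\epsilon |F_n|}{N} C$ written in \eqref{sn' lower bound}.
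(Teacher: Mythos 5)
Your proposal reconstructs the paper's argument essentially step for step: the same weighted double count over pairs $(u,S)$, the same lower bound $|A_u|\geq\binom{\lceil(\mu_\phi([v])-5\delta/4)|F_n|/N\rceil}{\epsilon|F_n|}$ coming from the defining inequality of $S_n^i$, the same appeal to Lemma \ref{lemma 4.3} to bound the multiplicity of a given image $u'$, and the same telescoping of $R_u^{v\to w}(S)$ into single swaps each costing at most $C$. So there is no methodological divergence to report; what is worth recording is that the two points you flag as obstacles are both genuine, and the paper's own proof does not dispose of either. First, the per-part bound: the paper simply asserts $|O_w(u)\cap P|<(\mu_\phi([w])+\delta)|F_n|/N$, but membership in $S_n$ only yields the global bound $|O_w(u)|\leq(\mu_\phi([w])+\delta)|F_n|$, and nothing in the construction of $\mathcal{P}$ from Lemma \ref{almost partition} or in the definition of $S_n^i$ forces the $w$-occurrences to distribute evenly over the $N$ parts. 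The $1/N$ in the denominator is load-bearing: in the concluding pressure computation both $a$ and $b$ carry the factor $1/N$ precisely so that $\log(a/b)>C$ reduces to the contradiction hypothesis $\log\frac{\mu_\phi([v])-5\delta/4}{\mu_\phi([w])+2\delta}>C$, so one cannot simply replace the denominator by $\binom{\lfloor(\mu_\phi([w])+2\delta)|F_n|\rfloor}{\epsilon|F_n|}$. Your proposed repair (adding $|O_w(u)\cap P_i|\leq(\mu_\phi([w])+\delta)|F_n|/N$ to the definition of $S_n^i$) is the natural one, but be aware that the covering lemma $S_n=\bigcup_i S_n^i$ then needs to be re-proved and this is not automatic: the existing pigeonhole produces only \emph{one} index good for the $v$-count, while a Markov bound on the $w$-counts at threshold $(\mu_\phi([w])+\delta)|F_n|/N$ can exclude on the order of $N$ indices, so you would need to show that \emph{many} indices are good for $v$ (for instance by using that each left $F$-sparse part has density at most roughly $1/|F|$ in $F_n$, hence no single part can absorb most of $O_v(u)$) before intersecting the two conditions.

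Second, the constant in the exponent: since $A_u$ is defined with $|S|=\epsilon|F_n|$, the telescoping gives $\phi_{F_n}(\xi_{u,u'}(x))\geq\phi_{F_n}(x)-\epsilon|F_n|\,C$, not $-\frac{\epsilon|F_n|}{N}C$; the $N$ appearing in (\ref{sn' lower bound}) and in the phrase ``replacing $\epsilon|F_n|/N$ $v$'s'' is an internal inconsistency of the paper, and the subsequent Equation (\ref{bound1}), which extracts $-\epsilon C$ after dividing by $|F_n|$, together with the requirement $f(\epsilon)>\epsilon C$ from Lemma \ref{stirling}, confirms that $\epsilon|F_n|C$ is the intended quantity. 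You are right to insist on $mC=\epsilon|F_n|C$. Your tail term $\rho_n$ is also a fair caution: the single-swap inequality compares a partial sum over $s^{-1}F_n$ with the full cocycle sum over $G$ defining $C$, so a correction bounded via Observation \ref{infinite sum bound} is needed for swap sites near $\partial F_n$; making $\rho_n=o(|F_n|)$ requires splitting the swap sites into a F\o lner-small boundary portion and an interior portion with uniformly small tails, a step neither you nor the paper carries out, though as you note it is harmless once $|F_n|^{-1}\log(\cdot)$ is taken.
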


\begin{proof} Note here for each $u \in S_n'$: 
$$|A_u| = {|O_v(u) \cap P| \choose  \epsilon \cdot |F_n|   } \geq   { \lceil (\mu_\phi([v]) - 5\delta / 4) |F_n| /N \rceil \choose  \epsilon \cdot |F_n|   }  . $$

On the other hand, for every $u' \in \bigcup_{u \in S_n} A_u$, we know $u'$ came from some $u \in S_n$ by replacing $\epsilon \cdot |F_n| / N$ $v$'s with $w$. We can therefore get a bound on the following: 
$$|O_w(u') \cap P| \leq |O_w(u) \cap P| + \frac{\epsilon \cdot |F_n|}{N} < \frac{(\mu_\phi([w]) + \delta) |F_n| }{N} + \frac{\delta |F_n|}{|FF^{-1}| \cdot N} . $$
Since $|FF^{-1}| \geq 1$, we therefore have 
$$|O_w(u') \cap P| < \frac{(\mu_\phi([w]) + 2\delta) |F_n|}{N} . $$
We therefore know for any fixed $u' \in A_u$, there are at most 
$${ \lfloor (\mu_\phi([w]) + 2\delta) |F_n|  /N \rfloor \choose \epsilon \cdot |F_n| }$$
$u$s for which $u' \in A_u$.

It follows that $\sum_{u \in L_n} \sup_{x \in [u]} exp \left( \phi_{F_n}(x) \right)$ is bounded below by 
\begin{equation}\label{ln bound}
\frac{\left(  \sum_{u \in S_n'} \sum_{u' \in A_u} \sup_{x \in [u']} exp \left( \phi_{F_n}(x) \right) \right)  { \lceil (\mu_\phi([v]) - 5\delta / 4) |F_n| /N \rceil \choose  \epsilon \cdot |F_n|   }}{ { \lfloor (\mu_\phi([w]) + 2\delta) |F_n|/ N \rfloor \choose \epsilon \cdot |F_n|  }} . 
\end{equation}

$\;$

We now bound for any fixed $u \in S_n'$ and each $u' \in A_u$, $\sup_{x \in [u']} \phi_{F_n}(x)$. First note $u' = R_u^{v \rightarrow w}(S)$ for some $S \subset F_n$. Since $E_X(v) \subset E_X(w)$ and by our choice of $S$, we know $E_X(u) \subset E_X(u')$. We therefore know $\xi_{u, u'}$ (as defined above) is injective and non-identity on $[u]$. In particular, we are replacing our $v$'s located at $S$ with $w$'s just as we did in the construction of $u'$. Note here $\xi_{u, u'}([u]) \subset [u']$ and thus we have 
$$\sup_{x \in [u']} \phi_{F_n}(x) \geq \sup_{x \in \xi_{u, u'}([u])} \phi_{F_n}(x) =  \sup_{x \in [u]} \phi_{F_n}(\xi_{u, u'}(x)) . $$
We now fix any $x \in [u]$. Note that since $S$ is finite and left $F$-sparse, we may make the $v$ to $w$ replacements sequentially. Each of these replacements take the form $\sigma_{g^{-1}} ( \xi_{v, w}( \sigma_g (x)) $ for some $g \in G$. Note here for each replacement of this kind, by definition of $C$ we have 
$$\phi_{F_n}( \sigma_{g^{-1}} ( \xi_{v, w}( \sigma_g (x)) ) \geq \phi_{F_n}( x) - C . $$
Since we will make $|S|$ of these replacements, it follows that 
$$\phi_{F_n}( \xi_{u, u'}(x)) \geq \phi_{F_n}(x) - |S| C . $$
We therefore know that 
$$\sum_{u \in S_n'} \sum_{u' \in A_u} \sup_{x \in [u']} exp \left( \phi_{F_n}(x) \right) \geq \sum_{u \in S_n'} |A_u| \sup_{x \in [u]} exp \left( \phi_{F_n}(x)  - |S| C\right) . $$
Since $|A_u| \geq 1$, we therefore have 
\begin{equation}\label{sum bound}
\sum_{u \in S_n'} \sum_{u' \in A_u} \sup_{x \in [u']} exp \left( \phi_{F_n}(x) \right) \geq \sum_{u \in S_n'} \sup_{x \in [u]} exp \left( \phi_{F_n}(x)  - |S| C\right). 
\end{equation} 
Combining (\ref{ln bound}) and (\ref{sum bound}), we arrive at our desired result that  $\sum_{u \in L_n} \sup_{x \in [u]} exp \left( \phi_{F_n}(x) \right)$ is bounded below by (\ref{sn' lower bound}). 
\end{proof}

We will conclude the proof of Theorem \ref{sv theorem} by computing a lower bound on topological pressure and arrive at a contradiction. Since $L_n \subset L_{F_n}(X)$, it must be the case that 
$$P_{top}(\phi) \geq \limsup_{n \rightarrow \infty} |F_n|^{-1} \log \left( \sum_{u \in L_n} \sup_{x \in [u]} exp \left( \phi_{F_n}(x) \right) \right) . $$
By application of Lemma \ref{Ln lower bound lemma}, we can see that $P_{top}(\phi)$ is bounded below by:  
$$\liminf_{n \rightarrow \infty} |F_n|^{-1} \log \frac{\left( \sum_{u \in S_n'} \sup_{x \in [u]} exp \left( \phi_{F_n}(x) - \frac{ \epsilon \cdot |F_n| }{ N}  C \right) \right)  { \lceil (\mu_\phi([v]) - 5\delta / 4) |F_n| /N \rceil \choose  \epsilon \cdot |F_n|  }}{ { \lfloor (\mu_\phi([w]) + 2\delta) |F_n|/ N \rfloor \choose \epsilon \cdot |F_n|  }} . $$
Which is exactly 
\begin{equation}\label{bound1} = \liminf_{n \rightarrow \infty} |F_n|^{-1} \log \left( \sum_{u \in S_n'} \sup_{x \in [u]} exp \left( \phi_{F_n}(x)    \right) \right)   - \epsilon \cdot C\end{equation} 
$$ + |F_n|^{-1} \log  { \lceil (\mu_\phi([v]) - 5\delta / 4) |F_n| /N  \rceil \choose  \epsilon \cdot |F_n| }  - |F_n|^{-1} \log   {\lfloor(\mu_\phi([w]) + 2\delta) |F_n|/ N \rfloor \choose\epsilon \cdot |F_n| } . $$
We now note by our choice of $a$, $b$, and $\epsilon$, 
$$\lim_{n \rightarrow \infty} |F_n|^{-1}   \log  { \lceil (\mu_\phi([v]) - 5\delta / 4) |F_n| /N \rceil \choose  \epsilon \cdot |F_n|   }  -|F_n|^{-1}  \log  { \lfloor (\mu_\phi([w]) + 2\delta) |F_n|/ N \rfloor \choose \epsilon \cdot |F_n|  }$$
$$\geq  \lim_{n \rightarrow \infty} |F_n|^{-1}   \log  { a |F_n| \choose  \epsilon \cdot |F_n|   }  -|F_n|^{-1}  \log  { b |F_n| \choose \epsilon \cdot |F_n|  } > \epsilon C .$$
It therefore follows that Equation \ref{bound1} is strictly greater than 
$$\liminf_{n \rightarrow \infty} |F_n|^{-1} \log \left( \sum_{u \in S_n'} \sup_{x \in [u]} exp \left( \phi_{F_n}(x)    \right) \right) , $$ 
and since Equation \ref{bound1} is a lower bound for pressure, we therefore know 
$$P_{top}(\phi) > \liminf_{n \rightarrow \infty} |F_n|^{-1} \log \left( \sum_{u \in S_n'} \sup_{x \in [u]} exp \left( \phi_{F_n}(x)    \right) \right) . $$
This contradicts Lemma \ref{S_i' pressure}. In particular, this means our assumption that $\mu_\phi([v]) > \mu_\phi([w]) \cdot e^C$ was incorrect, arriving at our desired result that if $E_X(v) \subset E_X(w)$, then 
$$\mu_\phi([v]) \leq \mu_\phi([w]) \cdot  \sup_{x \in [v]} exp\left(  \sum_{g \in G} \phi(\sigma_g(x)) - \phi(\sigma_g(\xi_{v, w}(x)))  \right). $$

\subsection{Locally constant corollaries}

We will now explore the case where $\phi$ is locally constant. 

The following result shows that when the finite configurations $v$ and $w$ agree on a sufficiently chosen boundary, we may compute bounds for their relative measure using only information from the configurations themselves. Further, when they have equal extender sets, we have a closed form formula depending only on $\phi$, $v$, and $w$ to compute the ratio of their measures.

We will say that $\phi$ is an {\bf $H$-potential} for some $H \Subset G$ if for all $x, y \in X$ such that $x_H = y_H$, then $\phi(x) = \phi(y)$, i.e. $Var_H(\phi) = 0$. Additionally, for any $H \subset G$, we will denote $H^\pm = H \cup H^{-1}$.

\begin{corollary} Let $H, F \Subset G$, $v, w \in L_F(X)$, and $\phi$ be an $H$-potential. Suppose that $E_X(v) \subset E_X(w)$ and for all $g \in F^c H^\pm \cap F$, $v_g = w_g$ (i.e. $v$ and $w$ agree on their $H$-boundary). Then for any equilibrium state $\mu_\phi$ for $\phi$, 
$$\mu_\phi([v]) \leq \mu_\phi([w])  \cdot exp \left( \sum_{g \in F \backslash F^c H^{-1}} \phi ( \sigma_g (v)) - \phi ( \sigma_g(\xi_{v, w}(w))) \right).$$
\end{corollary}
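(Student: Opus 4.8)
The plan is to derive this as a direct specialization of Theorem \ref{sv theorem}: under the locally constant hypothesis the infinite cocycle $\sum_{g \in G} \phi(\sigma_g(x)) - \phi(\sigma_g(\xi_{v,w}(x)))$ should become independent of $x \in [v]$ and collapse to the finite sum over the right $H$-interior $Int_H(F) = F \backslash F^c H^{-1}$, so that the supremum appearing in Theorem \ref{sv theorem} can be computed in closed form.

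First I would invoke Theorem \ref{sv theorem}, whose hypotheses ($\phi \in SV(X)$ and $E_X(v) \subseteq E_X(w)$) are exactly those assumed here, to obtain $\mu_\phi([v]) \leq \mu_\phi([w]) \cdot \sup_{x \in [v]} \exp\left( \sum_{g \in G} \phi(\sigma_g(x)) - \phi(\sigma_g(\xi_{v,w}(x))) \right)$, the inner sum being absolutely summable by Observation \ref{infinite sum bound}. Since $E_X(v) \subseteq E_X(w)$, for every $x \in [v]$ the concatenation $w x_{F^c}$ is legal, so $\xi_{v,w}(x) = w x_{F^c}$; thus $x$ and $\xi_{v,w}(x)$ agree off $F$ and differ within $F$ precisely on the disagreement set $D = \{ f \in F : v_f \neq w_f \}$.

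Next I would localize each summand. Because $\phi$ is an $H$-potential we have $Var_H(\phi) = 0$, which translates to the statement that $\phi(\sigma_g(y))$ depends only on the restriction $y_{gH}$; hence the $g$-th summand vanishes whenever $x$ and $\xi_{v,w}(x)$ agree on $gH$, i.e. whenever $gH \cap D = \emptyset$. Only translates with $gH \cap D \neq \emptyset$, that is $g \in D H^{-1}$, can contribute. The crux is then to use the boundary agreement $v_g = w_g$ for all $g \in F^c H^\pm \cap F$ (equivalently $D \cap \partial_{H^\pm}(F) = \emptyset$) to show that every contributing translate satisfies $gH \subseteq F$, so that $g \in Int_H(F) = F \backslash F^c H^{-1}$ and the window $gH$ never reaches into $F^c$. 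Granting this containment $D H^{-1} \subseteq Int_H(F)$, for each such $g$ we have $(\sigma_g(x))_H = v_{gH}$ and $(\sigma_g(\xi_{v,w}(x)))_H = w_{gH}$ independently of the background $x_{F^c}$, so the summand is the constant $\phi(\sigma_g(v)) - \phi(\sigma_g(\xi_{v,w}(w)))$ (the latter being the statement's notation for $\phi$ evaluated at the $w$-filled configuration). The supremum over $x \in [v]$ is then attained identically and the exponent equals $\sum_{g \in F \backslash F^c H^{-1}} \phi(\sigma_g(v)) - \phi(\sigma_g(\xi_{v,w}(w)))$, which is the claimed bound.

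The main obstacle is precisely this geometric bookkeeping: verifying, from the agreement of $v$ and $w$ on the $H^\pm$-boundary of $F$, that no contributing translate $g \in D H^{-1}$ can push the $H$-window $gH$ outside $F$. This is where the symmetrization to $H^\pm$ and the noncommutativity of $G$ interact with the interior and boundary notions, and it must be handled carefully, in particular keeping the right interior $Int_H(F)$ distinct from the $H^\pm$-boundary and tracking on which side of $F$ the relevant indices $gh$ land. Once the containment $D H^{-1} \subseteq Int_H(F)$ is secured, the rest is a routine substitution into Theorem \ref{sv theorem}.
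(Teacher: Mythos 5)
Your strategy is the same as the paper's: apply Theorem \ref{sv theorem}, argue that all summands of the cocycle outside the right $H$-interior of $F$ vanish, and then use $gH \subseteq F$ on the interior to see that the surviving summands are constant on $[v]$ and computable from $v$ and $w$ alone. (The paper splits cases by the location of $g$ — the boundary $F \cap F^cH^{-1}$, the complement $F^c$, and the interior — rather than by first locating the disagreement set $D$, but the two decompositions are logically equivalent.) The steps you actually carry out are fine: $\xi_{v,w}(x) = w x_{F^c}$ on $[v]$ because $E_X(v) \subseteq E_X(w)$, the contributing translates are exactly those with $gH \cap D \neq \emptyset$, i.e.\ $g \in DH^{-1}$, and for $g$ with $gH \subseteq F$ the summand equals $\phi(\sigma_g(v)) - \phi(\sigma_g(\xi_{v,w}(w)))$ independently of $x$.

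The gap is that the step you yourself label as the crux is never proved, only ``granted,'' and as stated the containment $DH^{-1} \subseteq Int_H(F)$ does not follow from the hypotheses. For $g = dh^{-1}$ with $d \in D$, $h \in H$, you must show $gh' = d(h^{-1}h') \in F$ for every $h' \in H$. The hypothesis $D \cap F^cH^\pm = \emptyset$ gives exactly $dH^\pm \subseteq F$, i.e.\ control of $ds$ for $s \in H \cup H^{-1}$; but $h^{-1}h'$ ranges over $H^{-1}H$, which need not lie in $H^\pm$ — not even in $\Z^d$ (for $H = \{0,1\}^2 \subseteq \Z^2$, $(1,-1) \in H - H$ but $(1,-1) \notin H \cup (-H)$; for $H = \{0,1,5\} \subseteq \Z$, $4 = 5 - 1 \notin H^\pm$). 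Concretely, take $G = \Z$, $H = \{0,1,5\}$, $F = \{-5,-1,0,1,5\}$: one checks $F^cH^\pm \cap F = F \setminus \{0\}$, so $D = \{0\}$ is permitted by the agreement hypothesis; then $g = 0 - 5 = -5$ lies in $DH^{-1} \cap F$ but $g + 1 = -4 \notin F$, so $g \notin Int_H(F)$, the summand at $g$ sees the disagreement at $0$ and depends on the background coordinate $x_{-4}$, and yet $g$ is excluded from the sum over $F \setminus F^cH^{-1}$. So the reduction to the stated finite sum breaks. Closing this requires either an extra hypothesis such as $H^{-1}H \subseteq H^\pm$ (true, e.g., for intervals $\{0,\dots,k\} \subseteq \Z$) or thickening the agreement region to $F^c(H^{-1}H)^{-1} \cap F$. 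For what it is worth, the paper's own write-up makes the same leap (it asserts $x_{gH} = \xi_{v,w}(x)_{gH}$ for $g \in F \cap F^cH^{-1}$ ``by assumption,'' which again needs $gH \cap D = \emptyset$ and hence the same control of $H^{-1}H$), so your instinct about where the difficulty sits is exactly right — but the proposal does not resolve it.
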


\begin{proof} We first note, for all $x \in [v]$ and for any $g \in F \cap F^c H^{-1}$, by assumption we have $x_{gH} = \xi_{v, w}(x)_{gH}$. Since $\phi$ is an $H$-potential, it immediately follows that for all $g \in F \cap F^c H^{-1}$, 
$$ \phi ( \sigma_g (x)) - \phi ( \sigma_g(\xi_{v, w}(x))) = 0 . $$
We now consider $g \in F^c$. If $gH \cap F = \emptyset$, then $x_{gH} = \xi_{v, w}(x)_{gH}$ by definition of $\xi_{v, w}$. We now consider $gH \cap F \neq \emptyset$. However, by assumption since $g \in F^c$, we know for all $f \in gH \cap F$, $v_f = w_f$ and again we can conclude $x_{gH} = \xi_{v, w}(x)_{gH}$. Thus we know for all $g \in F^c$, 
$$ \phi \circ \sigma_g (x) - \phi \circ \sigma_g(\xi_{v, w}(x)) = 0 . $$
Since $\phi$ is locally constant, it has summable variation, and we can apply Theorem \ref{sv theorem} to see 
$$\mu_\phi([v]) \leq \mu_\phi([w]) \sup_{x \in [v]} exp \left( \sum_{g \in F \backslash F^c H^{-1}} \phi ( \sigma_g (x)) - \phi ( \sigma_g(\xi_{v, w}(x))) \right) . $$
We let $x, y \in [v]$, $g\in  F \backslash F^c H^\pm$, and $h \in H$. By construction we know $g \notin F^c H^{-1}$, and thus $gH \cap F^c = \emptyset$. Since $\phi$ is an $H$-potential, it follows that $\phi \circ \sigma_g (x) = \phi \circ \sigma_g (y)$, and similarly for $\xi_{v, w}(x)$ and $\xi_{v, w}(y)$. We can therefore conclude that $\sum_{g \in F \backslash F^c H^\pm} \phi ( \sigma_g (x)) - \phi ( \sigma_g(\xi_{v, w}(x)))$ does not depend on choice of $x \in [v]$ and can be computed by only looking at the cylinder set, and we have arrived at our desired result. 
\end{proof}

As an immediate corollary we have: 

\begin{cor} Let $H, F \Subset G$, $v, w \in L_F(X)$, and $\phi$ be an $H$ potential. Suppose that $E_X(v) = E_X(w)$ and for all $g \in F^c H^\pm \cap F$, $v_g = w_g$ (i.e. $v$ and $w$ agree on their $H$-boundary). Then for any equilibrium state $\mu_\phi$ for $\phi$, 
$$\frac{\mu_\phi([v])}{exp \left( \sum_{g \in F \backslash F^c H^{-1}} \phi \circ \sigma_g (v) \right)} = \frac{\mu_\phi([w])}{exp \left( \sum_{g \in F \backslash F^c H^{-1}} \phi \circ \sigma_g (w) \right)} . $$
\end{cor}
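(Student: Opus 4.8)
The plan is to derive this as an immediate consequence of Corollary \ref{locally constant cor} by exploiting the symmetry of the hypotheses under interchanging $v$ and $w$. The key observation is that the assumption $E_X(v) = E_X(w)$ is equivalent to the conjunction of $E_X(v) \subset E_X(w)$ and $E_X(w) \subset E_X(v)$, while the boundary-agreement condition ``$v_g = w_g$ for all $g \in F^c H^\pm \cap F$'' is manifestly symmetric in $v$ and $w$ (and the index set $F^c H^\pm \cap F$ does not depend on the configurations). Hence Corollary \ref{locally constant cor} applies both to the ordered pair $(v,w)$ and to the ordered pair $(w,v)$.

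First I would apply Corollary \ref{locally constant cor} to $(v,w)$. Since $\phi$ is an $H$-potential, for each $g \in F \backslash F^c H^{-1}$ the quantity $\phi(\sigma_g(\xi_{v,w}(x)))$ with $x \in [v]$ depends only on the $F$-coordinates of $\xi_{v,w}(x)$, which equal $w$, so it may be written $\phi \circ \sigma_g(w)$. The corollary then reads
$$\mu_\phi([v]) \leq \mu_\phi([w]) \cdot exp\left( \sum_{g \in F \backslash F^c H^{-1}} \phi \circ \sigma_g(v) - \phi \circ \sigma_g(w) \right),$$
which rearranges immediately to
$$\frac{\mu_\phi([v])}{exp\left( \sum_{g \in F \backslash F^c H^{-1}} \phi \circ \sigma_g(v) \right)} \leq \frac{\mu_\phi([w])}{exp\left( \sum_{g \in F \backslash F^c H^{-1}} \phi \circ \sigma_g(w) \right)}.$$

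Next I would apply Corollary \ref{locally constant cor} to the pair $(w,v)$, which is legitimate precisely because $E_X(w) \subset E_X(v)$ holds and the boundary condition is symmetric. This produces the analogous statement with the roles of $v$ and $w$ interchanged, i.e.\ exactly the reverse of the displayed inequality. Combining the two inequalities yields the claimed equality.

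The only mild point requiring care is bookkeeping: one must check that the exponential normalizing factors produced by the two applications are literally the same expressions, so that the second inequality is the exact reverse of the first. This follows from the local-constancy argument already established in the proof of Corollary \ref{locally constant cor}, namely that each summand $\phi \circ \sigma_g(\cdot)$ for $g \in F \backslash F^c H^{-1}$ depends only on the $F$-restriction of the relevant point and hence is unambiguously determined by $v$ (respectively $w$). Beyond verifying this symmetry there is no genuine obstacle, which is why the result is stated as an immediate corollary.
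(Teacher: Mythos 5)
Your proposal is correct and is exactly the intended argument: the paper states this result without proof as an ``immediate corollary'' of Corollary \ref{locally constant cor}, and the implicit reasoning is precisely your symmetric double application to $(v,w)$ and $(w,v)$ (valid since $E_X(v)=E_X(w)$ gives both inclusions and the boundary condition is symmetric), together with the observation that for $g \in F \backslash F^c H^{-1}$ one has $gH \subset F$, so each summand $\phi\circ\sigma_g$ is determined by the $F$-restriction and the two normalizing exponentials match. Nothing is missing.
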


We remind the reader that in the case of $\phi = 0$, equilibrium states correspond with measures of maximal entropy. In this case, as a corollary of Theorem \ref{sv theorem} we extend the results of Garcia-Ramos and Pavlov in \cite{GRP} to the countable amenable subshift setting. 

\begin{cor} Let $X$ be a $G$-subshift, $F \Subset G$, $v, w \in L_F(X)$, and $\mu$ a measure of maximal entropy. If $E_X(v) \subset E_X(w)$, then $\mu([v]) \leq \mu([w])$.  
\end{cor}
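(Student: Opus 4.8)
The plan is to recognize this statement as the specialization of Theorem \ref{sv theorem} to the zero potential, so essentially no new argument is needed. First I would observe that when $\phi \equiv 0$ the pressure $P_\phi(\mu) = h(\mu) + \int \phi \, d\mu$ reduces to $h(\mu)$; hence an equilibrium state for $\phi = 0$ is precisely a measure maximizing the Kolmogorov--Sinai entropy, that is, a measure of maximal entropy. Thus the hypothesis that $\mu$ is an MME is exactly the hypothesis that $\mu$ is an equilibrium state for $\phi = 0$, which is what Theorem \ref{sv theorem} requires.

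Second, I would verify the regularity assumption of Theorem \ref{sv theorem}, namely that $\phi \in SV(X)$. This is immediate: the constant function $\phi \equiv 0$ satisfies $Var_{E_n}(\phi) = 0$ for every $n$, so for any exhaustive sequence $\{ E_n \}$ the defining sum $\sum_{n} |E_{n+1}^{-1} \backslash E_n^{-1}| \cdot Var_{E_n}(\phi)$ is identically zero and in particular finite. Hence $0 \in SV(X)$ and the theorem is applicable.

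Finally, I would invoke Theorem \ref{sv theorem} directly. Under the hypothesis $E_X(v) \subset E_X(w)$ it gives
$$\mu([v]) \leq \mu([w]) \cdot \sup_{x \in [v]} exp\left( \sum_{g \in G} \phi(\sigma_g(x)) - \phi(\sigma_g(\xi_{v,w}(x))) \right).$$
Since $\phi \equiv 0$, every summand in the exponent vanishes, so the quantity inside the supremum is $exp(0) = 1$ for every $x$, and the supremum equals $1$. This yields $\mu([v]) \leq \mu([w])$, as desired. There is no genuine obstacle here; the only points demanding any attention are the two bookkeeping checks above --- that measures of maximal entropy coincide with equilibrium states for the zero potential, and that the zero potential lies in $SV(X)$ --- both of which are routine.
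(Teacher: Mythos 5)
Your proposal is correct and matches the paper's approach exactly: the paper states this as an immediate corollary of Theorem \ref{sv theorem} by setting $\phi = 0$, under which equilibrium states are precisely measures of maximal entropy and the supremum factor collapses to $1$. Your two bookkeeping checks (that $0 \in SV(X)$ and that MMEs are equilibrium states for the zero potential) are exactly the observations the paper leaves implicit.
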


%
%
%
%
%
%

%

\section{Conformal Gibbs Results}

\subsection{Homoclinic relation and conformal Gibbs} As previously mentioned, a measure $\mu$ is Gibbs for a potential $\phi$ (in the sense of the theorem of Dobru\v sin and that of Lanford-Ruelle) if and only if it is conformal Gibbs for $\phi$. This section will explore our conformal Gibbs-like result, for which we will follow the definition found in \cite{conformalisgibbs}. 

First we define the {\bf homoclinic relation}, also known as the {\bf Gibbs relation}, as $\mathfrak{T}_X \subset X \times X$ such that $(x, y) \in \mathfrak{T}_X$ if and only if $x_{F^c} = y_{F^c}$ for some $F \Subset G$. We now say a Borel isomorphism $f: X \rightarrow X$ is a {\bf holonomy of $\mathfrak{T}_X$} if for all $x \in X$, $(x, f(x)) \in \mathfrak{T}_X$. It is known (see \cite{conformalisgibbs}) that there exists a countable group $\Gamma$ of holonomies of $X$ such that $\mathfrak{T}_X = \{ (x, \gamma(x)) : x \in X, \gamma \in \Gamma \}$. In fact, by a trivial extension of Lemma 1 of \cite{conformalisgibbs} we can see immediately that $\Gamma = \braket{ \xi_{v, w} : v, w \in L_F(X), F \Subset G }$ is a countable group of holonomies such that $\mathfrak{T}_X = \{ (x, \gamma(x)) : x \in X, \gamma \in \Gamma \}$. 

For any measurable $A \subset X$, we define $\mathfrak{T}_X(A) = \bigcup_{x \in A} \{ y \in X : (x, y) \in \mathfrak{T}_X \}$. For a Borel measure $\mu$ on $X$, we say {\bf $\mu$ is $\mathfrak{T}_X$-nonsingular} if for all null sets $A \subset X$, $\mu(\mathfrak{T}_X(A)) = 0$. Note for any holonomy $f:X \rightarrow X$ if $A \subset X$ is a null set and $\mu$ is  $\mathfrak{T}_X$-nonsingular, then $\mu \circ f(A) \leq \mu(\mathfrak{T}_X(A)) = 0$ and therefore $\mu \circ f << \mu$. 

We define a {\bf cocycle on $\mathfrak{T}_X$} to be any measurable $\psi : \mathfrak{T}_X \rightarrow \R$ such that for all $x, y, z \in X$ with $(x,y), (y, z) \in \mathfrak{T}_X$, we have $\psi(x, y) + \psi(y, z) = \psi(x, z)$. We can now define conformality following Definition 1 of \cite{conformalisgibbs}: 
\begin{defn}  Let $\mu$ be a $\mathfrak{T}_X$-nonsingular Borel probability measure on $X$ and let $\psi : \mathfrak{T}_X \rightarrow \R$ be a cocycle. We say {\bf$\mu$ is $(\psi, \mathfrak{T}_X)$-conformal} if for any holonomy $f: X \rightarrow X$, we have
$$\frac{ d\mu \circ f}{d\mu}(x) = exp\left( \psi(x, f(x) \right) . $$
\end{defn}

Since we are concerned with Gibbs measures, in general for a potential $\phi \in SV(X)$ we define the following cocycle: 
$$\psi_\phi (x, y) = \sum_{g \in G} \phi \circ \sigma_g (y) - \phi \circ \sigma_g(x)$$
and we will say that a measure $\mu$ is {\bf conformal Gibbs} for $\phi$ if it is $(\psi_\phi, \mathfrak{T}_X)$-conformal. 

Connecting this concept further to the classical notion of Gibbs measures, it was shown in  \cite{conformalisgibbs} that a measure is conformal Gibbs for $\phi$ if and only if it is Gibbs for $\phi$ in the sense of Definition \ref{dlr gibbs}.

In addition to the homoclinic relation, in \cite{meyerovitch} Meyerovitch defined a subrelation as follows. First let 
$$\mathcal{F}(X)= \{ f \in Homeo(X) : \exists F \Subset G \; s.t. \; \forall x \in X,  \; f(x)_{F^c} = x_{F^c} \}.$$ 
In particular, $\mathcal{F}(X)$ is the group of all background preserving homeomorphisms. We then define $\mathfrak{T}_X^0 = \{ (x, f(x)) : x \in X, f \in \mathcal{F}(X) \}$. Clearly $\mathfrak{T}_X^0 \subset \mathfrak{T}_X$, however in general these are not necessarily equal. 

Notice that when $X$ is an SFT, then each $\xi_{ v, w}$ is in fact a homeomorphism and thus $\xi_{v, w} \in \mathcal{F}(X)$. In this case $\mathfrak{T}_X^0 = \mathfrak{T}_X$. Additionally, even if $X$ is not an SFT, if $E_X(v) = E_X(w)$, $\xi_{v, w}$ is again a homeomorphism. However, it need not be the case that $\xi_{ v, w}$ is continuous in general. 

\begin{ex}\label{sunnysideexample} Take $X \subset \{ 0, 1\}^\Z$ to the orbit closure of $x = 0^\infty 1 0^\infty$ (which is sometimes referred to as the sunny side up subshift). We define $\xi_{1, 0}$ to swap a $1$ and $0$ in the identity location and note here that $E_X(1) \subsetneq E_X(0)$. For each $n \geq 0$, define $x_n \in X$ such that $(x_n)_n = 1$. Then for all $n \geq 1$, $\xi_{1, 0}(x_n) = x_n$. However, we have $x_n \rightarrow 0^\infty$ and $\xi_{1, 0}(0^\infty) = x_0$. It immediately follows that 
$$\lim_{n \rightarrow \infty} \xi_{1, 0}(x_n) = 0^\infty \neq x_0 = \xi_{1, 0}( \lim_{n \rightarrow \infty} x_n) $$
and we can conclude $\xi_{1, 0}$ is not continuous. 
\end{ex}

In the case of $G = \Z^d$ and $\phi \in C(X)$ has $d$-summable variation, Meyerovitch proved the following result: 

\begin{fact}[Meyerovitch \cite{meyerovitch}] Let $X \subset \mathcal{A}^{\Z^d}$ be a subshift, $\phi \in C(X)$ a potential with $d$-summable variation, and $\mu_\phi$ be an equilibrium state for $\phi$ that  $\mathfrak{T}_X$-nonsingular. Then $\mu_\phi$ is $(\psi_\phi, \mathfrak{T}_X^0)$-conformal.
\end{fact}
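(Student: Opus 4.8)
The plan is to derive this from Theorem \ref{homoclinic theorem}, noting that the $\Z^d$ setting is a special case of a countable amenable group, so that theorem applies verbatim. By definition, $(\psi_\phi, \mathfrak{T}_X^0)$-conformality requires the identity $\frac{d(\mu_\phi \circ f)}{d\mu_\phi}(x) = exp(\psi_\phi(x, f(x)))$ for every holonomy $f$ of $\mathfrak{T}_X^0$, i.e. every background-preserving homeomorphism $f \in \mathcal{F}(X)$. The first reduction is to the generators: by the trivial extension of Lemma 1 of \cite{conformalisgibbs} already invoked in the excerpt, it suffices to verify the formula for maps $\xi_{v,w}$ with $E_X(v) = E_X(w)$, since exactly these $\xi_{v,w}$ are homeomorphisms and they generate $\mathfrak{T}_X^0$. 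The hypothesis that $\mu_\phi$ is $\mathfrak{T}_X$-nonsingular guarantees (as observed in Section 4.1) that $\mu_\phi \circ f \ll \mu_\phi$ for every holonomy $f$, so all the relevant Radon-Nikodym derivatives exist.

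Next I would treat a single generator $\xi = \xi_{v,w}$ with $E_X(v) = E_X(w)$. Such a $\xi$ is an involution that swaps $[v]$ and $[w]$ homeomorphically and fixes the complement of $[v] \cup [w]$. Write $D = \frac{d(\mu_\phi \circ \xi)}{d\mu_\phi}$. Since $E_X(v) \subset E_X(w)$, Theorem \ref{homoclinic theorem} gives $D(x) \leq exp(\psi_\phi(x, \xi(x)))$ for $\mu_\phi$-a.e. $x \in [w]$; applying the same theorem to the pair $(w,v)$, which is legitimate because $E_X(w) \subset E_X(v)$ and $\xi_{w,v} = \xi_{v,w} = \xi$, gives the matching one-sided bound $D(x) \leq exp(\psi_\phi(x, \xi(x)))$ for a.e. $x \in [v]$. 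To upgrade these to equalities I would exploit that $\xi$ is an involution: nonsingularity yields $\mu_\phi \circ \xi \sim \mu_\phi$ (mutual absolute continuity, using $\xi^{-1} = \xi$), and the Radon-Nikodym chain rule applied to $\xi \circ \xi = \mathrm{id}$ forces $D(x)\, D(\xi(x)) = 1$ a.e. Combining this with the cocycle antisymmetry $\psi_\phi(\xi(x), x) = -\psi_\phi(x, \xi(x))$ (immediate from $\psi_\phi(a,a)=0$ and additivity) converts the upper bound on $D(\xi(x))$ into a lower bound $D(x) \geq exp(\psi_\phi(x,\xi(x)))$ a.e. on $[w]$, giving equality there; symmetry gives it on $[v]$, and off $[v]\cup[w]$ the map $\xi$ is the identity so both sides equal $1$. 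This establishes conformality for each generator.

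Finally I would propagate the generator identity to arbitrary $f \in \mathcal{F}(X)$. Both $x \mapsto \log \frac{d(\mu_\phi \circ f)}{d\mu_\phi}(x)$ and $x \mapsto \psi_\phi(x, f(x))$ are additive cocycles over the groupoid of holonomies (the former by the Radon-Nikodym chain rule, the latter by the cocycle property of $\psi_\phi$), and they agree on the generating set $\{\xi_{v,w} : E_X(v) = E_X(w)\}$; hence they agree on the whole group these generate, which is $\mathfrak{T}_X^0$. It is worth remarking that this argument stops exactly at $\mathfrak{T}_X^0$ rather than $\mathfrak{T}_X$ precisely because only the homeomorphism generators (equal extender sets) admit the two-sided bound and the involution identity; for $E_X(v) \subsetneq E_X(w)$ one has only the single inequality of Theorem \ref{homoclinic theorem}. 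The main obstacle is the measure-theoretic bookkeeping in the second step: making the involution identity $D(x) D(\xi(x)) = 1$ and the ``a.e.'' quantifiers rigorous and consistent across the two cylinders $[v]$ and $[w]$ under composition, and confirming the analog of Lemma 1 of \cite{conformalisgibbs} that these $\xi_{v,w}$ generate $\mathfrak{T}_X^0$.
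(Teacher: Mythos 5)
Your proposal is correct and follows the same overall strategy as the paper's argument (which appears as the proof of Corollary \ref{conformal cor}, of which this Fact is the $\Z^d$ case): reduce to the generators $\xi_{v,w}$ with $E_X(v)=E_X(w)$, establish the exact Radon--Nikodym identity for each such generator, and propagate to all of $\mathcal{F}(X)$ by the cocycle property. The one step where you genuinely diverge is how the one-sided bound is upgraded to an equality on a generator. The paper does not invoke Theorem \ref{homoclinic theorem} twice; instead it returns to Theorem \ref{sv theorem} and, for shrinking cylinders $v_n = \xi_{v,w}(x)_{F_n}$ and $w_n = x_{F_n}$, applies the measure inequality in both directions to get the two-sided bound $\inf_{y\in[v_n]}\exp(-f(y)) \leq \mu_\phi([v_n])/\mu_\phi([w_n]) \leq \sup_{y\in[v_n]}\exp(-f(y))$, then takes limits using the continuity of $f$ on $[v]$ (Observation \ref{f cont}). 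Your route instead takes the two a.e.\ upper bounds on $[v]$ and $[w]$ from Theorem \ref{homoclinic theorem} and combines them via the involution identity $D(x)\,D(\xi(x))=1$ and the antisymmetry $\psi_\phi(\xi(x),x)=-\psi_\phi(x,\xi(x))$; this is valid (mutual absolute continuity follows from $\mathfrak{T}_X$-nonsingularity and $\xi^{-1}=\xi$, exactly as you say), and it buys you a purely measure-theoretic upgrade that never revisits the combinatorial Theorem \ref{sv theorem}, at the cost of the chain-rule and a.e.-transfer bookkeeping you flag at the end. The paper's version avoids that bookkeeping entirely because the two-sided pointwise bound on cylinder ratios converges to the equality directly. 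Both are sound; your closing remark about why the argument stops at $\mathfrak{T}_X^0$ rather than $\mathfrak{T}_X$ matches the paper's framing.
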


In particular, $\mu_\phi$ is conformal Gibbs with respect to the sub-relation $\mathfrak{T}_X^0$. What this means in the terminology of our paper is for $G = \Z^d$ and $\phi$ with $d$-summable variation, if $E_X(v) = E_X(w)$, then for $\mu_\phi$ almost every $x \in X$, $\frac{d (\mu \circ \xi_{v, w} ) }{d\mu}(x) = exp \left( \psi_\phi(x, \xi_{v, w}(x)) \right)$. We will use Theorem \ref{sv theorem} to extend this result both to the general amenable group setting as well as providing an inequality on the derivative when $E_X(v) \subset E_X(w)$.

\subsection{Proof of conformal Gibbs result}

In this section, we will prove the following theorem: 

\begin{theorem} Let $F \Subset G$, $v, w \in L_F(X)$, $\phi \in SV(X)$, and $\mu_\phi$ an equilibrium state for $\phi$. If $E_X(v) \subset E_X(w)$, then $\mu_\phi \circ \xi_{v, w}$ is absolutely continuous with respect to $\mu_\phi$ when restricted to $[w]$ and for $\mu_\phi$-almost every $x \in [w]$, 
$$\frac{ d (\mu \circ \xi_{v, w} ) }{d \mu}(x) \leq exp \left( \psi_\phi( x, \xi_{v, w}(x)) \right) . $$
\end{theorem}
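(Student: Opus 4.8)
The plan is to localize the global cylinder estimate of Theorem \ref{sv theorem} to a pointwise bound on the Radon--Nikodym derivative by a martingale (Lebesgue) differentiation argument along shrinking cylinders. Fix an increasing exhaustion $F \subset G_1 \subset G_2 \subset \cdots$ with $\bigcup_n G_n = G$, and set $\nu := (\mu_\phi \circ \xi_{v,w})|_{[w]}$; using that $\xi_{v,w}$ is an involution one has $\nu(A) = \mu_\phi(\xi_{v,w}(A \cap [w]))$. Since $E_X(v) \subset E_X(w)$, the map $\xi_{v,w}$ sends all of $[v]$ into $[w]$, and I would first record the decomposition $[w] = [w]_{\mathrm{good}} \sqcup [w]_{\mathrm{bad}}$, where $[w]_{\mathrm{good}} = \{x \in [w] : x_{F^c} \in E_X(v)\} = \xi_{v,w}^{-1}([v]) \cap [w]$ is the (Borel) set on which the swap to $v$ is legal. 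On $[w]_{\mathrm{bad}}$ the map $\xi_{v,w}$ is the identity, so $\nu$ and $\mu_\phi$ agree there and the derivative equals $1 = \exp(\psi_\phi(x,x))$; all the content therefore lives on $[w]_{\mathrm{good}}$.

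For $x \in [w]$ write $W_n = x_{G_n}$ and let $V_n$ be the configuration agreeing with $W_n$ off $F$ but carrying $v$ on $F$. The key structural point is that $E_X(v) \subset E_X(w)$ propagates to $E_X(V_n) \subset E_X(W_n)$ (read off the $F^c$-background of any $X$-point of shape $V_n$ and apply $E_X(v)\subset E_X(w)$), and that $\xi_{V_n,W_n} = \xi_{v,w}$ on $[V_n]$. Computing the pushforward cylinder mass gives $\nu([W_n]) = \mu_\phi([V_n]) + \mu_\phi([W_n] \cap [w]_{\mathrm{bad}})$, and applying Theorem \ref{sv theorem} to the pair $(V_n,W_n)$ yields
\[
\mu_\phi([V_n]) \le \mu_\phi([W_n]) \cdot \sup_{y \in [V_n]} \exp\Big(\sum_{g \in G} \phi(\sigma_g(y)) - \phi(\sigma_g(\xi_{v,w}(y)))\Big),
\]
the supremum being bounded uniformly by $\exp(|F|\cdot\lVert\phi\rVert_{SV_{\{E_n\}}})$ via Observation \ref{infinite sum bound}. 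Hence the ratios $\nu([W_n])/\mu_\phi([W_n])$ are uniformly bounded above; moreover Theorem \ref{sv theorem} also forces $\mu_\phi([V_n]) = 0$ whenever $\mu_\phi([W_n]) = 0$, so $\nu([W_n]) = 0$ on every $\mu_\phi$-null cylinder. These two facts feed the differentiation theorem: the singular part of $\nu$ is carried by the set where $\nu([W_n])/\mu_\phi([W_n]) \to \infty$, which the uniform bound renders $\nu$-null, giving the claimed absolute continuity $\nu \ll \mu_\phi$ together with the identification $\frac{d\nu}{d\mu_\phi}(x) = \lim_n \nu([W_n])/\mu_\phi([W_n])$ for $\mu_\phi$-a.e. $x$.

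It then remains to sharpen the uniform bound to the pointwise exponent. For $x \in [w]_{\mathrm{good}}$ set $\tilde x = \xi_{v,w}(x) \in [v]$, so that $\tilde x \in [V_n]$ for every $n$ and $\psi_\phi(x,\xi_{v,w}(x)) = \sum_g \phi(\sigma_g(\tilde x)) - \phi(\sigma_g(x)) = \Phi(\tilde x)$, where $\Phi(y) := \sum_g \phi(\sigma_g(y)) - \phi(\sigma_g(\xi_{v,w}(y)))$. I would control two contributions as $n \to \infty$: first, $\mu_\phi([W_n] \cap [w]_{\mathrm{bad}})/\mu_\phi([W_n]) \to 0$ for $\mu_\phi$-a.e. $x \in [w]_{\mathrm{good}}$, by martingale convergence of $\mathbf{1}_{[w]_{\mathrm{bad}}}$; second, $\sup_{y \in [V_n]} \Phi(y) \to \Phi(\tilde x)$. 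Combining these with the displayed inequality gives $\frac{d\nu}{d\mu_\phi}(x) \le \exp(\Phi(\tilde x)) = \exp(\psi_\phi(x,\xi_{v,w}(x)))$ a.e. on $[w]_{\mathrm{good}}$, which with the trivial bound on $[w]_{\mathrm{bad}}$ completes the proof.

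The main obstacle is the passage $\sup_{y \in [V_n]} \Phi(y) \to \Phi(\tilde x)$, precisely because $\xi_{v,w}$ is discontinuous on $X$ in general (Example \ref{sunnysideexample}), so one cannot invoke continuity of $\Phi$ on all of $X$. The resolution I would use is that the restriction $\xi_{v,w}|_{[v]}\colon x \mapsto w\,x_{F^c}$ \emph{is} continuous (the swap to $w$ is always legal on $[v]$, as $E_X(v) \subset E_X(w)$), and that the defining series for $\Phi$ converges uniformly on $[v]$ by summable variation (its tails are dominated by the $x$-independent estimate underlying Observation \ref{infinite sum bound}). Hence $\Phi$ is continuous on the compact set $[v] \cap X$; since $\tilde x \in [V_n]$ for all $n$ and the diameters of the sets $[V_n] \cap X$ tend to $0$ as $G_n \uparrow G$, uniform continuity yields $\sup_{y \in [V_n]} \Phi(y) \to \Phi(\tilde x)$. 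This is the one step where the hypothesis $\phi \in SV(X)$ is genuinely used beyond the input of Theorem \ref{sv theorem}.
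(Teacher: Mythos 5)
Your proposal is correct and follows essentially the same route as the paper: approximate by shrinking cylinders $W_n, V_n$, apply Theorem~\ref{sv theorem} to each pair, establish continuity of the cocycle on $[v]$ (you via uniform convergence of the tail of the series, the paper via locally constant approximations in the summable variation norm), and conclude with the martingale differentiation theorem. Your explicit decomposition of $[w]$ into the sets where the swap is and is not legal, with the resulting identity $\nu([W_n]) = \mu_\phi([V_n]) + \mu_\phi([W_n]\cap[w]_{\mathrm{bad}})$, is in fact slightly more careful than the paper's computation, which tacitly identifies $(\mu_\phi\circ\xi_{v,w})([w_n])$ with $\mu_\phi([v_n])$ and passes over the contribution of the set on which $\xi_{v,w}$ acts as the identity.
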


For the remainder of this section we fix some $F \Subset G$, $v, w \in L_F(X)$, $\phi \in SV(X)$, and $\mu_\phi$ an equilibrium state for $\phi$. 

\begin{obs} If $E_X(v) \subset E_X(w)$, then $\mu_\phi \circ \xi_{v, w} << \mu_\phi$ when restricted to $[w]$. 
\end{obs}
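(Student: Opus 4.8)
The plan is to establish the stronger quantitative bound $\mu_\phi \circ \xi_{v,w}(A) \leq (e^C + 1)\,\mu_\phi(A)$ for every Borel $A \subseteq [w]$, where $C = \sup_{x \in [v]} \sum_{g \in G}\big(\phi(\sigma_g x) - \phi(\sigma_g \xi_{v,w}(x))\big)$ is the finite constant furnished by Observation \ref{infinite sum bound}; absolute continuity on $[w]$ then follows at once. Since $\xi_{v,w}$ is an involution, I read $\mu_\phi \circ \xi_{v,w}(A)$ as $\mu_\phi(\xi_{v,w}^{-1}(A))$, and the first step is to decompose $\xi_{v,w}^{-1}(A)$ according to the three cases in the definition of $\xi_{v,w}$. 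The portion outside $[v] \cup [w]$ is empty (there $\xi_{v,w}$ is the identity while $A \subseteq [w]$), and the portion inside $[w]$ is contained in $A$ itself (assuming $v \neq w$, a point of $[w]$ can land in $A \subseteq [w]$ only if it is fixed), so it contributes at most $\mu_\phi(A)$. All remaining mass is carried by $B := \{x \in [v] : \xi_{v,w}(x) \in A\}$, and because $E_X(v) \subseteq E_X(w)$ forces $\xi_{v,w}(x) = w\,x_{F^c} \in [w]$ for every $x \in [v]$, the map $x \mapsto w\,x_{F^c}$ is a Borel bijection carrying $B$ into $A$.

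The heart of the argument is a refined, set-level version of Theorem \ref{sv theorem}. I would define the finite Borel measure $\lambda$ on $[w]$ by $\lambda(A) = \mu_\phi(B)$ and show $\lambda \leq e^C \mu_\phi$ on $[w]$. It suffices to verify this on cylinders $[u] \subseteq [w]$ with $u \in L_K(X)$, $K \supseteq F$ finite, $u_F = w$, since these generate the Borel subsets of $[w]$. Writing $\beta = u_{K \setminus F}$ and letting $u'$ be the configuration on $K$ with $u'_F = v$ and $u'_{K\setminus F} = \beta$, a direct check of the definition of $\xi_{v,w}$ gives $\{x \in [v] : \xi_{v,w}(x) \in [u]\} = [u']$, so that $\lambda([u]) = \mu_\phi([u'])$. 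I then apply Theorem \ref{sv theorem} to the pair $u', u$: one verifies $E_X(u') \subseteq E_X(u)$ directly from $E_X(v) \subseteq E_X(w)$ (and if $u'$ is illegal, $\mu_\phi([u']) = 0$ and the bound is trivial), and, crucially, that the relevant cocycle is unchanged — since $u'$ and $u$ differ only on $F$, the swap $\xi_{u',u}$ agrees with $\xi_{v,w}$ on $[u']$, so its cocycle sum is bounded by the same constant $C$ regardless of $\beta$. Hence $\lambda([u]) = \mu_\phi([u']) \leq e^C \mu_\phi([u])$.

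Finally I would upgrade the cylinder inequality to all Borel $A \subseteq [w]$ by a standard monotone-class argument: both $\lambda$ and $e^C \mu_\phi$ are finite measures satisfying $\lambda \leq e^C\mu_\phi$ on the generating algebra of finite disjoint unions of such cylinders, and $\{A : \lambda(A) \leq e^C \mu_\phi(A)\}$ is closed under monotone limits. Combining $\lambda \leq e^C \mu_\phi$ with the $\mu_\phi(A)$ contribution from the $[w]$-portion of the preimage yields $\mu_\phi \circ \xi_{v,w}(A) \leq (e^C + 1)\,\mu_\phi(A)$, which proves the observation.

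I expect the main obstacle to be conceptual rather than computational: recognizing that Theorem \ref{sv theorem}, which on its face only compares the total masses $\mu_\phi([v])$ and $\mu_\phi([w])$, can be bootstrapped into absolute continuity by applying it to the entire family of extensions $v\beta,\,w\beta$ and observing that the cocycle bound $C$ is \emph{uniform} in $\beta$ (the swap never disturbs coordinates outside $F$). Once this uniform cylinder estimate is secured, the passage to arbitrary Borel sets and the bookkeeping of the three cases of $\xi_{v,w}^{-1}(A)$ are routine.
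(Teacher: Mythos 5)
Your proposal is correct and follows essentially the same route as the paper: both arguments reduce to applying Theorem \ref{sv theorem} to the extended configurations $v\beta$ and $w\beta$ on cylinders inside $[w]$, using that the cocycle bound $e^C$ is uniform in the extension $\beta$, and then globalize by a routine measure-theoretic step (you via a monotone-class argument, the paper via outer regularity of a null set). The only difference is cosmetic: you record the explicit quantitative domination $\mu_\phi \circ \xi_{v,w}(A) \leq (e^C+1)\,\mu_\phi(A)$, which the paper's argument also yields implicitly.
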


\begin{proof} Let $A \subset [w]$ be a null set. Note here 
$$\xi_{v, w}(A) = \{ x \in A : \xi_{v, w}(x) = x\} \sqcup \{ \xi_{v, w}(x) : x \in A \text{ and }  \xi_{v, w}(x) \in [v]  \} . $$
Since $ \{ x \in A : \xi_{v, w}(x) = x\}  \subset A$, it must be a null set, and thus it is sufficient to assume without loss of generality that $\xi_{v, w}(A) \cap A = \emptyset$. 

Since we know $\mu_\phi$ is outer regular, we can let $U_n =\bigcup_{ i \leq N_n}  [w_{n, i}]$ be an outer approximation by cylinder sets such that $\mu_\phi(A) = \lim_{n \rightarrow \infty} \mu_\phi(U_n)$. By taking sufficiently large $n$, we can assume without loss of generality that $[w_{n, i}] \subset [w]$ for all $n, i$. Note for each $n, i$ we have $E_X(\xi_{v, w}(w_{n, i})) \subset E_X(w_{n, i})$. We now apply Theorem \ref{sv theorem} with respect to $\phi$, $\mu_\phi$, $v_{n, i}$ and $w_{n, i}$ to see:  
$$\mu_\phi(\xi_{v, w}(A)) = \lim_{n \rightarrow \infty} \sum_{i \leq N_n} \mu_\phi  \left( \xi_{v, w}([w_{n, i}]) \right) \leq \lim_{n \rightarrow \infty} \sum_{i \leq N_n} \mu_\phi ([w_{n, i}]) \sup_{x \in [v]} exp (f(x)) = 0 . $$
\end{proof}

In particular, this means we can discuss the LRN derivative when restricted to $[w]$. We now define for each $n \in \N$, $\phi_n : X \rightarrow \R$ to be an $E_n$ potential approximating $\phi$ from above. In particular, for all $u \in L_{E_n}(X)$, for any $x \in [u]$, let $\phi_n(x) = \sup_{y \in [u]} \phi(y)$.

\begin{obs}\label{summable var norm convergence} $\phi_n$ converges to $\phi$ with respect to the summable variation norm induced by $\{ E_n \}$. 
\end{obs}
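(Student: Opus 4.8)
The plan is to show directly that $|| \phi_n - \phi ||_{SV_{\{ E_n \}}} \to 0$. Write $\psi_n = \phi_n - \phi$. Since $x \in [x_{E_n}]$ for every $x \in X$, we have $\phi(x) \le \phi_n(x) = \sup_{y \in [x_{E_n}]} \phi(y)$, and the supremum runs over points agreeing with $x$ on $E_n$, so $0 \le \psi_n(x) \le Var_{E_n}(\phi)$ pointwise. This immediately gives $|| \psi_n ||_\infty \le Var_{E_n}(\phi)$, which controls the $2|E_1| \cdot || \psi_n ||_\infty$ term of the norm, and also yields the crude variation bound $Var_{E_m}(\psi_n) \le 2 || \psi_n ||_\infty \le 2 Var_{E_n}(\phi)$, valid for \emph{every} $m$. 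Because $Var_{E_m}(\phi)$ is nonincreasing in $m$ while $\sum_m |E_{m+1}^{-1} \setminus E_m^{-1}| \, Var_{E_m}(\phi) < \infty$ with infinitely many coefficients at least $1$, I also record that $Var_{E_n}(\phi) \to 0$.

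The heart of the argument is the claim that $Var_{E_m}(\phi_n) \le Var_{E_m}(\phi)$ for all $m$. For $m \ge n$ the function $\phi_n$ is constant on $E_m$-cylinders, so a direct computation gives $Var_{E_m}(\psi_n) = Var_{E_m}(\phi)$ exactly. For $m \le n$, fix $x, x' \in X$ with $x_{E_m} = x'_{E_m}$ and $\phi_n(x) \ge \phi_n(x')$, and choose $y \in [x_{E_n}]$ nearly realizing $\phi_n(x)$. The subtlety in an arbitrary subshift is that one cannot freely graft configurations together to witness a large value of $\phi_n(x')$; the key observation is that no grafting is needed, since $x'$ itself lies in $[x'_{E_n}]$ and satisfies $x'_{E_m} = x_{E_m} = y_{E_m}$. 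Hence $\phi_n(x') \ge \phi(x') \ge \phi(y) - Var_{E_m}(\phi)$, and letting $y$ approach the supremum gives $\phi_n(x) - \phi_n(x') \le Var_{E_m}(\phi)$. Combined with subadditivity of $Var_{E_m}$, this yields the second bound $Var_{E_m}(\psi_n) \le Var_{E_m}(\phi_n) + Var_{E_m}(\phi) \le 2 Var_{E_m}(\phi)$.

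It then remains to show $\sum_m |E_{m+1}^{-1} \setminus E_m^{-1}| \, Var_{E_m}(\psi_n) \to 0$, which I would do by splitting the sum at $m = n$. For $m \ge n$ the terms sum to the tail $\sum_{m \ge n} |E_{m+1}^{-1} \setminus E_m^{-1}| \, Var_{E_m}(\phi)$, which vanishes as $n \to \infty$ by summability. For $m < n$, fix $\epsilon > 0$ and choose $M$ so that $\sum_{m \ge M} |E_{m+1}^{-1} \setminus E_m^{-1}| \, Var_{E_m}(\phi) < \epsilon$; on the range $M \le m < n$ I apply $Var_{E_m}(\psi_n) \le 2 Var_{E_m}(\phi)$ to bound those terms by $2\epsilon$, while on the fixed finite range $m < M$ I apply instead $Var_{E_m}(\psi_n) \le 2 Var_{E_n}(\phi)$, bounding their contribution by $2 Var_{E_n}(\phi) \cdot |E_M| \to 0$ as $n \to \infty$. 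Since $\epsilon$ is arbitrary, the variation sum tends to $0$, and together with the $|| \psi_n ||_\infty$ estimate this gives norm convergence.

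The main obstacle is precisely the monotonicity claim $Var_{E_m}(\phi_n) \le Var_{E_m}(\phi)$: its naive proof wants to build a competitor for $\phi_n(x')$ by copying the good configuration $y$ on $E_n$ while retaining the $E_m$-data of $x'$, an operation that is illegal in a general subshift. Recognizing that $x'$ already serves as the required competitor is what makes the estimate go through without any mixing or finite-type hypothesis on $X$. Everything else is bookkeeping with the two variation bounds and the summability of the variation series.
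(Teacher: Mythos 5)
Your proof is correct and follows essentially the same route as the paper's: decompose $\|\phi_n-\phi\|_{SV_{\{E_n\}}}$ into the sup-norm term plus a head and a tail of the variation series, control the head via uniform convergence $\|\phi_n-\phi\|_\infty\to 0$ and the tail via the bound $Var_{E_m}(\phi_n-\phi)\le 2\,Var_{E_m}(\phi)$ together with summability. The only substantive difference is that you actually prove the key estimate $Var_{E_m}(\phi_n)\le Var_{E_m}(\phi)$ (observing that $x'$ itself is the needed competitor, so no illegal grafting occurs), whereas the paper asserts the resulting bound $Var_{E_k}(\phi_n-\phi)\le 2\,Var_{E_k}(\phi)$ without justification; your extra split at $m=n$ is harmless but not needed.
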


\begin{proof} First note, for any $k, n \in \N$, we have 
$$Var_{E_k}(\phi_n - \phi) \leq 2 || \phi||_\infty \text{ and } Var_{E_k}(\phi_n - \phi) \leq 2 Var_{E_k}(\phi) .$$

Let $\epsilon > 0$ and note since $\phi$ has summable variation according to $\{ E_n \}$,  by definition there exists some $M \in \N$ such that $\sum_{k=M+1}^\infty |E_{k+1} \backslash E_k| Var_{E_k}( \phi) < \epsilon$. We fix this $M$. We now note by continuity of $\phi$ and since $\{ E_n\}$ is an exhaustive sequence, we know there exists some $N \in \N$ such that for all $n \geq N$, 
$$ || \phi_N - \phi||_\infty < \epsilon \left( 2 |E_1| + \sum_{k=1}^M |E_{k+1} \backslash E_k| \right)^{-1}. $$
We now let $n \geq N$ and we examine: 
$$|| \phi_N - \phi||_{SV_{\{ E_n \}}} = 2 |E_1| \cdot || \phi_n - \phi||_\infty + \sum_{k=1}^M |E_{k+1} \backslash E_k| Var_{E_k}(\phi_n - \phi) + \sum_{k=M+1}^\infty |E_{k+1} \backslash E_k| Var_{E_k}(\phi_n - \phi)$$
$$\leq || \phi_n - \phi||_\infty \left( 2 |E_1| + \sum_{k=1}^M |E_{k+1} \backslash E_k| \right) + 2 \sum_{k=M+1}^\infty |E_{k+1} \backslash E_k| Var_{E_k}( \phi).$$
By our choice of $N$ and $M$, we know for all $n \geq N$, $||\phi_N - \phi||_{SV_{\{ E_n \}}} <  3 \epsilon$. Since $\epsilon$ was arbitrary, we can conclude the proof of the observation. 
\end{proof}

\begin{obs}\label{f cont} If $E_X(v) \subset E_X(w)$, then the function $f(x) = \psi_\phi(x, \xi_{v, w}(x))$ is continuous on $[v]$. 
\end{obs}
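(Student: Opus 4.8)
The plan is to exhibit $f$ as a uniform limit of continuous functions on $[v]$, using the locally constant approximants $\phi_n$ already introduced just before this statement.

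First I would record the crucial consequence of the hypothesis $E_X(v) \subset E_X(w)$: on the cylinder $[v]$ the swap map $\xi_{v, w}$ never reverts to the identity. Indeed, for $x \in [v]$ we have $x_{F^c} \in E_X(v) \subset E_X(w)$, so $w x_{F^c} \in X$, and the first case in the definition of $\xi_{v,w}$ applies, giving $\xi_{v, w}(x) = w x_{F^c}$. Thus on all of $[v]$ the map $\xi_{v,w}$ coincides with $x \mapsto w x_{F^c}$, whose value at each coordinate $h$ is $w_h$ for $h \in F$ and $x_h$ for $h \notin F$, each depending continuously on $x$; hence $\xi_{v,w}|_{[v]}$ is continuous. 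This is precisely where the hypothesis is used, and in its absence $\xi_{v,w}$ may fail to be continuous, as in Example \ref{sunnysideexample}.

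Next I would check that each $f_n(x) = \psi_{\phi_n}(x, \xi_{v,w}(x)) = \sum_{g \in G} \phi_n(\sigma_g(\xi_{v, w}(x))) - \phi_n(\sigma_g(x))$ is continuous on $[v]$. Since $\phi_n$ is an $E_n$-potential and $x$, $\xi_{v, w}(x)$ differ at most on $F$, the $g$-th summand vanishes whenever $x$ and $\xi_{v,w}(x)$ agree on $gE_n$, i.e. whenever $gE_n \cap F = \emptyset$; so only $g \in F E_n^{-1}$, a finite set, can contribute. Hence $f_n$ is a finite sum, each term being a composition of the continuous maps $\sigma_g$, $\xi_{v,w}|_{[v]}$, and $\phi_n$, and so $f_n$ is continuous on $[v]$. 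Finally I would control $f - f_n$ uniformly: because $\psi$ is linear in the potential, $f(x) - f_n(x) = \psi_{\phi - \phi_n}(x, \xi_{v,w}(x))$, and applying Observation \ref{infinite sum bound} to $\phi - \phi_n$ yields $\sup_{x \in [v]} |f(x) - f_n(x)| \leq |F| \cdot \|\phi - \phi_n\|_{SV_{\{E_n\}}}$. By Observation \ref{summable var norm convergence} the right-hand side tends to $0$, so $f_n \to f$ uniformly on $[v]$, and a uniform limit of continuous functions is continuous.

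The main obstacle is the infinite sum defining $\psi_\phi$: pointwise continuity of each summand is routine once $\xi_{v,w}$ is seen to be continuous on $[v]$, but promoting this to continuity of the whole series requires uniform control of the tail. That control is supplied precisely by the summable-variation bound of Observation \ref{infinite sum bound} together with the norm convergence $\phi_n \to \phi$, so the only genuinely new input is the observation that $E_X(v) \subset E_X(w)$ forces $\xi_{v,w}$ to act as the continuous map $x \mapsto w x_{F^c}$ throughout $[v]$.
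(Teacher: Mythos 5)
Your proposal is correct and follows essentially the same route as the paper: approximate $\phi$ by the locally constant $\phi_n$, note each $f_n$ is a finite sum and hence continuous, and pass to the uniform limit via Observation \ref{infinite sum bound} and Observation \ref{summable var norm convergence}. The one point you make explicit that the paper leaves implicit is that $E_X(v) \subset E_X(w)$ forces $\xi_{v,w}$ to coincide with the continuous map $x \mapsto w x_{F^c}$ on all of $[v]$, which is indeed the step where the hypothesis enters and is a worthwhile clarification.
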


\begin{proof} Using our locally constant approximations $\phi_n$, we define for each $n \in \N$:  
$$f_n(x) = \psi_{\phi_n}(x, \xi_{v, w}(x) ) = \sum_{g \in G} \phi_n(\sigma_g(\xi_{v, w}(x))) - \phi_n(\sigma_g(x)) . $$
Since $\phi_n$ is locally constant, we know the infinite sum is in fact a finite sum and so $f_n$ is continuous on $[v]$. We now claim that $f_n \rightarrow f$ uniformly on $[v]$. By Observation \ref{infinite sum bound}, we know for all $x \in [v]$, 
$$|f_n(x) - f(x)| \leq |F| \cdot ||\phi_n - \phi||_{SV_{\{ E_n \}}} . $$
Since $\phi_n \rightarrow \phi$ with respect to the summable variation norm by Observation \ref{summable var norm convergence}, we can conclude that $f_n \rightarrow f$ uniformly on $[v]$. It immediately follows that $f$ is continuous on $[v]$.  
\end{proof}

We can combine the above observations with Theorem \ref{sv theorem} to prove the desired result.  
\setcounter{theorem}{2}
\begin{theorem} Let $F \Subset G$, $v, w \in L_F(X)$, $\phi \in SV(X)$, and $\mu_\phi$ an equilibrium state for $\phi$. If $E_X(v) \subset E_X(w)$, then for $\mu_\phi$-almost every $x \in [w]$, 
$$\frac{ d\mu \circ \xi_{v, w}}{d \mu}(x) \leq exp \left( \psi_\phi( x, \xi_{v, w}(x)) \right) . $$
\end{theorem}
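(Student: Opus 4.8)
The plan is to reduce the statement about the Lebesgue–Radon–Nikodym derivative to the measure inequality of Theorem \ref{sv theorem} by testing against cylinder sets that refine the configuration $w$, and then use the continuity of the density bound $f(x) = \psi_\phi(x,\xi_{v,w}(x))$ established in Observation \ref{f cont} to pass from an integrated inequality to a pointwise one. The absolute continuity $\mu_\phi \circ \xi_{v,w} \ll \mu_\phi$ on $[w]$ is already given by the preceding observation, so the Radon–Nikodym derivative exists; the task is to bound it.

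First I would fix a Borel set $A \subset [w]$ and aim to show $\mu_\phi(\xi_{v,w}(A)) \leq \int_A \exp(f(x))\, d\mu_\phi$, which by definition of the Radon–Nikodym derivative and arbitrariness of $A$ yields the pointwise bound $\mu_\phi$-a.e. To do this, I would approximate $A$ from outside by finite unions of cylinder sets $U_n = \bigcup_{i \le N_n} [w_{n,i}]$ with $[w_{n,i}] \subset [w]$ and $\mu_\phi(U_n) \to \mu_\phi(A)$, exactly as in the proof of the preceding absolute-continuity observation. For each refining cylinder $[w_{n,i}]$, the configuration $v_{n,i} := \xi_{v,w}(w_{n,i})$ obtained by swapping $w$ back to $v$ on $F$ satisfies $E_X(v_{n,i}) \subset E_X(w_{n,i})$, so Theorem \ref{sv theorem} applies and gives $\mu_\phi([v_{n,i}]) \le \mu_\phi([w_{n,i}]) \cdot \sup_{x \in [v_{n,i}]} \exp(f(x))$. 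Summing over $i$ and taking $n \to \infty$ bounds $\mu_\phi(\xi_{v,w}(A))$ by a sum of local suprema of $\exp(f)$ weighted by $\mu_\phi([w_{n,i}])$.

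The crucial point, and where I expect the main obstacle to lie, is converting these \emph{supremum} bounds on each cylinder into the \emph{integral} $\int_A \exp(f)\,d\mu_\phi$. This is precisely where continuity of $f$ on $[v]$ from Observation \ref{f cont} is needed: as the cylinders $[w_{n,i}]$ shrink (refining along the exhaustive sequence $\{E_n\}$), the local supremum $\sup_{x \in [v_{n,i}]}\exp(f(x))$ converges, by uniform continuity of $\exp \circ f$ on the relevant cylinder, to the value $\exp(f(x))$ at points of $A$ pulled back through $\xi_{v,w}$. I would therefore refine the cylinder approximation so that each $[w_{n,i}]$ has shape containing $E_n$, control the oscillation of $f$ on each piece by a modulus of continuity tending to zero, and thereby replace $\sum_i \mu_\phi([w_{n,i}]) \sup \exp(f)$ by $\int_{U_n} \exp(f \circ \xi_{v,w})\, d\mu_\phi$ up to an error vanishing as $n \to \infty$. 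Care must be taken that $f$ is being evaluated along the correct copy (on $[v]$ versus $[w]$) and that the change of variables under $\xi_{v,w}$ matches the cocycle identity $\psi_\phi(x,\xi_{v,w}(x))$; the cocycle property of $\psi_\phi$ ensures the sign conventions line up so that the bound at $x \in [w]$ reads $\exp(\psi_\phi(x,\xi_{v,w}(x)))$ as stated.

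Finally, having established $\mu_\phi(\xi_{v,w}(A)) \le \int_A \exp(\psi_\phi(x,\xi_{v,w}(x)))\, d\mu_\phi$ for all Borel $A \subset [w]$, I would identify the left-hand side as $\int_A \frac{d(\mu_\phi \circ \xi_{v,w})}{d\mu_\phi}\, d\mu_\phi$ and invoke the standard fact that if $\int_A g\, d\mu_\phi \le \int_A h\, d\mu_\phi$ for every measurable $A$ then $g \le h$ almost everywhere, giving the claimed pointwise inequality for $\mu_\phi$-almost every $x \in [w]$.
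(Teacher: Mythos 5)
Your proposal is correct in substance and uses the same two engines as the paper --- Theorem \ref{sv theorem} applied to cylinders refining along a F\o lner sequence, plus the continuity of $f(x)=\psi_\phi(x,\xi_{v,w}(x))$ on $[v]$ from Observation \ref{f cont} --- but it organizes the limit differently. You characterize the Radon--Nikodym derivative by testing against arbitrary Borel sets $A\subset[w]$, outer-approximating by cylinder covers, and converting the per-cylinder supremum bounds into an integral via a modulus of continuity. The paper instead works pointwise: for a fixed $x\in[w]$ it sets $w_n=x_{F_n}$, $v_n=\xi_{v,w}(x)_{F_n}$, applies Theorem \ref{sv theorem} once per scale to get $\mu_\phi([v_n])\le\mu_\phi([w_n])\sup_{y\in[v_n]}\exp(-f(y))$, and then invokes the martingale/differentiation theorem to identify $\lim_n \mu_\phi([v_n])/\mu_\phi([w_n])$ with the derivative at $x$, with continuity of $f$ on $[v]$ giving $\sup_{y\in[v_n]}\exp(-f(y))\to\exp(-f(\xi_{v,w}(x)))=\exp(f(x))$. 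The pointwise route buys you freedom from the disjointification and oscillation bookkeeping in your third paragraph; your route makes the ``for a.e.\ $x$'' conclusion more transparent. Two places in your sketch need the care you only gesture at: first, the sign --- Theorem \ref{sv theorem} produces $\sup_{y\in[v_{n,i}]}\exp(-f(y))$, which you must convert to $\exp(f(x))$ for $x\in[w_{n,i}]$ via the antisymmetry $f(\xi_{v,w}(x))=-f(x)$; second, each cylinder $[w_{n,i}]$ generally contains both points where the $w\to v$ swap is legal and points where $\xi_{v,w}$ acts as the identity, so passing from $\sum_i\mu_\phi([w_{n,i}])\sup_{[v_{n,i}]}\exp(-f)$ to $\int_{U_n}\exp(f)\,d\mu_\phi$ requires splitting off the fixed-point set (where $f=0$ and the bound is trivial) and using that the measure of the portion of $U_n$ outside $A$ intersected with the moving set tends to zero. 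Both issues are resolvable, so I see no gap, only details to fill in.
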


\begin{proof} First we fix $F \Subset G$, $v, w \in L_F(X)$, $\phi \in C(X)$, and $\mu_\phi$ as in the theorem. We again define for all $x \in X$, 
$$f(x) = \psi_\phi(x, \xi_{v, w}(x)) = \sum_{g \in G} \phi(\sigma_g(\xi_{v,w}(x))) - \phi(\sigma_g(x))$$
which we know to be continuous on $[v]$ by Observation \ref{f cont}. 

We fix any F\o lner sequence $\{ F_n \}$ and assume without loss of generality that $F \subset F_n$ for all $n$. Fix any $x \in [w]$. We define $w_n = x_{F_n}$ and $v_n = \xi_{v, w}(x)_{F_n}$. Note here $\bigcap_{n\in \N} [w_n] = x$, $\bigcap_{n \in \N} [v_n] = \xi_{v, w}(x)$. Since $F \subset F_n$, we know, $E_X(v_n) \subset E_X(w_n)$. We can directly apply Theorem \ref{sv theorem} to see for each $n \in \N$, 
$$\mu_\phi(v_n) \leq \mu_\phi(w_n) \sup_{y \in [v_n]} exp \left( - f(y) \right) . $$

Since by Observation \ref{f cont} we know $f$ is continuous on $[v]$, and thus there exists some $y_n \in [v_n]$ such that $\sup_{y \in [v_n]} exp \left( - f(y) \right) = exp \left( - f(y_n) \right)$. We therefore have 
$$\frac{\mu_\phi(v_n)}{\mu_\phi(w_n)}  \leq exp \left(  - f(y_n) \right) . $$

Since $y_n \in [v_n]$, and $\bigcap_{n \in \N} [v_n] = \{ \xi_{v, w}(x) \}$, we know $y_n \rightarrow \xi_{v, w}(x)$. We can now take limits to see
$$\frac{d \mu_\phi \circ \xi_{v, w}}{d \mu_\phi} (x) = \lim_{n \rightarrow \infty} \frac{\mu_\phi(v_n)}{\mu_\phi(w_n)} \leq \lim_{n \rightarrow \infty} exp \left( - f(y_n) \right) = exp( f(x)) . $$
\end{proof}

Finally, we note that whenever $\mu_\phi$ satisfies the equations in Theorem \ref{homoclinic theorem}, it is trivial to show $\mu_\phi$ must also satisfy the equations in Theorem \ref{sv theorem}. 

\begin{obs}\label{thm3 obs}
Let $X$ be a subshift and $u, v \in L_F(X)$ such that $E_X(v) \subset E_X(w)$. Let $\phi \in C(X)$ and $\mu_\phi$ satisfy for $\mu_\phi$-almost every $x \in [w]$, 
$$\frac{ d (\mu \circ \xi_{v, w} ) }{d \mu}(x) \leq exp \left( \psi_\phi( x, \xi_{v, w}(x)) \right). $$
Then, 
$$\mu_\phi([v]) \leq \mu_\phi([w]) \cdot \sup_{x \in [v]} exp \left( \sum_{g \in G} \phi ( \sigma_g(x)) - \phi(\sigma_g(\xi_{v, w}(x))) \right). $$ 
\end{obs}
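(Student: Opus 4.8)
The plan is to transport the hypothesised derivative bound, which lives on $[w]$, back to the cylinder $[v]$ by exploiting that $\xi_{v,w}$ is a measure-theoretic involution sending $[v]$ into $[w]$. First I would record the structural consequences of $E_X(v)\subset E_X(w)$: for every $x\in[v]$ we have $x_{F^c}\in E_X(v)\subset E_X(w)$, so $w x_{F^c}\in X$ and hence $\xi_{v,w}(x)=w x_{F^c}\in[w]$; thus $\xi_{v,w}$ carries all of $[v]$ into $[w]$. Since $\xi_{v,w}$ is its own inverse (applying it twice restores $x$), setting $B:=\xi_{v,w}([v])\subseteq[w]$ gives $\xi_{v,w}(B)=[v]$. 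Using the convention $(\mu_\phi\circ\xi_{v,w})(A)=\mu_\phi(\xi_{v,w}(A))$ from the preceding observation, this yields $\mu_\phi([v])=\mu_\phi(\xi_{v,w}(B))=(\mu_\phi\circ\xi_{v,w})(B)$.

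Next I would use the absolute continuity of $\mu_\phi\circ\xi_{v,w}$ with respect to $\mu_\phi$ on $[w]$ (established just above) to write $(\mu_\phi\circ\xi_{v,w})(B)=\int_B \frac{d(\mu_\phi\circ\xi_{v,w})}{d\mu_\phi}\,d\mu_\phi$, and then invoke the hypothesis to bound the integrand, obtaining $\mu_\phi([v])\le\int_B \exp\!\big(\psi_\phi(x,\xi_{v,w}(x))\big)\,d\mu_\phi(x)$. The key step is to show that on $B$ this integrand never exceeds $C:=\sup_{z\in[v]}\exp\!\big(\sum_{g\in G}\phi(\sigma_g(z))-\phi(\sigma_g(\xi_{v,w}(z)))\big)$. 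For $x\in B$, put $y:=\xi_{v,w}(x)\in[v]$, so that $\xi_{v,w}(x)=y$ and, by the involution property, $x=\xi_{v,w}(y)$. Substituting these into the cocycle gives $\psi_\phi(x,\xi_{v,w}(x))=\psi_\phi(\xi_{v,w}(y),y)=\sum_{g\in G}\phi(\sigma_g(y))-\phi(\sigma_g(\xi_{v,w}(y)))$, which is precisely the exponent defining $C$ evaluated at $z=y\in[v]$. Hence the integrand is at most $C$ for every $x\in B$.

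Finally, bounding $\int_B C\,d\mu_\phi=C\,\mu_\phi(B)\le C\,\mu_\phi([w])$ (using $B\subseteq[w]$) gives $\mu_\phi([v])\le\mu_\phi([w])\cdot C$, which is exactly the claimed inequality. I do not anticipate a serious obstacle: the argument is essentially bookkeeping combining the involutive structure of $\xi_{v,w}$, the absolute continuity already proved, and the Radon–Nikodym representation of $(\mu_\phi\circ\xi_{v,w})$ on $[w]$. The only point needing genuine care is the domain matching under the involution — one must evaluate the cocycle exponent at $y=\xi_{v,w}(x)\in[v]$ rather than at $x$ itself, and it is precisely this reindexing that converts the $[w]$-indexed derivative bound into the $[v]$-indexed supremum $C$.
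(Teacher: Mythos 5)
Your proposal is correct and follows essentially the same route as the paper: both use the involution to write $\mu_\phi([v])$ as the integral of the Radon--Nikodym derivative over $\xi_{v,w}([v])\subseteq[w]$, bound the integrand by the hypothesis, and reindex the cocycle exponent via $y=\xi_{v,w}(x)\in[v]$ to recognize the supremum $C$. If anything, your explicit handling of the reindexing step (evaluating the exponent at $y=\xi_{v,w}(x)$ rather than at $x$) is cleaner than the paper's notation at the corresponding point.
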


\begin{proof} First we notice that since $E_X(v) \subset E_X(w)$, then $[v] \subseteq \xi_{v,w}([w])$. Further, since $\xi_{v, w}$ is an involution we know 
$$\mu_\phi([v]) = \mu_\phi( \xi_{v, w}( \xi_{v, w}([v]))) =  \int_{x \in \xi_{v, w}([v])} \frac{d (\mu_\phi \circ \xi_{v, w} ) }{d \mu_\phi }(x) d\mu_\phi .$$
Since by assumption, for $\mu_\phi$-a.e. $x \in [w]$, 
$$\frac{ d (\mu \circ \xi_{v, w} ) }{d \mu}(x) \leq exp \left( \psi_\phi( x, \xi_{v, w}(x)) \right), $$
we know that 
$$\int_{x \in \xi_{v, w}([v])} \frac{d (\mu_\phi \circ \xi_{v, w} ) }{d \mu_\phi }(x) d\mu_\phi \leq \int_{x \in \xi_{v, w}([v])} exp \left( \psi_\phi( \xi_{v,w}(x), \xi_{v,w}(\xi_{v, w}(x))) \right) d\mu_\phi. $$
It follows immediately that 
$$\mu_\phi([v]) \leq \mu_\phi( \xi_{v, w}([v])) \cdot \sup_{y \in [v]} exp \left( \psi_\phi( \xi_{v,w}(x), x) \right). $$
Since $\xi_{v, w}([v]) \subset [w]$, we arrive at our desired conclusion.
\end{proof}

We will now note that as a corollary of Theorem \ref{homoclinic theorem} we can extend Theorem 3.1 and Corollary 3.2 of \cite{meyerovitch} to the countable amenable group subshift setting. We have become aware that Theorem B from \cite{barbieri-meyerovitch} extends this further to the countable sofic group subshift setting.

\begin{corollary} Let $X \subset \mathcal{A}^G$ be a subshift over a countable amenable group $G$, let $\phi \in SV(X)$ be a potential with summable variation, and let $\mu_\phi$ be an equilibrium state for $\phi$. Then $\mu_\phi$ is $(\mathfrak{T}_X^0, \psi_\phi)$-conformal. 
\end{corollary}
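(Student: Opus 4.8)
The plan is to deduce Corollary \ref{conformal cor} directly from Theorem \ref{homoclinic theorem} by upgrading the one-sided inequality on the Lebesgue-Radon-Nikodym derivative into the two-sided equality required by conformality. Recall that $\mu_\phi$ is $(\mathfrak{T}_X^0, \psi_\phi)$-conformal means that for every background-preserving \emph{homeomorphism} $f \in \mathcal{F}(X)$ we have $\frac{d(\mu_\phi \circ f)}{d\mu_\phi}(x) = \exp(\psi_\phi(x, f(x)))$ for $\mu_\phi$-a.e.\ $x$. The key structural observation is that the relevant class of maps here is restricted: if $f \in \mathcal{F}(X)$ is a homeomorphism, then whenever $f$ swaps a configuration $v$ for $w$ on some finite $F$, continuity forces $E_X(v) = E_X(w)$ (equal, not merely contained). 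So for the conformal statement we are always in the \emph{equal extender set} case, and it is this symmetry that will convert the inequality into an equality.

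First I would reduce to the generators. Since $\mathcal{F}(X)$ is generated (as a group) by the maps $\xi_{v,w}$ with $E_X(v) = E_X(w)$ (these are exactly the $\xi_{v,w}$ that happen to be homeomorphisms, by the discussion following Example \ref{sunnysideexample}), and since the Radon-Nikodym cocycle is multiplicative under composition while $\psi_\phi$ is additive (it is a cocycle on $\mathfrak{T}_X$), it suffices to verify the conformality equation for a single generator $f = \xi_{v,w}$ with $E_X(v) = E_X(w)$. I would also first confirm $\mathfrak{T}_X^0$-nonsingularity: this follows because each generator is a homeomorphism with $\mu_\phi \circ \xi_{v,w} \ll \mu_\phi$ (the absolute continuity coming from Theorem \ref{homoclinic theorem} applied in both directions), and nonsingularity for the whole group follows from that of the generators.

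Next I would apply Theorem \ref{homoclinic theorem} symmetrically. Since $E_X(v) = E_X(w)$ gives both $E_X(v) \subset E_X(w)$ and $E_X(w) \subset E_X(v)$, the theorem yields, for $\mu_\phi$-a.e.\ $x \in [w]$,
$$\frac{d(\mu_\phi \circ \xi_{v,w})}{d\mu_\phi}(x) \leq \exp\big(\psi_\phi(x, \xi_{v,w}(x))\big),$$
and, applying the theorem with the roles of $v$ and $w$ exchanged (noting $\xi_{w,v} = \xi_{v,w}$), for $\mu_\phi$-a.e.\ $y \in [v]$,
$$\frac{d(\mu_\phi \circ \xi_{v,w})}{d\mu_\phi}(y) \leq \exp\big(\psi_\phi(y, \xi_{v,w}(y))\big).$$
The derivative off $[v] \cup [w]$ equals $1 = \exp(0)$ since $\xi_{v,w}$ is the identity there and $\psi_\phi(x,x) = 0$, so the two displays cover $\mu_\phi$-a.e.\ $x$. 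To close the argument I would use that $\xi_{v,w}$ is an involution: the chain rule for Radon-Nikodym derivatives gives $\frac{d(\mu_\phi \circ \xi_{v,w})}{d\mu_\phi}(x) \cdot \frac{d(\mu_\phi \circ \xi_{v,w})}{d\mu_\phi}(\xi_{v,w}(x)) = 1$ a.e., while the cocycle identity gives $\psi_\phi(x, \xi_{v,w}(x)) + \psi_\phi(\xi_{v,w}(x), x) = \psi_\phi(x,x) = 0$. Feeding the two inequalities into this product relation forces both to be equalities a.e.

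The main obstacle I anticipate is the careful bookkeeping in this final involution step: I must ensure the chain rule is being applied to a genuine measure-isomorphism (which requires the established nonsingularity and the fact that $\xi_{v,w}$, being a homeomorphism in the equal-extender case, is a bona fide Borel automorphism), and I must verify that the a.e.\ sets from the two applications of Theorem \ref{homoclinic theorem} interact correctly under $\xi_{v,w}$ — that is, that $\xi_{v,w}$ maps the conull set on $[w]$ to a conull set on $[v]$, which again uses nonsingularity. Once the product of two quantities each bounded above by reciprocal exponentials is forced to equal $1$, equality in both factors is immediate, and reassembling the generators via multiplicativity of the derivative and additivity of $\psi_\phi$ completes the extension to all of $\mathcal{F}(X)$.
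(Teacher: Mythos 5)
Your proposal is correct, and it reaches the equality by a genuinely different mechanism than the paper. The paper's proof returns to Theorem \ref{sv theorem}: for $x \in [w]$ it sets $w_n = x_{F_n}$, $v_n = \xi_{v,w}(x)_{F_n}$, applies the theorem in both directions (since $E_X(v_n) = E_X(w_n)$) to squeeze the ratio $\mu_\phi([v_n])/\mu_\phi([w_n])$ between $\inf_{y \in [v_n]} \exp(-f(y))$ and $\sup_{y \in [v_n]} \exp(-f(y))$, and then uses continuity of $f$ (Observation \ref{f cont}) and martingale convergence of cylinder ratios to pass to the limit. You instead take the two one-sided bounds already packaged in Theorem \ref{homoclinic theorem} (applied once on $[w]$ and once with the roles of $v$ and $w$ exchanged on $[v]$) and force equality from the involution identity $\frac{d(\mu_\phi\circ\xi_{v,w})}{d\mu_\phi}(x)\cdot\frac{d(\mu_\phi\circ\xi_{v,w})}{d\mu_\phi}(\xi_{v,w}(x))=1$ together with the antisymmetry $\psi_\phi(x,\xi_{v,w}(x))+\psi_\phi(\xi_{v,w}(x),x)=0$. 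This is a purely formal, measure-theoretic upgrade that avoids re-invoking the cylinder approximation and the continuity of the cocycle, and it isolates exactly why equal extender sets yield equality; the paper's route is more self-contained at the point of use but repeats work already done in proving Theorem \ref{homoclinic theorem}. The points you flag as needing care are the right ones and do go through: mutual absolute continuity (hence validity of the chain rule) follows from one-sided nonsingularity plus the involution property, the fixed-point set of $\xi_{v,w}$ is Borel and the derivative is trivially $1 = \exp(\psi_\phi(x,x))$ there, and the reduction to generators via multiplicativity of the derivative and additivity of $\psi_\phi$ is exactly what the paper defers to the techniques of Meyerovitch.
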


\begin{proof} First note, as observed in \cite{meyerovitch}, the collection $\{ \xi_{v, w} :  E_X(v) = E_X(w) \}$ generates $\mathcal{F}(X)$. By application of the results in \cite{meyerovitch}, it is therefore sufficient to show that for any $v, w \in L_F(X)$ with $E_X(v) = E_X(w)$, we have for all $x \in X$ 
$$\frac{ d\mu \circ \xi_{v, w}}{d \mu}(x) = exp \left( \psi_\phi( x, \xi_{v, w}(x)) \right) . $$

Let $v, w \in L_F(X)$ such that $E_X(v) = E_X(w)$. Let $x \in [w]$ and for all $n \in \N$, define $v_n, w_n \in L_{F_n}(X)$ as in the proof of Theorem \ref{homoclinic theorem}. By application of Theorem \ref{sv theorem} we can see that 
$$\inf_{y \in [v_n]} exp  \left( - f(y) \right)  \leq \frac{\mu_\phi([v_n])}{\mu_\phi([w_n])} \leq \sup_{y \in [v_n]} exp  \left( - f(y) \right). $$

By taking limits we can conclude that for all $x \in [w]$, 
$$\frac{ d\mu \circ \xi_{v, w}}{d \mu}(x) = exp \left( \psi_\phi( x, \xi_{v, w}(x)) \right) . $$
The proof is similar for $x \in [v]$, and for all $x \in X \backslash ([v] \cup [w])$ we know 
$$\frac{ d\mu \circ \xi_{v, w}}{d \mu}(x) = exp \left( \psi_\phi( x, \xi_{v, w}(x)) \right) = 1.$$
Again, by following the techniques in \cite{meyerovitch} it follows that $\mu_\phi$ is $(\mathfrak{T}_X^0, \psi_\phi)$-conformal. 
\end{proof}

%

\subsection*{Disclosures} The author has no conflicts of interest to declare that are relevant to the content of this article. No funding was received to assist with the preparation of this manuscript.

\bibliographystyle{plain}
\bibliography{mybib}

\end{document}